\numberwithin{equation}{section}
\renewcommand\d{\partial}
\def\g{\gamma}
\def\l{\lambda}
\def\eps{\varepsilon }
\renewcommand\d{\partial}
\newcommand\R{\mathbb R}
\def\g{\gamma}
\def\eps{\varepsilon}
\def\l{\lambda}
\newcommand\errfn{\textrm{errfn}}
\newcommand\br{\begin{remark}}
\newcommand\er{\end{remark}}
\newcommand\bp{\begin{pmatrix}}
\newcommand\ep{\end{pmatrix}}
\newcommand{\be}{\begin{equation}}
\newcommand{\ee}{\end{equation}}
\newcommand\ba{\begin{equation}\begin{aligned}}
\newcommand\ea{\end{aligned}\end{equation}}
\newcommand{\bap}{\begin{app}}
\newcommand{\eap}{\end{app}}
\newcommand{\begs}{\begin{exams}}
\newcommand{\eegs}{\end{exams}}
\newcommand{\beg}{\begin{example}}
\newcommand{\eeg}{\end{exaplem}}
\newcommand{\bpr}{\begin{proposition}}
\newcommand{\epr}{\end{proposition}}
\newcommand{\bt}{\begin{theorem}}
\newcommand{\et}{\end{theorem}}
\newcommand{\bc}{\begin{corollary}}
\newcommand{\ec}{\end{corollary}}
\newcommand{\bl}{\begin{lemma}}
\newcommand{\el}{\end{lemma}}
\newcommand{\bd}{\begin{definition}}
\newcommand{\ed}{\end{definition}}
\newcommand{\brs}{\begin{remarks}}
\newcommand{\ers}{\end{remarks}}
\newcommand{\B }{\mathcal{B}}
\newcommand{\CalF}{\mathcal{F}}
\newcommand{\CalN}{\mathcal{N}}
\newcommand{\N}{\mathcal{N}}
\newcommand{\RR}{{\mathbb R}}
\newcommand{\const}{\text{\rm constant}}
\newcommand{\Id}{{\rm Id }}
\newcommand{\diag}{{\rm diag }}
\newcommand{\Span}{{\rm Span }}
\newtheorem{theorem}{Theorem}[section]
\newtheorem{proposition}[theorem]{Proposition}
\newtheorem{corollary}[theorem]{Corollary}
\newtheorem{lemma}[theorem]{Lemma}
\theoremstyle{remark}
\newtheorem{remark}[theorem]{Remark}
\theoremstyle{definition}
\newtheorem{definition}[theorem]{Definition}
\newtheorem{example}[theorem]{Example}
\newcommand{\RM}{\mathbb{R}}
\newcommand{\ZM}{\mathbb{Z}}
\newcommand{\CM}{\mathbb{C}}
\newcommand\cS{{\mathcal S}}
\newcommand\cR{{\mathcal R}}
\newcommand\cQ{{\mathcal Q}}
\newcommand\cO{{\mathcal O}}
\newcommand\cN{{\mathcal N}}
\newcommand\cI{{\mathcal I}}
\newcommand\cM{{\mathcal M}}
\newcommand{\f}{\frac}
\newcommand{\beq}{\begin{equation}}
\newcommand{\eeq}{\end{equation}}
\newcommand{\ks}{\bar k}
\newcommand{\Ms}{\bar M}
\newcommand{\cs}{\bar c}
\newcommand{\qs}{\bar q}
\title{
Behavior of periodic solutions of viscous conservation
laws under Localized and nonlocalized perturbations
}
\author{ Mathew A. Johnson}
\address{University of Kansas, Lawrence, KS 66045}
\email{matjohn@math.ku.edu}
\thanks{{\it Mathew A. Johnson}, University of Kansas, Lawrence, KS 66045, matjohn@math.ku.edu\\
Research of M.J. was partially supported under NSF grant no. DMS-1211183 and by the University of Kansas General Research Fund allocation 2302278.}
\author{Pascal Noble}
\address{Universit\'e Lyon I, Villeurbanne, France}
\email{noble@math.univ-lyon1.fr}
\thanks{{\it Pascal Noble}, Universit\'e Lyon I, Villeurbanne, France, noble@math.univ-lyon1.fr\\
Research of P.N. was partially supported by the French ANR Project no.
ANR-09-JCJC-0103-01}
\author{L.Miguel Rodrigues}
\address{Universit\'e Lyon 1, Villeurbanne, France}
\email{ rodrigues@math.univ-lyon1.fr}
\thanks{{\it L.Miguel Rodrigues}, Universit\'e Lyon 1, Villeurbanne, France, rodrigues@math.univ-lyon1.fr \\
Stay of M.R. in Bloomington was supported by
French ANR project no. ANR-09-JCJC-0103-01}
\author{Kevin Zumbrun}
\address{Indiana University, Bloomington, IN 47405}
\email{kzumbrun@indiana.edu}
\thanks{{\it Kevin Zumbrun}, Indiana University, Bloomington, IN 47405, kzumbrun@indiana.edu\\
Research of K.Z. was partially supported
under NSF grant no. DMS-0300487}
\begin{document}

\begin{abstract}
We establish nonlinear stability and asymptotic behavior of
traveling periodic waves of viscous conservation laws
under localized perturbations or
nonlocalized perturbations asymptotic to constant shifts in phase,
showing that long-time behavior is governed by an associated
second-order formal Whitham modulation system.
A key point is to identify the way in which initial perturbations
translate to initial data for this formal system,
a task accomplished by detailed estimates on the linearized solution operator 
about the background wave.
Notably, our approach gives both a common theoretical treatment and a 
complete classification in terms of ``phase-coupling'' or ``-decoupling''
of general systems of conservation or balance laws, encompassing cases
that had previously been studied separately or not at all.
At the same time, our refined description of solutions gives the new
result of nonlinear asymptotic stability with respect to localized
perturbations in the phase-decoupled case, further distinguishing
behavior in the different cases.
An interesting technical aspect of our analysis
is that for systems of conservation laws
the Whitham modulation description is
of system rather than scalar form,
as a consequence of which renormalization methods 
such as have been used to treat the reaction-diffusion case 
in general do not seem to apply.
\end{abstract}

\date{\today}
\maketitle

\begin{center}
{\it Keywords}: periodic traveling waves; balance and conservation laws; asymptotic stability.
\end{center}

\begin{center}
{\it 2010 MSC}:  35B40, 35B10, 35B35, 35L65.
\end{center}

\tableofcontents

\section{Introduction}\label{s:introduction}

One of the triumphs in recent years in the dynamical study of
partial differential equations (PDE) 
has been the development of a rigorous theory of
modulation of periodic traveling waves in optics,
pattern-formation, and other equations, 
both illuminating and expanding on formal predictions
made by WKB-type expansion much earlier on, as for example in \cite{W,FST}.
Among many other results, we mention in particular the resolution 
in \cite{S1,S2,S3} using Bloch transform/renormalization techniques
of the then 30-year open problem of stability of periodic 
reaction-diffusion waves 
with respect to localized perturbations\footnote{
Verifying formal predictions and rigorous spectral descriptions of \cite{Eck}.}
and, under nonlocalized perturbations, the rigorous verification 
in \cite{DSSS} using related techniques
of the associated second order (``diffusive'')
formal WKB expansion in various settings,
in particular in the small-wavelength limit.
Most recently, 
the WKB expansion has been verified 
for solutions of reaction-diffusion equations in the long-time limit,
in \cite{SSSU} by methods related to those of \cite{S1,S2,S3,DSSS} and 
in \cite{JNRZ1,JNRZ2} by rather different techniques originating
from the study of conservation laws \cite{JZ2,JZ3,Z4,Z5}.

From these analyses emerges the clear picture of asymptotic behavior
as dominated by a single critical mode of the linearized equations,
corresponding to translational invariance of the underlying equations,
that is governed approximately by 
the phase equation of the formal WKB approximation:
(the integral of) a scalar convected Burgers equation.
However, there are many physically interesting applications to
which this well-developed theory does not apply.
Specifically, when there exist conserved quantities, whether deriving
from Hamiltonian structure/symmetries of the equations,\footnote{
As for example for the Korteweg--de Vries (KdV) equation \cite{W,Se,JZ4,JZB} 
or Euler-Korteweg system \cite{BNR}.}
or, as in the case of parabolic conservation laws considered
here, simply from divergence form of the equations/conservation of mass,
then there exist additional critical modes, and the formal WKB
prediction becomes that of a more complicated {\it hyperbolic--parabolic
system of conservation laws} rather than the scalar convected Burgers
equation of the reaction-diffusion case.

Perhaps the best-known example of such a model is the Kuramoto--Sivashinsky
equation, for which the formal asymptotic description of behavior via a 
hyperbolic--parabolic system of
conservation laws was pointed out
already in \cite{FST} under the alternative form of a damped scalar 
wave equation (the ``viscoelastic behavior'' of the title).
Further examples 
arise in the modeling of
viscoelasticity with strain-gradient effects, inclined thin-film flow, and
B\'enard--Marangoni or surfactant-driven Marangoni flow; 
see Section \ref{s:examples} and
Appendix \ref{s:gen}.

Despite the physical motivation coming from such examples, until very recently
there was no rigorous analysis of nonlinear stability or behavior in
this (system) case.
Indeed, as discussed in Remark \ref{slowmod_Bloch_rmk}, the renormalization
techniques of the asymptotically scalar
reaction-diffusion case in the presence of multiple 
characteristic speeds (linear group velocities) appear to break down.
Using a technically rather different set of techniques, nonlinear stability 
under localized perturbations
has now been shown for such systems in 
\cite{OZ4}\footnote{
Concerning the more tractable (since faster-decaying) 
three and higher dimensional case.
}
\cite{JZ2,JZ3,JZN,BJNRZ1,BJNRZ2}
in great generality, in particular resolving the 
longstanding open problem of nonlinear stability of spectrally stable
Kuramoto--Sivashinsky waves, dating
back to the numerical confirmation in \cite{FST} of existence
of bands in parameter space of spectrally stable waves.
However, up to now, asymptotic behavior has not been determined 
in this more complicated, system, case even for localized perturbations.

More, as discussed in \cite{JZ2,JZ3}, 
there 
was 
some question in this case precisely what behavior 
one
might expect.
Specifically, one-dimensional
nonlinear modulational stability under localized perturbations
of spectrally stable periodic traveling wave solutions of
viscous conservation laws was shown
in \cite{JZ2} and \cite{JZ3} in two different cases,
depending roughly on whether or
not the wave speed is stationary to first order along the manifold of nearby
periodic solutions.
These two analyses were motivated by a common connection
observed by Serre \cite{Se} to an associated formal Whitham averaged system
obtained by WKB approximation.
However, despite this shared
heuristic description, the authors observed
some puzzling asymmetries in the results obtained; see, for example,
the discussion in \cite[Section 1.3]{JZ3} 
on the varying linearized and nonlinear decay rates obtained in these
different cases under localized perturbations.
In particular, the nonlinear decay rate obtained for localized
perturbations in the stationary case
was slower than what might be guessed from the formal Whitham approximation
with zero initial phase modulation; however, it was left as an open problem
whether this intuitive initialization was correct, or whether
localized initial
perturbations could excite the phase mode through nonlinear
interaction in some way.

Here, we sharpen and extend these previous results (\cite{JZ2,JZ3})
in several ways,
in particular allowing more general, nonlocalized, perturbations
and rigorously identifying time-asymptotic behavior as agreeing to
leading order with the solution of the formal Whitham system with
appropriately prescribed initial data. 
Our analysis loosely follows, 
and also greatly extends, the approach of \cite{JNRZ1,JNRZ2} 
in the reaction-diffusion case;
as noted earlier, we do not see a way to apply here the 
more familiar techniques of \cite{S1,S2,S3,DSSS,SSSU}.
In the process, we explain the asymmetries observed in \cite{JZ2,JZ3}
as connected with the different ways that initial data 
align with characteristic modes for the common Whitham system governing large-time asymptotics.

A striking consequence of our results is that spectrally stable waves about which wave
speed depends to first order on wave number alone are  not only boundedly nonlinearly stable, but {\it asymptotically stable} with respect to localized perturbations. 
This resolves a question brought up early on in \cite{OZ1,OZ2,Se}
that was left open in the analysis of \cite{JZ2,JZ3}.
On the other hand, with respect to nonlocalized perturbations, waves in the two different cases behave essentially alike.

To put things another way, we show that the case that the part
of the Whitham system corresponding to phase perturbations
decouples from the rest of the Whitham equations yields decay rates 
exactly corresponding to those of the 
(scalar\footnote{See Remark \ref{rd_to_scalar} below.}
Whitham equation) 
reaction-diffusion case, both for localized and nonlocalized perturbations.
Indeed, we find that reaction-diffusion and conservation laws
can be put in a common framework 
$$
u_t+f(u)_x+g(u)=(B(u)u_x)_x, 
\quad u\in \RM^n,
$$
consisting of a continuum of models,
with $f\equiv 0$ corresponding to the reaction-diffusion case and
$g\equiv 0$ to the conservative case, for which a complete
classification of behavior can be obtained.

This analysis puts the conservative theory now on a par with that of 
the reaction-diffusion case, at least as far as time-asymptotic
stability and asymptotic behavior.  It is an interesting open problem to
reproduce in the conservative case a small-wavelength description as
obtained in \cite{DSSS} for the reaction-diffusions case.
See \cite{NR1,NR2} for some preliminary results in this direction.

\subsection{Slow modulation behavior}
We begin by emphasizing some insights gained from the WKB approximation process,
which requires, first, a description of nearby periodic traveling waves. 
For definiteness/clarity of exposition, we restrict to the simplest case of a semilinear second-order parabolic system of conservation laws.
However, our analysis extends with little change to the quasilinear 
$2r$-parabolic or (under appropriate structural conditions as in
\cite{Ka,Z2,Z4}) the symmetrizable hyperbolic-parabolic case; 
see \cite{BJNRZ1,BJNRZ2,BJNRZ3,JZN} for related analyses in these
and more general situations. Examples include periodic solutions of the equations of one-dimensional viscoelasticity with 
strain-gradient effects \cite{OZ1,BLeZ,Y} and of the Kuramoto--Sivashinsky equations \cite{K,S,FST} and Saint-Venant equations \cite{D,BM,N1,N2} modeling inclined thin film flow. We discuss in Appendix \ref{s:gen} the changes needed to handle these interesting physical applications.

Consider a periodic traveling-wave solution of a parabolic or 
``viscous'' system of conservation laws
$$
u_t+ f(u)_x =u_{xx} ,
$$
$u,f$ valued in $\RM^n$, $x,t\in \RM$, or, equivalently, a standing-wave solution $u(x,t)=\bar U(x)$ of
\be\label{cons}
u_t+ \ks(f(u)_x-\cs u_x) =\ks^2 u_{xx},
\ee
where $\cs$ is the speed of the original traveling wave, and 
the wave number $\ks$ is chosen so that
\be\label{per}
\bar U(x+1)=\bar U(x).
\ee

Integrating the traveling-wave equation $\ks \bar U''=f(\bar U)'-\cs\,\bar U'$ obtained by substituting $u(x,t)=\bar U(x)$ in \eqref{cons}, we obtain
\be\label{ode}
\ks \bar U' = f(\bar U)-\cs\,\bar U+\qs,
\ee
where $\qs\in \RM^n$ is a constant of motion.
Setting $\bar U_0:=\bar U(0)$, we have evidently $(2n+2)$ parameters $(\ks,\cs,\bar U_0,\qs)$ determining candidates for
periodic solutions, and $n$ constraints $\bar U(1)=\bar U_0$,
suggesting, in the absence of additional special structure\footnote{
For example, Hamiltonian structure or existence of additional
conserved quantities other than $\bar q$ 
\cite{Se,JZ3,BNR}.}
that the set of nearby periodic solutions form a manifold of dimension $n+2$.
Denoting by $\Ms:=\int_0^1 \bar U(x)dx$ the mean of $\bar U$,
we make the genericity assumptions:
\begin{enumerate}
  \item[\bf{(H1)}] $f\in C^K(\RM^n)$ for some
$K\geq4$.
  \item[\bf{(H2)}]  Up to translation, the set of $1$-periodic solutions of \eqref{cons} (with $M,k$ replacing $\Ms,\ks$) in the vicinity of $\bar U$, $M=\Ms$, $k=\ks$, forms a smooth $(n+1)$-dimensional manifold
\be\label{manparam}
\left\{\ (U(M,k;\cdot),c(M,k))\ \middle|\ (M,k)\in\Omega\ \right\}=\left\{\ (U^{M,k}(\cdot),c(M,k))\ \middle|\ (M,k)\in\Omega\ \right\}
\ee
where $\Omega$ is some open subspace of $\R^{n+1}$ containing $(\Ms,\ks)$ and the role of $M$ is defined implicitly by
\be\label{implicitM}
 M:= \int_0^1 U^{M,k}(x)dx.
\ee
\end{enumerate}

Then, the formal approximate solution of $u_t+\ks(f(u)_x-\cs u_x)-\ks^2 u_{xx}=0$
obtained by a nonlinear WKB expansion, as derived to varying orders
of accuracy in \cite{Se,OZ3,NR1,NR2}, is
\be\label{wref}
u(x,t)\approx U^{(\cM,\kappa)(x,t)}(\Psi(x,t)),
\ee
where the mean $\cM$ and wave number $\kappa:=\ks\Psi_x$ satisfy the
Whitham equations
\ba\label{whitham}
\cM_t+\ks(F-\cs \cM)_x &=\ks^2(d_{11} \cM_x + d_{12}\kappa_x)_x,\\
\kappa_t+\ks(-\omega-\cs\kappa)_x&=\ks^2(d_{21}\cM_x
+ d_{22}\kappa_x)_x,\\
\ea
an enlarged, $(n+1)\times (n+1)$ system of viscous conservation laws,
where $\omega(\cM,\kappa)=-\kappa c(\cM,\kappa)$ denotes time frequency,
$F(\cM,\kappa):=\int_0^1f(U^{\cM,\kappa}(x))dx$ mean flux,
and $d_{ij}(\cM,\kappa)$ are determined by higher-order corrections
as described in \cite{NR1,NR2}.
For convenience of the reader,
we recall these derivations in Appendix \ref{W_app}.
The phase $\Psi$ may be recovered through the solution of
\be\label{psieq}
\Psi_t= \omega(\cM,\kappa ) +\cs\,\kappa
+\ks d_{21}(\cM,\kappa)\cM_x +\ks d_{22}(\cM,\kappa)\kappa_x.
\ee

\br\label{genrmk}
\textup{
Assumption (H2), corresponding to {\it evolutionarity} of \eqref{whitham} considered as an equation on the manifold of periodic solutions, is necessary for spectral stability in the sense usually defined;
specifically, as described in Lemma \ref{DHlem}, it is implied by condition (D3) below.
Thus, there is no loss of generality, and considerable gain in clarity, in assuming (H2) from the outset as we do here.
}
\er

\br\label{rd_to_scalar}
\textup{Since our analyis is built to deal with nondegenerate cases, when we 
treat
systems where bulk forces are incorporated in all the equations as in reaction-diffusion or convection-reaction-diffusion systems 
in nondivergence form, 
a similar count of dimensions leads us to assume that, up to translation, the set of $1$-periodic solutions forms a smooth $1$-dimensional manifold
$$
\left\{\ (U(k;\cdot),c(k))\ \middle|\ k\in\Omega\ \right\}=\left\{\ (U^{k}(\cdot),c(k))\ \middle|\ k\in\Omega\ \right\}
$$
where $\Omega$ is some open interval,
with no constant of integration, hence no
additional parameters $\mathcal{M}$ involved.
 The analog of \eqref{whitham} is then a scalar equation for the evolution of local wavenumber $\kappa$.}
\er

Three kinds of modulation are involved in \eqref{wref}: 
modulation in phase, wave number and mean.
However, a prominent role is played by modulation in phase. 
Indeed, it is a familiar scenario that stability of patterns 
involve description of the evolution of 
various modulation parameters, and that among them the major role is 
devoted to parameters 
determining spatial positions.
Note however that whereas for patterns whose variation is essentially 
localized in space, such as fronts, solitons or multi-solitons, kinks, shocks,
etc.,
there are a finite number of parameters to follow,
with their evolution described by a system of ordinary differential 
equations,
here, in the periodic setting, 
there is a
continuous description involving function-valued modulation parameters whose evolution obeys a partial differential system (here \eqref{whitham}), the 
reduction being not from continuous to discrete dynamics
but from dynamics about periodic solutions to dynamics about constants:
an averaging process.
Note also that the special role of phase is already encoded in the formal description \eqref{wref}-\eqref{whitham} 
since the parabolic nature of \eqref{whitham} hints at $(\cM-\Ms,\kappa-\ks)=(\cM-\Ms,\ks\d_x(\Psi-\Id))<<\Psi-\Id$.

This explains why, for 
the purposes of the stability analyses of \cite{JZ2,JZ3},
it was sufficient to retain from \eqref{wref}-\eqref{whitham} only 
the coarser approximation
\be\label{wrefpoor}
u(x,t)\sim U^{(\Ms,\ks)}(\Psi(x,t))
\ee
neglecting all but phase modulations.
Yet in doing so one gives up any hope to describe the precise behavior of the phase $\Psi$ appearing in \eqref{wrefpoor} since this would require a full modulation approximation and in particular knowledge of $\cM$. Without this precise description of the phase, decay rates obtained for $\Psi-\Id$ and its derivatives may indeed seem mysterious.

In contrast, let us explain what may be guessed from \eqref{wref}-\eqref{whitham} 
about behavior under localized perturbations, that is when initially $\kappa_0-\ks$ is mean-free. To which extent this simplification will lead to higher order decay rates for $\Psi-\Id$ or $\d_x(\Psi-\Id)$ (directly needed to analyze \eqref{wrefpoor}) is of course related to whether at some order the hyperbolic part of the equation for $\kappa_t$ uncouples from the full system \eqref{whitham}. In particular, if 
to second order $\omega$ is independent of $\cM$ then one recovers for the phase higher decay rates corresponding to simpler systems for which there is no extra parameter $\cM$ and \eqref{whitham} is reduced to a scalar conservation law, typical examples being reaction-diffusion systems treated in \cite{JNRZ1,JNRZ2,SSSU};
in other words decay rates for wave number perturbation are those of a solution of a viscous Burgers equation with mean-free initial datum. 
If, on the other hand,
$\omega$ is independent of $\cM$ only up to linear order, then intermediate decay rates are 
obtained,\footnote{
These
intermediate decay rates require some assumptions about characteristics speeds
provided by assumption (H3) below; see Remark \ref{no_interaction}.}
slower than those for reaction-diffusion systems but faster than those for the general situation when no uncoupling is present or when the wave undergoes a nonlocalized perturbation. 

The latter observations, to be established rigorously in the following, 
were not only inaccessible to proof but, as discussed in 
\cite[``Discussion and open problems'']{JZ2},
actually undecidable from the point of view of the lower-order description 
\eqref{wrefpoor} of \cite{JZ2,JZ3}. Indeed, translated into the present terminology,
the question posed in \cite{JZ2} which of $\Psi$, $\Psi_x$ is the primary variable
(with respect to true behavior) is essentially 
the question whether a localized initial perturbation
induces nontrivial data for $k$ in \eqref{whitham}, the answer to
which is a key step in our analysis, and a rather technical one.

With this is mind, our strategy will be first to validate the scenario 
\eqref{wref}-\eqref{whitham}, then to derive some consequences from the 
analysis of \eqref{whitham}. 
But we first need to give precise definitions
of terms such as ``spectrally stable'' 
((D1)-(D3) below), ``localized perturbation,'' and ``linearly uncoupled.''

\subsection{Setting and preliminary observations}

Linearizing \eqref{cons} about $\bar U$ yields the periodic coefficient
equation
\be\label{lin}
(\d_t-L)v=0,\qquad Lv:= (\ks^2\partial_x^2 +\ks\cs\partial_x -\ks\partial_x A)v,
\qquad
A(x):=df(\bar U(x)),
\ee
where here $L$ is considered as a closed operator acting on $L^2(\RM;\RM^n)$ with 
densely defined domain $H^2(\RM;\RM^n)$.\footnote{Henceforth, in our notation for Lebesgue and Sobolev spaces
we will suppress the definition of the range; in particular, we will write $L^p(\RM)$ for the equivalence class
of $p$-integrable $\RM^n$-valued functions $L^p(\RM;\RM^n)$.} 
Introducing the family of operator-valued symbols
$$
L_\xi:= e^{-i\xi\,\cdot} L e^{i\xi\,\cdot}=
\ks^2(\partial_x +i\xi)^2 +\ks(\partial_x+i\xi)(\cs - A),
\quad \xi\in[-\pi,\pi],
$$
operating on periodic functions on $[0,1]$, determined by the defining relation
\be\label{defrel}
L (e^{i\xi\,\cdot}f)= e^{i\xi\,\cdot} (L_\xi f)
\quad \hbox{\rm  for $f\in H^2_{\rm per}([0,1])$},
\ee
we define following \cite{S1,S2,JZ2,JZ3,JZN,BJNRZ2}
the {\it diffusive spectral stability} conditions:
\begin{enumerate}
  \item[\bf{(D1)}]
$\sigma(L)\subset\{\lambda\ |\ \Re \lambda<0\}\cup\{0\}$.
  \item[\bf{(D2)}]
There exists a $\theta>0$ such that for all $\xi\in[-\pi,\pi]$ we have
$\sigma(L_{\xi})\subset\{\lambda\ |\ \Re \lambda\leq-\theta|\xi|^2\}$.
  \item[\bf{(D3)}]
$\lambda=0$ is an eigenvalue of $L_0$ with generalized eigenspace
$\Sigma_0$ of dimension $n+1$.
\end{enumerate}

\br\label{specremarks}\textup{
As the coefficients of $L$ are 1-periodic, Floquet theory implies that the spectrum of $L$
considered as an operator on $L^2(\RM)$ is purely continuous, and that $\lambda\in\sigma(L)$
if and only if the spectral problem $Lv=\lambda v$ has an $L^\infty(\RM)$ eigenfunction of the form
$v(x;\lambda,\xi)=e^{i\xi x}w(x;\lambda,\xi)$
for some $\xi\in[-\pi,\pi]$ and $w(\cdot;\lambda,\xi)\in L^2_{\rm per}([0,1])$;
that is, that
$$
\sigma_{L^2(\RM)}\left(L\right)=\bigcup_{\xi\in[-\pi,\pi]}\sigma_{L^2_{\rm per}([0,1])}\left(L_\xi\right).
$$
See \cite{G} for more details. 
In particular, since the spectrum of a given operator $L_\xi$ is purely discrete, 
consisting of isolated eigenvalues of finite multiplicity which, furthermore, depend continuously on
$\xi$, this provides a discrete parameterization of the essential spectrum of $L$. 
}

\textup{
Applying standard spectral
perturbation theory \cite{K} to the operators $L_\xi$, we obtain from (D3) that there exists for $\xi$ sufficiently
small an invariant $(n+1)$-dimensional subspace $\Sigma_\xi$ of $L_\xi$ and associated total eigenprojection $\Pi(\xi)$ bifurcating analytically from $\Sigma_0$ and its associated eigenprojection $\Pi_0$, with all other eigenvalues of $L_\xi$ having real part uniformly bounded above by some negative constant\footnote{Mark that the important property we have used through these arguments and obtained from the introduction of Bloch symbols $L_\xi$ is compactness, which plays for this periodic setting the role of the finite dimensionality that one obtains with Fourier symbols associated to constant-coefficient operators.}.
}
\er

\br\label{Drmks}
\textup{
Variations $\partial_M U_{|(\Ms,\ks)}$, $\bar U'$
along the manifold of nearby periodic solutions lie always in $\Sigma_0$
(see proof of Lemma \ref{jordanlem}), accounting for $n+1$ dimensions,
whereas $\partial_k U_{|(\Ms,\ks)}$
usually does not. Thus, (D3) is an assumption of minimal dimension, corresponding also to
the assumption that there are no neutral modes of $L_0$ other than those 
accounted for by modulation along the ``slow manifold'' $U^{(M,k)}(\,\cdot\,+\beta)$ as in \eqref{wref}.
Assumption (D2) may be recognized as an ``asymptotic parabolicity''
assumption encoding time-asymptotic diffusion
comparable to that of a second-order heat equation; it is directly related to the parabolicity of system \eqref{whitham}.
Assumption (D1) encodes that the spectrum corresponding to marginal stability is minimal, thus 
confined to $\{0\}$.
}
\er

The following observation 
hints what may be gained at the linear level 
from ``linear uncoupling'' $\d_M c|_(\Ms,\ks) \ne 0$, 
at the same time relating $\Sigma_0$ explicitly to variations along the manifold of periodic traveling waves nearby $\bar U$.
Note \cite{Se}, that this condition corresponds with decoupling
of the $\kappa$ equation in the first-order 
part of the linearization about $(\Ms,\ks)$ of the 
Whitham system \eqref{whitham}. 

\bl\label{jordanlem}
Assuming (H1)--(H2) and (D3),
$L_0$ has a nontrivial Jordan block at $\lambda=0$
if and only if $\d_M c \ne 0$ at $(\Ms,\ks)$, 
or equivalently $\d_M \omega (\Ms,\ks)\ne 0$ in \eqref{whitham}, 
in which case there is a single Jordan chain of height two
ascending from the genuine right eigenfunction $\bar U'$.
In either case,
$$
\Sigma_0=\Span \{\partial_M\bar U,\bar U'\}.
$$
\el

\begin{proof}
Variations $\partial_M \bar U$ comprise an $n$-dimensional subspace
of solutions of
\[
L_0\partial_M \bar U=-\ks(\partial_M c|_{(\Ms,\ks)})\bar U',
\]
i.e.,
either the eigenvalue or generalized eigenvalue equation at $\lambda=0$,
complementary to genuine eigenfunction $\bar U'$
(mean zero, hence independent of $\partial_M\bar U$ by
$\int_0^1 \partial_M \bar U(x) dx =\Id$, a consequence of \eqref{implicitM}).
Comparing dimensions, we thus have $\Sigma_0=\Span \{\partial_M\bar U, \bar U'\}$.
If $\partial_M c|_{(\Ms,\ks)}=0$, then $\Span \{\partial_M\bar U\}$, hence also $\Sigma_0$,
consists entirely of genuine eigenfunctions. If, on the other hand, there is a direction $\nu$ in which $\partial_M c|_{(\Ms,\ks)}\cdot \nu \ne 0$, then there is a single generalized eigendirection
$\Span \{\partial_M \bar U\cdot \nu\}$ over $\bar U'$.
\end{proof}

The following lemma justifies the apparently special assumption (H2).

\bl\label{DHlem}
Assuming (H1), (D3)
implies 
(H2).
\el

\begin{proof}
Observe that variations with respect to $(U_0, q)$
in \eqref{ode} satisfy the eigenvalue ODE for $L_0$
while variations in $c$ satisfy the generalized eigenvalue
equation associated with genuine eigenfunction $\bar U'$,
hence the subspace of all elements of their linear span satisfying the
constraint of periodicity is contained in $\Sigma_0$.
Condition (D3) implies that this subspace is dimension $\le n+1$,
from which we may deduce that the $n$-dimensional
periodicity condition $U(1)=U_0$ is full rank at the values $(\ks, \cs,\bar U_0,\qs)$ corresponding to $\bar U$, as the kernel with respect to the $2n+1$ parameters $(c,U_0,q)$ is dimension $\le n+1$.
This guarantees existence of a smooth parametrization
$U(\alpha,\beta;\cdot)=U^{\alpha}(\cdot+\beta)$, with $(\alpha,\beta)$ lying in some open set of $\RM^n\times\RM$, of the manifold of nearby
periodic solutions, whereupon we may conclude using (D3) again that
$\det\d_\alpha(M,k)_{|(\bar\alpha,\bar\beta)}\neq0$ 
by \cite[Theorem 1.3]{OZ3}.\footnote{
A periodic Evans function computation showing that
the $(n+1)$st derivative of the Evans function at $\lambda=0$
is proportional to $\det\d_\alpha(M,k)$,
hence $\det\d_\alpha(M,k)_{|(\bar\alpha,\bar\beta)}\neq0$
is necessary for (D3) (implicit also in the earlier
work \cite{Se}).}
Thus 
$(\alpha,\beta)\mapsto (M,k,\beta)$ is locally invertible, yielding a smooth parametrization by $(M,k,\beta)$.
\end{proof}

We complete our set of assumptions with a final nondegeneracy condition corresponding to {\it strict hyperbolicity} at $(\Ms,\ks)$ of the first-order part of \eqref{whitham},
namely, the assumption:
\begin{enumerate}
  \item[\bf{(H3)}] the eigenvalues $a_j$ of $\frac{\partial (F-\cs M,-\omega-\cs k)}{\partial (M,k)}|_{(\Ms,\ks)}$ are distinct.
\end{enumerate}

The role of this assumption is made clearer by the following connections established at the linear spectral level in \cite{Se,OZ3} between the Whitham system \eqref{whitham} and long-time ($\sim$ low-frequency for parameters, 
$\sim$ low-Floquet exponent $\xi$ for original functions) behavior.

\bpr[\cite{OZ1,Se}]\label{whiteig}
Assuming (H1)-(H2), the $(n+1)$-multiplicity eigenvalue $\lambda=0$ of $L_\xi$ at $\xi=0$ bifurcates in a differentiable way for $\xi\ne 0$ sufficiently small into $n+1$ eigenvalues
\be\label{as}
\lambda_j(\xi)=-i\ks\xi a_j + o(\xi),
\quad j=1,\dots, n+1,
\ee
where $a_j$ are the eigenvalues of  $\d_{(M,k)} (F-\cs M,-\omega-\cs k)|_{(\Ms,\ks)}$, that is, $\ks a_j$ are the characteristic velocities of the first-order part of the Whitham modulation equations \eqref{whitham} at the values $(\Ms,\ks)$ associated with $\bar U$. Moreover, assuming (H1)--(H3), this bifurcation is analytic.
\epr

Proposition \ref{whiteig} was established in \cite{Se,OZ3} 
using direct Evans function calculations. 
In Section~\ref{s:genprep}, we provide an alternative
proof based on direct spectral perturbation expansion
(as in \cite{NR1,NR2}) which is better suited
to the techniques utilized in our analysis, 
and yields also information about eigenprojections.
In the meantime, we observe the following interesting corollary.

\bc\label{DHcor}
Assuming (H1)-(H2), $\sigma(L)\subset\{\lambda\,|\,\Re\lambda\leq0\}$ implies that characteristics $a_j$ are real.
That is, weak hyperbolicity ($a_j$ real)\footnote{
Full hyperbolicity requiring of course also semisimplicity of 
$a_j$ as eigenvalues of
$\frac{\partial (F-\cs M,-\omega-\cs k)}{\partial (M,k)}|_{(\Ms,\ks)}$
.} of the first-order Whitham equations at $(\Ms,\ks)$ is necessary for spectral stability; in particular both (D1) and (D2) imply this notion of weak hyperbolicity.
\ec

Corollary \ref{DHcor} gives rigorous validation of the Whitham equations as formal predictors of stability.
Indeed, their hyperbolicity is often used as a definition of ``modulational stability.''

\br\label{H3_regularity}
\textup{
Assumption (H3) provides two kinds of regularity in a simple unified way:
on one hand it gives the analyticity of critical spectral modes of $L$, and on the other hand, when combined with weak hyperbolicity (here following from (D1) or (D2)), it yields strict hyperbolicity of the Whitham's system. We expect that, by usual considerations, it could be replaced with symmetrizability of the Whitham's system and a direct smoothness assumption on spectral expansions.
}
\er

\medskip

We still need to say some words about what we mean by a nonlocalized perturbation. First, a localized perturbation of $\bar U$ is something that may be written as $\bar U+v$ with $v$ localized (and smooth), say $v\in L^1(\RM)\cap H^K(\RM)$. Note that in order for the process of gluing together a left portion of the original wave,
some function on a finite interval, and a right portion of the original wave
to yield a localized perturbation (according to our definition), the 
left-hand and right-hand copies of the original wave should be in phase. 
It is 
%
this stringent condition (corresponding to a mean-free condition for the local wave number) that we want to relax in going to nonlocalized perturbations. 

Thus, 
rather than localized perturbations, we 
consider perturbations of the type $(\bar U+v)\circ\Psi$ with $v$ and $\d_x(\Psi-\Id)$ localized, allowing for changes in phases between 
limiting left and right waves ($\Psi-\Id$ is not localized) but not a change in the 
waves themselves, for instance in its wave number or its mean. In other words, our nonlocalized perturbations will still yield localized data for the Whitham system \eqref{whitham}.

\subsection{Results and implications}
With these preparations, we are ready to state our two main theorems. The first one is an extension of \cite{JZ2,JZ3} to stability under nonlocalized perturbations. The second one provides 
asymptotic behavior by validating the scenario \eqref{wref}-\eqref{whitham}.

Here, and throughout the paper,
given two real valued functions $A$ and $B$, we say that $A\lesssim B$
or that for every $x\in\textrm{dom}(A)\cap\textrm{dom}(B)$, $A(x)\lesssim B(x)$
if there exists a constant $C>0$ such that $A(x)\leq CB(x)$ for each $x\in\textrm{dom}(A)\cap\textrm{dom}(B)$.
Even in a chain of inequalities, we will also feel free to denote by $C$ harmless constants with different values.

\begin{theorem}[Stability]\label{oldmain}
Let $K\geq 3$. Assuming (H1)--(H3) and (D1)-(D3), let
$$
E_0:=\|\tilde u_0(\cdot-h_0(\cdot))-\bar U(\cdot)\|_{L^1(\RM)\cap H^K(\RM)}
+\|\partial_x h_0\|_{L^1(\RM)\cap H^K(\RM)}
$$
be sufficiently small, for some choice of phase shift $h_0$.
Then, there exists a global solution $\tilde u(x,t)$ of \eqref{cons}
with initial data $\tilde u_0$ and a phase function
$\psi(x,t)$ such that $\psi(\cdot,0)=h_0$ and, 
introducing a global phase shift $\psi_\infty=(h_0(-\infty)+h_0(\infty))/2$, 
for $t\geq0$ and $2\le p \le \infty$,
\ba\label{mainest}
\|\tilde u(\cdot-\psi(\cdot,t), t)-\bar U(\ \cdot\ )\|_{L^p(\RM)}
&\lesssim E_0 (1+t)^{-\frac{1}{2}(1-1/p)}\\
\|\nabla_{x,t}\,\psi(t) \|_{L^p(\RM)}
&\lesssim E_0 (1+t)^{-\frac{1}{2}(1-1/p)},\\
\ea
and
\ba\label{andpsi}
\|\tilde u(t)-\bar U(\ \cdot\ -\psi_\infty)\|_{L^\infty(\RM)}, \quad
\|\psi(t)-\psi_\infty\|_{L^\infty(\RM)} &\lesssim E_0,\\
\|\tilde u(t)-\bar U\|_{L^\infty(\RM)} &\lesssim E_0+|\psi_\infty\ mod\ 1|;
\ea
in particular
\be\label{backest}
%
\|\tilde u(\cdot, t)-\bar U(\cdot+\psi(\cdot,t))\|_{L^p(\RM)}
\lesssim E_0 (1+t)^{-\frac{1}{2}(1-1/p)}.
\ee
\end{theorem}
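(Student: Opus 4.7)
The plan is to adapt the nonlinear iteration scheme developed in \cite{JNRZ1,JNRZ2,JZ2,JZ3} to accommodate the nonlocalized phase shift $h_0$. I would introduce the phase-modulated perturbation
\[
v(x,t):=\tilde u(x-\psi(x,t),t)-\bar U(x),
\]
with $\psi$ a free function to be determined later, subject to the initial condition $\psi(\cdot,0)=h_0$. Substituting the resulting ansatz $\tilde u(y+\psi(y+\psi,t),t)=\bar U(y)+v(y,t)$ into \eqref{cons} and carefully tracking the chain-rule contributions produces a quasilinear equation of the schematic form
\[
(\partial_t-L)\bigl(v+\psi\,\bar U'\bigr)=\mathcal{N}\bigl(v,\nabla_{x,t}\psi,\partial_x^2\psi\bigr),
\]
where $L$ is the linearized operator defined in \eqref{lin} and $\mathcal{N}$ is a nonlinearity that is at least quadratic in its arguments and—crucially—depends on $\psi$ only through its space-time derivatives. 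The initial datum $v(\cdot,0)=\tilde u_0(\cdot-h_0(\cdot))-\bar U$ has size $E_0$ in $L^1\cap H^K$, as does $\partial_x h_0$.

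The next step is to exploit Floquet/Bloch decomposition under (D1)--(D3): writing $e^{Lt}=S^p(t)+\tilde S(t)$, where $S^p$ is the spectral projection onto the $(n+1)$-dimensional critical subspace $\Sigma_\xi$ bifurcating from $\lambda=0$ (Proposition~\ref{whiteig}) and $\tilde S$ decays exponentially in $L^2$. I would then use the freedom in $\psi$ to absorb the slow translational mode by defining $\psi$ implicitly via a fixed-point prescription
\[
\psi(x,t)=\bigl[\tilde s^p(t)h_0\bigr](x)+\bigl[s^p(t)(\tilde u_0(\cdot-h_0)-\bar U)\bigr](x)+\int_0^t\bigl[s^p(t-s)\mathcal{N}(s)\bigr](x)\,ds,
\]
where $s^p(t)$, $\tilde s^p(t)$ are suitable scalar projections onto the $\bar U'$-direction inside $\Sigma_\xi$. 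With this choice the $\bar U'$-content of $v$ cancels and $v$ is governed by the better-behaved complementary slow mode (spanned by $\partial_M\bar U$ per Lemma~\ref{jordanlem}) together with $\tilde S$. The relevant linear estimates, to be proven in the bulk of the paper via spectral expansions that use (H3) for analyticity of projections, are of heat-kernel type: $\|\tilde S(t)f\|_{L^p}\lesssim (1+t)^{-\frac12(1-1/p)}\|f\|_{L^1\cap L^p}$ and $\|\nabla_{x,t}s^p(t)f\|_{L^p}\lesssim (1+t)^{-\frac12(1-1/p)}\|f\|_{L^1\cap L^p}$, with $s^p(t)f$ itself bounded in $L^\infty$ and tending to the prescribed average $\psi_\infty$ in the long-time limit.

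With these linear estimates in hand, a standard bootstrap closes the argument. Assuming the templates \eqref{mainest} hold up to some time $T$, the quadratic structure of $\mathcal{N}$ yields a source term that is time-integrable against the linear Bloch kernels, reproducing the bounds with a constant proportional to $E_0+o(1)$, so that for $E_0$ small the estimates persist for all $t\geq 0$. The bound $\|\psi(t)-\psi_\infty\|_{L^\infty}\lesssim E_0$ follows by tracking the $L^\infty$ limit of $\tilde s^p(t)h_0$ as $t\to\infty$ and noting that the remaining contributions decay; the bounds on $\tilde u(t)-\bar U(\cdot-\psi_\infty)$ and $\tilde u(t)-\bar U$ in \eqref{andpsi} follow by combining with translation. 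Finally, \eqref{backest} is obtained from \eqref{mainest} by the change of variables $x\mapsto x+\psi(x,t)$, which is a near-identity diffeomorphism since $\|\psi_x\|_{L^\infty}$ is small by Sobolev embedding applied to \eqref{mainest} at $p=\infty$.

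The principal obstacle is reconciling the nonlocalized nature of $\psi$ (bounded but in no $L^p$ space for $p<\infty$) with the $L^1$-to-$L^p$ linear estimates that drive the decay. The key observation enabling this is that $\mathcal{N}$ depends on $\psi$ only through the derivatives $\nabla_{x,t}\psi$ and $\partial_x^2\psi$, all of which remain in $L^1\cap H^K$ (initially by $\partial_x h_0\in L^1\cap H^K$, and at later times by the bootstrap); the undifferentiated phase $\psi$ itself never needs to be estimated in a decaying norm. A secondary difficulty, specific to the system/conservation-law case, is that $\Sigma_0$ is $(n+1)$-dimensional while $\psi$ absorbs only the translational direction $\bar U'$, leaving the mass direction $\partial_M\bar U$ genuinely present in $v$—explaining why the $L^p$-rate for $v$ is the heat rate rather than the faster Burgers-type rate, and why (H3) is essential to analytically diagonalize the remaining slow block and obtain the sharp estimates on $\tilde s^p(t)h_0$.
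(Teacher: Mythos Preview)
Your overall architecture matches the paper's: same perturbation variable $v$, same equation $(\partial_t-L)(v+\psi\bar U')=\mathcal{N}$ with $\mathcal{N}$ depending on $\psi$ only through derivatives, same Bloch-based splitting of $e^{Lt}$, same implicit choice of $\psi$ to absorb the $\bar U'$-mode, and the same continuous-induction bootstrap. But there is a concrete gap in where you locate the nonlocalization difficulty. You say the obstacle is that $\psi\notin L^p$ while $\mathcal{N}$ depends only on $\nabla_{x,t}\psi$; this is true but incomplete. The Duhamel formula reads $v+\psi\bar U'=e^{Lt}(d_0+h_0\bar U')+\int_0^t e^{L(t-s)}\mathcal{N}\,ds$, and the \emph{linear} initial contribution $h_0\bar U'$ is just as nonlocalized: $h_0\bar U'\notin L^1$, so your stated heat-kernel estimate $\|\tilde S(t)f\|_{L^p}\lesssim(1+t)^{-\frac12(1-1/p)}\|f\|_{L^1\cap L^p}$ does not apply to it, and your unspecified operator ``$\tilde s^p(t)h_0$'' papers over exactly this point. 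The paper's main new ingredient beyond \cite{JZ2,JZ3} is Proposition~\ref{modpropg}, establishing that $s^{\rm p}(t)(h_0\bar U')$ and $\tilde S(t)(h_0\bar U')$ obey the same bounds as for localized data, measured by $\|\partial_x h_0\|_{L^1}$. The mechanism is the cancellation $\langle\tilde\phi_j(0),\bar U'\rangle=0$ (constant test function against a mean-zero periodic function), which gains a factor $\xi$ compensating the Jordan-block singularity $1/(i\ks\xi)$ built into $s^{\rm p}$; the high-frequency part of $\check h_0$ is handled separately via \eqref{key_initial_data}.

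A second, related omission: the same $1/(i\ks\xi)$ factor also degrades $s^{\rm p}(t-s)\mathcal{N}$ in the Duhamel integral. Mere quadratic smallness of $\mathcal{N}$ is not enough to close the iteration in the generic (phase-coupled) case; what saves it is that $\mathcal{N}=\partial_x\mathcal{Q}+\partial_x\mathcal{R}+\partial_t\mathcal{S}$ comes in divergence/time-derivative form (Lemma~\ref{lem:canest}), so that \eqref{finaleg} recovers the lost $(1+t)^{-1/2}$. Without this structure the $p=2$ bootstrap integral $\int_0^t(1+t-s)^{-1/4}(1+s)^{-1/2}\,ds$ grows like $(1+t)^{1/4}$ and the argument fails. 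Finally, your prescription for $\psi$ does not satisfy $\psi(\cdot,0)=h_0$; the paper inserts a time cutoff $\chi(t)$ (see \eqref{psidefg}--\eqref{closedg}) to interpolate between the initial constraint and the $s^{\rm p}$-definition across the layer $0\le t\le 1$.
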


\br\label{orbital_modulational}
\textup{
The above result suggests the introduction of 
``space-modulated distances''\footnote{
These are not true distances, but rather measures 
 associated with a seminorm.}
$$
\delta_X(u,v)\ =\ \inf_\Psi\quad \|u\circ\Psi-v\|_X\ +\ \|\d_x(\Psi-\Id)\|_X.
$$
In these terms, it states 
$\delta_{L^1\cap H^K}-\delta_{L^2\cap L^\infty}$ asymptotic 
stability,\footnote{The proof gives also a $\delta_{L^1\cap H^K}-\delta_{H^K}$ asymptotic stability.} and $\delta_{L^1\cap H^K}-\|\,\cdot\,\|_{L^\infty}$ bounded 
(orbital) stability. 
In the following, 
among other things, 
we discuss situations, involving appropriate
uncoupling conditions, under which one may go from this 
``space-modulated'' asymptotic stability to 
the usual $\|\,\cdot\,\|_{L^1\cap H^K}-\|\,\cdot\,\|_{L^2\cap L^\infty}$ asymptotic stability. 
Note that this 
notion of ``space-modulated'' stability is a natural generalization of the more common one of orbital stability for patterns with localized variations (e.g. fronts, shocks, kinks, solitons, etc.), where the above infimum is taken over uniform translations only.
}
\er
%

\begin{theorem}[Asymptotic behavior]\label{main}
Let $\eta>0$, arbitrary, and $K\geq4$.
Under the assumptions of Theorem \ref{oldmain}, 
and suitable parametrization 
there exist $M(x,t)$, and $\psi(x,t)$ such that $\psi(\cdot,0)=h_0$ 
and, 
with global phase shift $\psi_\infty=(h_0(-\infty)+h_0(\infty))/2$, 
for $t\geq0$, $2\le p\le\infty$,
\ba\label{refinedest}
\|\tilde u(\cdot- \psi(\cdot,t), t)-U^{\Ms+M(\cdot,t),\ks/(1-\psi_x(\cdot,t))}(\cdot)\|_{L^p(\RM)}
&\lesssim E_0\,\ln(2+t)\, (1+t)^{-\frac{3}{4}},\\
\|(\ks\psi_x,M)(t)\|_{L^p(\RM)}
&\lesssim E_0 (1+t)^{-\frac{1}{2}(1-1/p)},\\
%
\|\psi(t)-\psi_\infty\|_{L^\infty(\RM)} &\lesssim E_0.
\ea
Moreover, setting $\Psi(\cdot,t)=(\Id-\psi(\cdot,t))^{-1}$, 
$\kappa=\ks\d_x\Psi$, $\cM(\cdot,t)=(\Ms+M(\cdot,t))\circ\Psi(\cdot,t)$, 
and defining $(\cM_W,\kappa_W)$ and $\Psi_W$ to be solutions of
equations \eqref{whitham}, and \eqref{psieq} with initial data 
\be\label{initial_data}
\begin{array}{rcl}
\cM_W(\cdot,0)&=&
\Ms+\tilde u_0-\bar U\circ\Psi(\cdot,0)+\left(\dfrac{1}{\d_x\Psi(\cdot,0)}-1\right)\left(\bar U\circ\Psi(\cdot,0)-\Ms\right),\\
\kappa_W(\cdot,0)&=&\ks\d_x\Psi(\cdot,0),\\
\Psi_W(\cdot,0)&=&\Psi(\cdot,0),
\end{array}
\ee
we have, for $t\geq0$, $2\le p\le\infty$,
\ba\label{refinedest_comp}
\|(\cM,\kappa)(t)-(\cM_W,\kappa_W)(t) \|_{L^p(\RM)}
&\lesssim E_0 (1+t)^{-\frac{1}{2}(1-1/p)-\frac{1}{2}+\eta},\\
\|\Psi(t) -\Psi_W(t) \|_{L^p(\RM)}
&\lesssim E_0 (1+t)^{-\frac{1}{2}(1-1/p)+\eta};\\
\ea
in particular, $\kappa=\ks\d_x\Psi$, $\kappa_W=\ks\d_x\Psi_W$, and
\be\label{last}
\begin{array}{rcl}
\|\tilde u(\cdot,t)
-U^{\cM(\cdot ,t), \kappa(\cdot,t)}(\Psi(\cdot,t) )\|_{L^p(\RM)}
&\lesssim& E_0\,\ln(2+t)\,  (1+t)^{-\frac{3}{4}},\\
\|\tilde u(\cdot,t)
-U^{\cM_W(\cdot ,t), \kappa_W(\cdot,t)}(\Psi_W(\cdot,t) )\|_{L^p(\RM)}
&\lesssim& E_0 (1+t)^{-\frac{1}{2}(1-1/p)+\eta}.
\end{array}
\ee
\end{theorem}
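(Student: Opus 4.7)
The plan is to refine the nonlinear stability framework of Theorem \ref{oldmain} by tracking \emph{both} the phase $\psi$ and the mean modulation $M$, and then to compare the resulting effective modulation triple $(\cM,\kappa,\Psi)$ with the Whitham solution $(\cM_W,\kappa_W,\Psi_W)$ started from the matched initial data \eqref{initial_data}. In the Bloch decomposition of the linearized semigroup $e^{Lt}$, assumption (D3) together with Lemma \ref{jordanlem} identifies an $(n+1)$-dimensional critical spectral subspace at $\xi=0$ spanned by $\{\d_M\bar U,\bar U'\}$, while (D2), (H3), and the analytic low-frequency expansion of the critical projector $\Pi(\xi)$ associated with Proposition \ref{whiteig} yield analytic continuation of this subspace for small $\xi$ and exponential decay on its spectral complement. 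Projecting the perturbation along these two critical directions naturally defines $\psi$ (carried by $\bar U'$) and $M$ (carried by $\d_M\bar U$) and leads to the improved ansatz
$$
\tilde u(x-\psi(x,t),t) \;=\; U^{\Ms+M(x,t),\,\ks/(1-\psi_x(x,t))}(x) \;+\; v(x,t),
$$
where $v$ lies in the spectrally subdominant complement and hence decays at the enhanced rate $\ln(2+t)(1+t)^{-3/4}$ in $L^p$ (the logarithm being the familiar borderline Duhamel cost from quadratic interaction of the two critical families). This yields the first line of \eqref{refinedest}; the remaining lines are inherited from Theorem \ref{oldmain} applied to the enlarged modulation set.

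Substituting the ansatz into \eqref{cons} and matching orders in $(M,\psi_x,v)$ produces, after projection onto the two critical directions, a quasilinear $(n+1)\times(n+1)$ system for $(\cM,\kappa)$ of Whitham type,
$$
\d_t\!\begin{pmatrix}\cM\\ \kappa\end{pmatrix}+\ks\,\d_x\!\begin{pmatrix}F-\cs\cM\\ -\omega-\cs\kappa\end{pmatrix}\;=\;\ks^2\,\d_x\!\left(D(\cM,\kappa)\,\d_x\!\begin{pmatrix}\cM\\ \kappa\end{pmatrix}\right)+\mathcal R,
$$
together with \eqref{psieq} for $\Psi$ augmented by an analogous remainder $\mathcal R_\Psi$, where both $\mathcal R$ and $\mathcal R_\Psi$ are quadratic-or-higher in $(M,\psi_x,v)$ plus terms carried by $v$ itself. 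Using the first step above and Theorem \ref{oldmain}, these remainders admit $L^p$ bounds smaller than the linear decay of $(M,\psi_x)$ by a factor $(1+t)^{-\frac12+\eta}$. Crucially, the nonlinear change of variables $\Psi=(\Id-\psi)^{-1}$, $\cM=(\Ms+M)\circ\Psi$, $\kappa=\ks\d_x\Psi$ aligns the effective modulation triple with the Whitham initialization \eqref{initial_data} at $t=0$, so that $(\cM-\cM_W,\kappa-\kappa_W,\Psi-\Psi_W)$ has zero initial data.

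By (H3) the first-order part of the Whitham system is strictly hyperbolic at $(\Ms,\ks)$ and by (D2) (via Corollary \ref{DHcor} and \eqref{as}) its second-order part is parabolic, so the linearization of \eqref{whitham} about $(\Ms,\ks)$ is strictly hyperbolic--parabolic and its semigroup admits sharp Gaussian-type $L^p$ decay along distinct characteristic velocities $\ks a_j$. The differences $(\cM-\cM_W,\kappa-\kappa_W,\Psi-\Psi_W)$ satisfy this linearized system with zero initial data and forcing $\mathcal R, \mathcal R_\Psi$ plus quadratic interaction terms with $(\cM_W,\kappa_W,\Psi_W)$; a Duhamel iteration controlling convolutions of the form $\int_0^t (1+t-s)^{-\frac12(1-1/p)}(1+s)^{-1+\eta}\,\dif s$ then closes the nonlinear estimate \eqref{refinedest_comp}. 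Finally \eqref{last} follows by the triangle inequality: its first bound is a rewriting of \eqref{refinedest} in $(\cM,\kappa,\Psi)$ variables via the substitution above, and its second bound combines a Lipschitz estimate of $U^{M,k}(y)$ in $(M,k,y)$ with \eqref{refinedest_comp}. The main obstacle is the sharp control of the forcing terms in $L^p$ at precisely the exponents demanded by the linearized Whitham semigroup: different modulation modes are transported at distinct linear group velocities $\ks a_j$, and the fine alignment of characteristic directions with the critical Bloch projectors is what ultimately governs the borderline rates together with the logarithmic correction appearing in \eqref{refinedest}.
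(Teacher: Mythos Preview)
Your high-level outline is right---refine the critical decomposition to track both $\psi$ and $M$, show the residual $z$ decays at the enhanced rate $\ln(2+t)(1+t)^{-3/4}$, then compare $(\cM,\kappa)$ to the Whitham solution via Duhamel---and this matches the paper's architecture. But two of your central claims constitute genuine gaps.

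First, your assertion that ``the nonlinear change of variables\ldots aligns the effective modulation triple with the Whitham initialization \eqref{initial_data} at $t=0$, so that $(\cM-\cM_W,\kappa-\kappa_W,\Psi-\Psi_W)$ has zero initial data'' is incorrect, and this is precisely the subtle point the paper singles out in Remark~\ref{formal_rmk}. The Whitham initial data \eqref{initial_data} is \emph{not} $(\cM(\cdot,0),\kappa(\cdot,0))$; in particular $\cM_W(\cdot,0)$ contains the term $(1/\partial_x\Psi(\cdot,0)-1)(\bar U\circ\Psi(\cdot,0)-\Ms)$, which has no counterpart in $M(\cdot,0)=s^M(0)(d_0+h_0\bar U')$. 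This term is not seen by any change of variables; it emerges from the detailed comparison of the Bloch operator $s^{\rm p}(t)$ with the constant-coefficient Whitham semigroup $\Sigma(t)$ (Proposition~\ref{heatcompg}), specifically from the high-Floquet-frequency contribution of $h_0\bar U'$ encoded in the identity \eqref{key_initial_data}. The paper does not show that $\delta=(M,\ks\psi_x)-(M_W,k_W)$ has zero initial data; it shows instead that both quantities satisfy the \emph{same} Duhamel formula against $\Sigma(t)$ with the \emph{same} linear source $\Sigma(t)(d_0+m(h_0),\ks\partial_xh_0)$, up to a residual $\tilde r^{\rm p}(t)$, where $m(h_0):=-(\bar U-\Ms)\partial_xh_0$ is forced by the linear comparison, not by matching at $t=0$.

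Second, your ``substitute the ansatz into \eqref{cons} and project onto the critical directions to obtain a Whitham-type system'' is essentially the formal WKB route of Appendix~\ref{W_app}, and it does not by itself produce a rigorous comparison. The paper's mechanism is different: it (i) identifies the dominant quadratic part of $\mathcal N$ explicitly in terms of periodic coefficients $f^{\rm p}_{**},g^{\rm p}_{**}$ (Lemma~\ref{sharplemmag}), (ii) proves $s^{\rm p}(t)$ and $\Sigma(t)$ agree to the required order when acting on data of the relevant forms (Proposition~\ref{heatcompg}, equations \eqref{heatwhitham_init}--\eqref{heatwhitham_time}), and (iii) computes the resulting averaged coupling constants and shows they coincide with the quadratic Whitham coefficients \emph{after} accounting for the implicit change of independent variable (Lemma~\ref{calcsg} together with Section~\ref{s:implicitchange}). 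Step (iii) is nontrivial algebra relying on the profile identities of Appendix~\ref{algebraic}, and the coordinate-change correction modifies the $\cM$-equation at quadratic order---something your projection argument does not detect. Without these pieces you cannot conclude that the forcing in the $\delta$-equation is genuinely $\cO((1+t)^{-1+\eta})$ rather than merely $\cO((1+t)^{-1/2})$, and the Duhamel closure for \eqref{refinedest_comp} would fail.
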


\br\label{phase_wavenumber_rmk}
\textup{
A suitable choice in parametrization is made here
to ensure that interdependences on $k$ and $\beta$ in $U^{(M,k)}(\cdot+\beta)$ are compatible with the expected relation between local phase and local wave number, $\kappa=\ks\d_x\Psi$. Explicitly, our normalizing choice is performed in \eqref{norm_kg} (which involves $\tilde q_{n+1}(0)=\bar u^{adj}$ defined in Proposition~\ref{linspecg}) to get Lemma~\ref{keylemg}.
}
\er

\br\label{formal_rmk}
\textup{
Prescription of the initial data \eqref{initial_data}, especially for $\cM_W(\cdot,0)$, is a subtle point\footnote{
This issue does not arise in the related analysis \cite{JNRZ2} of the
reaction-diffusion case, as $M$ does not appear.} not evident from the
viewpoint of formal approximation \eqref{wref}-\eqref{whitham}. In particular, the 
appearance of a term related to phase variations in \eqref{initial_data}(i) arises in our analysis through a detailed study of 
the contribution of high frequencies of the local wave number to variations of the low Floquet 
number part of the solution (see the key equality \eqref{key_initial_data}). Nevertheless, 
in the end,
each term involved in \eqref{initial_data}(i) 
has a nice interpretation,
with $\tilde u_0-\bar U\circ\Psi(\cdot,0)$ accounting for the contribution of amplitude variations to the initial perturbation of the mean $\Ms$ and $\left(1/\d_x\Psi(\cdot,0)-1\right)\left(\bar U\circ\Psi(\cdot,0)-\Ms\right)$ encoding the contribution of period variations. 
To be more specific, on one hand, setting $\tilde d_0=\tilde u_0-\bar U\circ\Psi(\cdot,0)$, we observe that $\tilde d_0$ differs from $x\longmapsto \int_{-1/2}^{1/2} \tilde d_0(x+y)\,dy$, which is easier to interpret, by a localized zero-mean function, a difference that is asymptotically irrelevant\footnote{This follows from the general theory for parabolic systems of conservation laws, see for instance Proposition \ref{cl_lemma}.} at our level of description. Likewise, assuming on the other hand validity of the approximation $\bar U\circ\Psi(\cdot,0)\sim\bar U+(\Psi(\cdot,0)-\Id_\R)\bar U'$ leads us to consider $x\longmapsto \int_{-1/2}^{1/2} (\Psi(x+y,0)-(x+y))\bar U'(x+y)\,dy$ which is
$$
x\longmapsto-\int_{-\frac12}^{\frac12} (\d_x\Psi(x+y,0)-1)\bar U(x+y)\,dy\ +\ \bar U\big(x-1/2\big)\,\big[\Psi\big(x+1/2,0\big)-\Psi\big(x-1/2,0\big)\big]
$$
where, up to localized zero-mean functions, the first part of the sum is $-(\d_x\Psi(\cdot,0)-1)\bar U$ and the second reduces to $(\d_x\Psi(\cdot,0)-1)\Ms$. Note that the fact that in the end of this latter formal computation we recover the formula of the Theorem only in an approximate way reveals that the first approximation in the argument is invalid. Yet this incorrect approximation possesses a correct 
analog
(see Section~\ref{s:pert}) leading to the initial data of the Theorem. 
Note also that any small localized perturbation of $(\Ms,\ks)$ may be realized as initial data in \eqref{initial_data} by appropriately choosing $h_0$ and $\tilde u_0$ so that at our level of accuracy the full dynamics of \eqref{whitham} near $(\Ms,\ks)$ are present in \eqref{cons} around $\bar U$.}
\er

\br\label{better_rmk}
\textup{
Bounds \eqref{last} are both of form \eqref{wref}, with $\ks\Psi_x=\kappa$, validating a slow modulation 
picture of behavior. Yet comparison of \eqref{last}(i) with \eqref{last}(ii) reveals that by allowing $(\cM,\kappa)$ to satisfy \eqref{whitham} only in an approximate way we here construct a phase modulation more accurate at least by factor $(1+t)^{-1/4-\eta}$ than that of the formal Whitham construction \eqref{wref}-\eqref{whitham}.\footnote{Here we are using the additional fact (not explicitly stated here) that estimate \eqref{last}(ii) is sharp. 
On the other hand, we do not expect \eqref{last}(i) to be sharp (see Remark~\ref{nonsharp_rmk}).}
This is again a manifestation of the fact that comparisons to periodic functions 
are very sensitive even to small perturbations in description of respective spatial positions as encoded by local phases.
}
\er

{\it Decay for localized perturbations.} 
Standard bounds on localized solutions of systems of parabolic
conservation laws of form \eqref{whitham} (see Proposition \ref{cl_lemma} below)
show that in general
$$
\|\kappa(t)-\ks\|_{L^p(\RM)}\sim (1+t)^{-\frac{1}{2}(1-1/p)},
\quad
\|\Psi(t)-\Id\|_{L^\infty(\RM)}\sim 1,
$$
so that
\be\label{formal_bd2}
\|U^{\cM(\cdot,t),\kappa(\cdot,t)}(\Psi(\cdot,t))
-\bar U(\cdot)\|_{L^\infty(\RM)}
\sim
\|\Psi(t)-\Id\|_{L^\infty(\RM)}
\sim 1
\ee
$\gg (1+t)^{-\frac{1}{2}+\eta}$.
Together with \eqref{last},
this rigorously validates the formal Whitham approximation
while simultaneously showing that estimates \eqref{mainest}--\eqref{andpsi} are
sharp for nonlocalized perturbations, $h_0\not \equiv 0$, 
leading always to nontrivial localized data \eqref{initial_data} 
in $\kappa_W$ for the Whitham system \eqref{whitham},
and for localized perturbations $h_0\equiv 0$
are sharp in the generic case where no uncoupling is present. 

However, an interesting further implication of \eqref{last}
is that when $\kappa_W$ decouples to sufficient order from
the rest of the Whitham equations,
the estimates \eqref{mainest}--\eqref{andpsi} {\it can be sharpened for
localized perturbations, to yield asymptotic decay}.
To make this latter point precise, we introduce the following definitions.

\begin{definition}\label{cases}
We say that a wave is {\it linearly phase-decoupled} if 
$\partial_M c|_{(\Ms,\ks)}=0$, or, equivalently, 
$\partial_M \omega (\Ms,\ks)=0$ in \eqref{whitham}:
that is, $\kappa$
is a characteristic variable for \eqref{whitham} at the special point $(\Ms,\ks)$. Otherwise, we will say 
that it is {\it linearly phase-coupled}, or simply ``generic type.''

\end{definition}

\begin{definition}\label{quadcases}
We say that a wave is {\it quadratically phase-decoupled} if both
$\partial_M c|_{(\Ms,\ks)}=0$ and $\partial_M^2 c|_{(\Ms,\ks)} =0$.
or, equivalently, 
$\partial_M \omega (\Ms,\ks)=0$ and $\partial_M^2 \omega (\Ms,\ks)=0$ 
in \eqref{whitham}.
\end{definition}

This simple classification unifies and generalizes a number of observations
in \cite{OZ1,OZ2,JZ2,JZ3}. As we have seen in Lemma~\ref{jordanlem} and shall discuss further in Remark~\ref{scalingrmk}, linear phase-decoupling implies that {\it to linear order} the phase behaves similarly as in the reaction-diffusion case studied in \cite{S1,S2,JZ1} (localized perturbations) and \cite{JNRZ1,JNRZ2,SSSU} (nonlocalized), for which the associated Whitham system consists of a single 
equation\footnote{See Remark \ref{rd_to_scalar}.}
$$
\kappa_t-\ks(\omega(\kappa)+\cs\kappa)_x=\ks^2(d(\kappa)\kappa_x)_x,
\qquad
\omega(k)=-k\,c(k),
$$
encoding the nonlinear dispersion relation induced by the periodic existence theory. In particular, we shall show that spectrally stable linearly phase-decoupled waves like spectrally stable reaction-diffusion waves are {\it linearly and nonlinearly asymptotically stable} and not only boundedly stable {\it with respect to localized perturbations}. Yet {\it at the nonlinear level} the phase behaves similarly as in the reaction-diffusion case, sharing the same decay rates, only if quadratic decoupling is present. The situation is actually simpler in higher dimensions where the asymptotic dynamics are essentially linear and the distinction between linearly decoupled and generic cases is sufficient \cite{OZ4,JZ2}. We make these observations precise in the following corollary. For a proof, see Appendix \ref{s:pfCorphasethm}.

\bc[Localized perturbations]\label{phasethm}
Under the assumptions of Theorem \ref{main}, 
for localized perturbations $h_0\equiv 0$, 
and $\psi$ defined as in Theorem \ref{main},
if $\bar U$ is linearly phase-decoupled and 
$E_1:=E_0+\||\cdot|\,(\tilde u_0-\bar U)\|_{L^1(\RM)}$ is sufficiently small, then,
for $t>0$ and  $2\le p\le \infty$,
\ba\label{linimp}
\|\nabla_{x,t} \psi(t) \|_{L^p(\RM)}
&\lesssim
E_1 (1+t)^{-\frac{1}{2}(1-1/p)-\frac14+\eta},\\
\|\tilde u(t)-\bar U\|_{L^p(\RM)}, \quad
\| \psi(t) \|_{L^p(\RM)} &\lesssim
E_1(1+t)^{\frac{1}{2p}-\frac14+\eta},
\ea
while if $\bar U$ is quadratically phase-decoupled and $E_0$ is sufficiently small, then
\ba\label{quadimp}
\|\nabla_{x,t} \psi(t) \|_{L^p(\RM)}
&\lesssim
E_0 (1+t)^{-\frac{1}{2}(1-1/p)-\frac12+\eta},\\
\|\tilde u(t)-\bar U\|_{L^p(\RM)}, \quad
\| \psi(t) \|_{L^p(\RM)} &\lesssim
E_0(1+t)^{-\frac12(1-1/p)+\eta }\ ;
\ea
for $t>0$ and $2\le p\le\infty$.
In either case, $\bar U$ {\rm is nonlinearly asymptotically stable}
from\\ $L^1(\R;(1+|x|)dx)\cap H^K(\R)$ to $L^p(\R)$, for all $2<p\le \infty$.
\ec

\br\label{comprd}
\textup{
Comparing bounds \eqref{quadimp} for localized perturbations and
\eqref{mainest}--\eqref{andpsi} for nonlocalized perturbations
to those obtained in \cite{SSSU,JNRZ1,JNRZ2}, we see that bounds
for the quadratically phase-decoupled case exactly match the
bounds for reaction-diffusion systems.
}
\er

\br\label{zero-mean}
\textup{
One may wish to express localization as a mean-free condition on $\d_xh_0$. Actually, in the above bounds, the condition $h_0\equiv0$ may indeed be relaxed 
to the condition that
$\d_xh_0$ is mean-free and 
either $E_1:=E_0+\||\cdot|\,\d_xh_0\|_{L^1(\RM)}$ is small 
in the quadratically phase-decoupled case or 
$E_1:=E_0\||\cdot|\,\d_xh_0\|_{L^1(\RM)}+\||\cdot|\,(\tilde u_0(\cdot-h_0(\cdot))-\bar U)\|_{L^1(\RM)}$ is small in the linearly phase-decoupled case. 
In either case, the conclusion is (asymptotic) orbital stability with asymptotic phase $\psi_\infty=(h_0(-\infty)+h_0(\infty))/2$ (in the sense of \cite{He}).
}
\er
%

\br\label{no_interaction}
\textup{
For the analysis of localized perturbations in the linearly phase-decoupled case, assumption (H3) plays a role deeper than just providing regularity in a simple way. Indeed, in this case, the extra damping $(1+t)^{-1/4}$ in \eqref{linimp} encodes the fact that quadratic interactions between diffusion waves traveling at different characteristic speeds are asymptotically irrelevant (\cite{L}, see Remark \ref{Liu_explanation}). Thus, here one should not expect to be able to replace (H3) with something weaker than: the linear group velocity associated to the wavenumber mode is different from all other characteristic speeds.
}
\er
%

\subsection{Examples}\label{s:examples}
Having established the importance for asymptotic behavior of
the Whitham equations, we now give some examples indicating
their range of possible behaviors.
In this section, we relax the restriction,
made for expositional simplicity, to second-order
parabolic semilinear systems of conservation laws and discuss
a full range of models arising in applications,
including reaction-diffusion equations (Example \ref{sheg}),
equations with higher-order or partial diffusion
(Examples \ref{sheg}, \ref{kseg} and \ref{sveg}),
and even mixed conservative/nonconservative equations (Example \ref{sveg}).

From the point of view of the present paper, the main example
is Example \ref{viscoeg}, which illustrates for second-order parabolic
semilinear conservation laws both phase-decoupling and phase-coupling.
However, we emphasize that the analysis of all of these models
may be carried out with minor changes
within the same basic analytical framework set out here
and in \cite{JZ2,JZ3}.
We discuss this further in Appendix~\ref{s:gen}, along with the
question of numerical or analytical verification of the stability
conditions, needed to conclude validity of the Whitham equations.

To simplify the discussion, we restrict to the first-order part of the Whitham equations,
which suffices to determine the main qualitative features of solutions-
in particular, phase-decoupling vs. coupling-
and has a common derivation/form \cite{Se} independent
of second- and higher-order terms.

\begin{example}[\cite{S2}]\label{sheg}
\textup{
The Swift-Hohenberg equation
\be\label{sheq}
u_t+(1+\partial_x^2)^2u-ru+f(u)=0,
\ee
where $r\in\RM$ is a bifurcation parameter and $f$ is some sufficiently smooth nonlinearity, admits for certain values of $r$
periodic waves of speed $c\equiv 0$. This equation arises as a simplified equation for the Taylor-Couette problem and is proved to possess diffusively spectrally stable waves \cite{S2}. As a reaction-diffusion equation, with no conservative part, this yields 
(see, e.g., \cite{HK,DSSS,SSSU,JZ2,JZ3}) a scalar first-order Whitham 
equation\footnote{See Remark \ref{rd_to_scalar}.}
$$
\kappa_t=0.
$$
}
\end{example}

\begin{example}[\cite{OZ1,Se,BLeZ}]\label{viscoeg}
\textup{
The equations of one-dimensional viscoelasticity
with artificial viscosity and strain-gradient effects (``capillarity'')
may be expressed in
Lagrangian coordinates, after a change of variables \cite{ScS,OZ2}, as
\ba\label{viscoeq}
\tau_t - u_x&=\eps_1 \tau_{xx},\\
u_t - \sigma(\tau)_x& =\eps_2 u_{xx},\\
\ea
where $\tau= \chi_x\in \RM^d$ and $u=\chi_t\in \RM^d$ are derivatives 
of deformation $\chi:x\to \RM^d$, $\sigma$ is the stress-strain 
relation of the elastic material, and $\eps_1,\eps_2>0$ 
are scalar coefficients related to viscosity/capillarity; see \cite{BLeZ}. 
It is readily verified by energy considerations
\cite{Se,OZ1,PSZ} that periodic waves may only have speed $c=0$.
By this, together with Galilean invariance with respect to shifts in $u$,
we find, setting $T$, $U$, $\Sigma$ to be the means of
$\bar \tau$, $\bar u$, and $\sigma(\bar \tau)$
over one period, that the associated first-order Whitham system has form
\ba\label{viscowhit}
T_t - U_x&= 0,\\
U_t - \Sigma(T,\kappa)_x& =0,\\
\kappa_t=0,
\ea
hence, since $\omega(T,U,k)\equiv 0$ is evidently independent of $(T,U)$, is quadratically (indeed, totally) phase-decoupled.
That is, in terms of the phase equation principally determining behavior, \eqref{viscoeq} and \eqref{sheq} exhibit parallel behavior
$\kappa_t=0$,\footnote{At second order, a linear
heat equation $\kappa_t=d\kappa_{xx}$, $d>0$.}
despite their different origins.
For localized data $h_0\equiv 0$, giving $\kappa_0\equiv \ks$, this reduces
to the first-order wave equation
$T_t - U_x= 0$, $U_t - \Sigma(T,\ks)_x =0$,
which is strictly hyperbolic when $\Sigma$ is monotone decreasing, 
as occurs for some but not all cases \cite{Y}. 
By Corollary \ref{phasethm}, any spectrally
stable periodic solutions of \eqref{viscoeq}
would be nonlinearly asymptotically stable with respect to localized
perturbations, answering a question posed
in \cite{OZ2,Se,JZ2}.}

\textup{
The same equations written in Eulerian coordinates (in
terms of $\rho :=\tau^{-1}$ and $m:=u/\tau$) are {\it phase-coupled}
\cite{Se}, hence at best only nonlinearly bounded stable, with the explanation that
modulations in wave speed in this case lead to deviation of characteristic paths,
hence solutions are no longer compared along the Lagrangian trajectories
where they are most closely matched. To put things another way, coordinatization by Lagrangian markers accomplishes
a substantial part of the modulation that in Eulerian coordinates yields decay estimate \eqref{mainest}(i).
}
\end{example}

\br\label{pszrmk}
\textup{
It has been shown in \cite{OZ1,PSZ} that under a wide variety of circumstances, 
in particular, always for one-dimensional deformations, $d=1$,
periodic solutions of \eqref{viscoeq} are spectrally unstable,
{\it whether or not the first-order Whitham equations \eqref{viscowhit}
are of hyperbolic type.}
This shows the importance of the full diffusive stability conditions,
beyond the intuitive conditions of Corollary~\ref{DHcor}.
It is an interesting open problem whether there exist stable
waves for $d>1$ \cite{Y,PSZ}.
}
\er

\begin{example}\label{kseg}
\textup{
For the Kuramoto--Sivashinsky equation
\be\label{kseq}
u_t +(u^2/2)_x + u_{xx}+ u_{xxxx}=0,
\ee
setting $U$, $\Sigma$ to be the means of $\bar u$, and $\bar u^2/2$
over one period, and making use of the Galilean invariance $x\to x-ct$, $u\to u+c$,
we find that $c(U_1+U_2,k)=c(U_1,k)+U_2$  and $\Sigma(U_1+U_2,k)=\Sigma(U_1,k)+U_1U_2+U_2^2/2$. 
It is known that, within a certain parameter range, \eqref{kseq}
supports odd- hence mean-free- profiles with $c\equiv 0$ (see Remark \ref{reflectrmk}).
For these solutions, $c(0,k)=0$, hence $c(U,k)=U$. Thus, the associated first-order Whitham system is 
{\it linearly phase-coupled}, of form
\ba\label{kswhit}
U_t + \big(\Sigma(U_\star,\kappa)+(U-U_\star)^2/2\big)_x&= 0,\\
\kappa_t+((U-U_\star)\kappa)_x &=0\ .
\ea
Linearizing about constant solution $(U_\star,\ks)$ and reintroducing the 
phase through $k=\psi_x$, this gives a second-order wave equation
$\psi_{tt} + \ks(\d_k\Sigma)(U_\star,\ks)\ \psi_{xx}=0$ in the phase \cite{FST} provided $(\d_k\Sigma)(U_\star,\ks)<0$. As illustrated numerically in \cite{BJNRZ3}, linear phase-coupling has the effect that nonlocalized perturbations in the phase can arise even through localized initial perturbations.
Numerical studies \cite{FST,BJNRZ3} indicate that there exist ``bands'' in parameter space
of spectrally stable waves, satisfying hypotheses (H1)--(H3), (D1)--(D3); see Appendix \ref{s:gen} for further discussion.
}
\end{example}

\begin{example}\label{sveg}
\textup{
The Saint-Venant equations for inclined thin-film flow appear
in Lagrangian coordinates as
\ba \label{sveq}
\tau_t - u_x&= 0,\\
u_t+ ((2F)^{-1}\tau^{-2})_x&=
1- \tau u^2 +\nu (\tau^{-2}u_x)_x ,
\ea
where $\tau$ is the reciprocal of fluid height, $u$ is velocity averaged with respect to depth, 
$x$ denotes a Lagrangian marker moving with the flow, and $\nu$ and $F$ are dimensionless constants, 
with force term $1-\tau u^2$ representing the balance between 
gravity and turbulent bottom friction. 
In terms of structure, this is intermediate between the reaction-diffusion
case of \eqref{sheg} and \eqref{viscoeq}, having a first equation in
conservative (divergence) form and a second equation in
nonconservative convection-reaction-diffusion form.
The same derivation as for \eqref{whitham} yields the first-order
Whitham system
\ba\label{svwhit}
T_t- U(T,\kappa)_x&=0,\\
\kappa_t- (c(T,\kappa)\kappa)_x&=0,
\ea
where $T$ and $U$ are defined as the means of $\bar \tau$ and $\bar u$ over one period, and $(T,k)$
parametrize the associated two-parameter family of periodic traveling waves
with speed $c=c(T,k)$; see \cite{JZN,BJNRZ1,NR2} for further details.
For this model, the speed $c$ is never zero, and in particular 
depends typically nontrivially on $T$. Thus, this system, like \eqref{kswhit}, is in general fully phase-coupled.
Numerical experiments \cite{BJNRZ1,BJNRZ4} indicate that \eqref{svwhit} can be either hyperbolic (consistent
with stability) or elliptic (implying instability), depending on 
parameter values; moreover, there exists a band of parameters on which waves satisfy the stability
hypotheses (H1)--(H3), (D1)--(D3).
}
\end{example}

\br[Phase-decoupling and symmetry]\label{reflectrmk}
\textup{
As illustrated by Example \ref{viscoeg},
phase-decoupling is not always an isolated degeneracy on
a special set of parameters, but for models with special structure
may hold on an open set of parameters/waves; indeed, more,
we may have $c\equiv 0$.
This is reminiscent of the well-known principle in the reaction-diffusion
setting that, by reflection symmetry of the equations
$u_t+f(u)=u_{xx}$, even-symmetric standing-wave solutions generically
persist as families of solutions with $c(k)\equiv 0$.
For, otherwise, the fact that reflection preserves $k$ would violate
local uniqueness of solutions as a function of $k$.
Alternatively, one may observe that zero-speed waves satisfy a Hamiltonian
ODE, hence, by a dimensional count, exhaust the available dimensions
in the set of nearby solutions.
This principle is illustrated in the behavior cited in Example \ref{sheg}.
}

\textup{
Likewise, in Example \ref{viscoeg}, one finds \cite{JZN,BJNRZ1} that
zero-speed waves satisfy a Hamiltonian ODE identical to that of
the reaction-diffusion case, with $d$ free parameters
given by a constant of integration, hence, by a dimensional count,
generically fill up the $(d+1)$-dimensional set of nearby solutions,
giving $c(T,k)\equiv 0$.
Similarly, the Kuramoto--Sivashinsky equations \eqref{kseq}, 
are invariant under $x\to -x$, $c\to -c$, $u\to -u$, from which we
may deduce that odd-symmetric zero-speed solutions generically
persist, as cited in Example \ref{kseg}.
For, otherwise, the fact that reflection preserves $k$ 
would violate uniqueness with respect to $k$ of solutions with
fixed zero mean.
}
\er

\subsection{Discussion and open problems}\label{s:discussion}
Our results extend to the conservative case the results established
recently for reaction-diffusion systems in \cite{SSSU,JNRZ1,JNRZ2}
regarding behavior, and extend to nonlocalized perturbations the results
obtained for conservation laws
in \cite{JZ1,JZ2,JZN,BJNRZ1,BJNRZ2} regarding stability under localized perturbation.
The method of analysis used here is similar to but much more complicated than the arguments used in \cite{JNRZ1,JNRZ2}
to study the reaction-diffusion case
and the reader is encouraged to consult these references as motivation in a simpler context. 
As noted above, the methods of \cite{SSSU} do not seem to apply.

The main new difficulties overcome in the present analysis
beyond that of \cite{JNRZ1,JNRZ2}
are the treatment of nonlocalized perturbations in a way including the phase-coupled case, 
which has an essentially different Jordan block structure from that of the phase-decoupled case, and the identification of 
the Whitham equations with the asymptotic second-order modulation system arising naturally in our analysis
via a system of integral equations. 
The latter task involves surprisingly subtle aspects not present
in the reaction-diffusion case concerning the influence of phase modulation
$\Psi$ on the mean $\mathcal{M}$, first, through high-frequency resonances,
on its initial data (see Remark \ref{formal_rmk}), 
and, second, through the influence of the implicit
nonlinear change of independent coordinates \eqref{pertvar}
used in our nonlinear iteration scheme on the form of the Whitham
equations (see Section \ref{s:implicitchange}).\footnote{
Recall that $\mathcal{M}$ does not appear
in the Whitham equation for the reaction-diffusion case.}
Though we give a unified proof, regardless of coupling distinctions, 
mainly out of a desire for clarification of the essential features of modulation theory, 
a proof of Theorem \ref{main} would not be much simpler 
had we restricted it to the linearly-uncoupled case.

Comparing bounds \eqref{refinedest}-\eqref{last} to the corresponding bounds for reaction-diffusion systems
in \cite[Theorem 1.3]{JNRZ2}, we see that they are identical; that is, modulations are equally well-approximated for
systems of conservation laws as for reaction-diffusion systems by the formal Whitham approximation \eqref{whitham}.
It follows (through Proposition \ref{cl_lemma}) that for nonlocalized perturbations, {\it behavior and decay rates are also
essentially identical in these two cases,} as the formal asymptotics suggest.

On the other hand, for localized data, $h_0\equiv 0$, the decay estimates established for the reaction-diffusion case in \cite{OZ4,JZ1}
are faster by a factor $(1+t)^{-\frac12}$, or ``roughly one derivative'' in terms of standard heat bounds, than those of Theorem \ref{oldmain}
in the generic conservative case, which are the same for localized as for nonlocalized perturbations.
As discussed in \cite{JZ3}, linearly phase-decoupled waves exhibit a similar behavior at the linearized level. However Corollary~\ref{phasethm} shows 
that a quadratic decoupling is needed to yield a similar behavior at the nonlinear level, while waves that are linearly phase-decoupled show an intermediate behavior, asymptotic stability but with slower rates. 

At broadest level, our results confirm that an accurate distinction 
is not between reaction-diffusion and conservation law systems but
between {\it phase-decoupled}, and {\it non-phase-decoupled} waves 
(the former trivially including the reaction-diffusion case), 
which indeed exhibit the asymptotic behavior suggested by their common formal asymptotic description in terms of the Whitham equation(s).
A key new piece of information supplied by our analysis that is not present in the formal Whitham derivation
is the way in which initial data is taken on by the time-asymptotic Whitham system. Though the ultimate prescription 
in Theorem \ref{main} is simple, it is determined by a detailed series of linear and nonlinear estimates that are quite far from the techniques of formal asymptotic expansion.

We stress, finally, that our nonlinear iteration scheme is quite
robust. In particular, there is no use of analytic semi-group properties
in our argument, hence it is not sensitive to changes in order (e.g., to KS or KS-KdV) or type 
(e.g., quasilinear or degerate as for Saint-Venant) of the equations under study.
To control regularity, we mainly  use nonlinear $H^s$ damping estimates and 
$C^0$ semi-group resolvent  bounds in  $H^s$; both given by standard energy estimates techniques  
(Kawashima's if needed \cite{Ka,Z2,Z4}).
This allows a wide range of generalizations, as discussed in Appendix
\ref{s:gen}.

The diffusive spectral stability conditions (D1)--(D3) have been shown numerically to hold for ``bands'' of stable periodic waves,
in several interesting settings, and with a high degree of precision; see, for example, \cite{FST,BJNRZ2,BJNRZ3}. We view the numerical proof of these conditions, or analytical proof in interesting asymptotic limits (in the spirit of \cite{JNRZ3}), as important open problems for the theory.
The determination of asymptotic behavior in the small-wavelength limit, analogously as in \cite{DSSS}
for the reaction-diffusion case, is another important open problem. 
Likewise, extensions to the case that not only the phase
but the wave number $\kappa=\Psi_x$ has different values at plus and
minus infinity, corresponding to Riemann data for the Whitham equation 
\eqref{whitham}, is an interesting direction for future investigation; 
see \cite{DSSS,BNSZ} in the reaction-diffusion case.

\medskip

{\bf Plan of the paper:}
The plan of the rest of the paper is as follows.
In Section \ref{s:prelim} we set up the framework of the proofs (introduction of the phase, 
integral transform, etc.), then in Section \ref{s:stabg} we prove Theorem~\ref{oldmain}.
Finally, in Section \ref{s:gbehavior}, we give the proof of Theorem~\ref{main}. 
In Appendices \ref{algebraic}--\ref{s:gen} we provide, respectively,
algebraic relations obtained by differentiation of the traveling-wave ODE, 
derivation of the Whitham system \eqref{whitham},
simplifications of this system afforded by the theory of parabolic conservation laws 
for data consisting of localized perturbations of a constant state, 
and some hints regarding generalizations to other situations of mathematical or physical interest.

\section{Preliminaries}\label{s:prelim}

In this section, we discuss several technical preliminaries that we will find useful throughout our analysis. 
Specifically:
\begin{itemize}
\item we introduce the Bloch transform, the fundamental integral transform that we will use in 
deriving all of our linear estimates;
\item we show how phase shift $\psi$ is introduced in \eqref{cons} and how this affects the equations;
\item we prove a nonlinear damping energy estimate (here simply following from the parabolic nature of \eqref{cons}), establishing that high derivatives of the solution decay in time at least as fast as low derivatives so that 
technical issues are mainly,
as expected, in decay rates and localization (small Floquet or small Fourier numbers) and not in regularity;
\item we give estimates useful to analyze the effect of 
a change of independent variables on our bounds.
\end{itemize}
These preliminary issues were already 
present implicitly or explicitly in \cite{JZ2,JZ3}.

\subsection{Bloch decomposition}\label{s:transform}

To begin our analysis of the stability of a fixed 1-periodic stationary solution $\bar U$ of \eqref{cons},
recall from above that linearizing the flow of \eqref{cons} about $\bar{U}$ leads to the consideration
of the 1-periodic coefficient linear evolution equation \eqref{lin}. 
From Floquet theory, one may guess that it would be desirable in
analyzing this equation to decompose solutions as superpositions
of functions having a given Floquet exponent 
$\xi\in [-\pi,\pi]$,
i.e., functions $e^{i\xi \cdot}h(\cdot)$ with $h\in L^2([0,1])_{\rm per}$,
as described in Remark \ref{specremarks}.
This may be accomplished using the {\it Bloch transform}.

Given a function $g\in L^2(\RM)$, its 
{\it Bloch decomposition}, or 
inverse Bloch transform representation,
is defined as
\be\label{Brep}
g(x)=\int_{-\pi}^\pi e^{i\xi x}\check{g}(\xi,x)d\xi,
\ee
where
$$
\check{g}(\xi,x):=\sum_{k\in\ZM}e^{2\pi ikx}\hat{g}(\xi+2\pi k),
$$
and
$\hat{g}(z):=\frac{1}{2\pi}\int_{\RM}e^{-i\omega z}g(\omega)d\omega  $
is the Fourier transform of $g$.
Note that for any $\xi\in[-\pi,\pi]$, $\check{g}(\xi,\cdot)$ is a 1-periodic function, 
hence, as desired, $e^{i\xi\cdot}\check{g}(\xi,\cdot)$ has 
Floquet exponent $\xi$. 

Letting $\mathcal{B}:L^2(\RM)\to L^2([-\pi,\pi];L^2_{\rm per}([0,1])),$ $g\mapsto \check{g}$ denote the Bloch transform,  
we readily see that for our given linearized operator $L$, defined in \eqref{lin}, and $g\in L^2(\RM)$
we have $\mathcal{B}(Lg)(\xi,x)=L_\xi\left[\check{g}(\xi,\cdot)\right](x)$,
hence the associated Bloch operators $L_\xi$ may be viewed as operator-valued 
symbols under $\mathcal{B}$, acting on $L^2_{\rm per}([0,1])$.
Similarly, from the identity $\mathcal{B}\left(e^{tL}g\right)(\xi,x)=\left(e^{tL_\xi}\check g(\xi,\cdot)\right)(x)$, a consequence
of \eqref{defrel}, we find the {\it Bloch solution formula} for the periodic-coefficient operator $L$:
\be\label{fullS}
(S(t) g)(x):=(e^{tL}g)(x) = \int_{-\pi}^{\pi} e^{i\xi x} (e^{tL_\xi}  \check g(\xi, \cdot))(x) d\xi.
\ee
In particular, we see that the Bloch transform $\mathcal{B}$ diagonalizes the periodic coefficient
operator $L$ in the same way that the Fourier transform diagonalizes constant-coefficient operators.

Using the representation formula \eqref{fullS}, bounds on the Bloch solution operator $e^{tL_\xi}$ can be converted 
to bounds on the linearized solution operator $e^{tL}$.  To facilitate these bounds, we notice by the standard Parseval identity
that the rescaled Bloch transform $\sqrt{2\pi}\mathcal{B}$ is an isometry on $L^2(\RM)$, i.e.
\begin{equation}\label{psvl}
\|g\|_{L^2(\RM)}^2=2\pi\int_{-\pi}^\pi\int_0^1|\mathcal{B}(g)(\xi,x)|^2dx~d\xi=2\pi\|\check{g}\|_{L^2([-\pi,\pi];L^2([0,1]))}^2.
\end{equation}
More generally, by interpolating \eqref{psvl} with the triangle inequality, corresponding to the case $q=1$ and $p=\infty$ below,
we obtain the generalized Hausdorff--Young inequality $\|g\|_{L^p(\RR)} \le \| \check g\|_{L^q([-\pi,\pi],L^p([0,1]))}$
for $q\le 2\le p $ and $\frac{1}{p}+\frac{1}{q}=1$, which, by \eqref{Brep}, yields for any $1$-periodic
functions $g(\xi,\cdot)$
\be\label{hy}
\Big\|\int_{-\pi}^{\pi}e^{i\xi \cdot} g(\xi,\cdot)d\xi \Big\|_{L^p(\RR)}
\le \|g\|_{L^q([-\pi,\pi],L^p([0,1]))}
\:\; {\rm for} \:\;
q\le 2\le p \:\; {\rm and } \:\; \frac{1}{p}+\frac{1}{q}=1.\footnote{
Here and elsewhere, we denote $\|g\|_{L^q([-\pi,\pi],L^p([0,1]))}:=\left(\int_{-\pi}^\pi\|g(\xi,\cdot)\|_{L^p([0,1])}^{q}d\xi\right)^{1/q}$.}
\ee
It is from this convenient formulation that we will obtain our linear estimates.

\br\label{nonsharp_rmk}
\textup{
To keep technicalities as low as possible, we have indeed compelled ourselves to prove all our linear estimates using \eqref{hy} alone. 
The only price to pay is that we are thus confined to high-norm estimates, in $W^{k,p}$, $2\le p\le \infty$,
as a consequence of which, due to the details of the nonlinear iteration estimates, 
bound \eqref{refinedest}(i) (thus \eqref{last}(i)) is not expected to be sharp except for $p=2$.
With more work, but in the spirit of the present paper, we expect that one may actually prove a $(1+t)^{-1/2(1-1/p)-1/2+\eta}$ decay ($\eta>0$ arbitrary).
See Remarks 1.7 and 4.3 of \cite{JZ3} for further discussion in the
somewhat simpler setting of the reaction-diffusion case.
}
\er

\br\label{slowmod_Bloch_rmk}
\textup{
Another important property of the Bloch transform that we will 
use repeatedly throughout our proofs is that it is well-behaved with respect to 2-scale analysis 
of a slow modulation \emph{ansatz}. Indeed if $g$ is 1-periodic and $h$ is slow, in the sense that the Fourier transform of $h$ is supported in $[-\pi,\pi]$, then $\mathcal{B}(gh)(\xi,x)=g(x)\widehat{h}(\xi)$. If no slowness assumption is made but still $g$ is 1-periodic, then 
there still holds $\mathcal{B}(gh)(\xi,x)=g(x)\check{h}(\xi,x)$. Mark that the fact that in general high frequencies of $h$ are involved in the low Floquet 
number part of $gh$ will add substantial difficulties to the linear analysis below. 
This nice property that the Bloch transform separates scales may also 
be used to perform two separate change of frames for fast and slow variables through $\check{g}(\xi,x)\mapsto e^{i\xi c_1t}\check{g}(\xi,x-c_2 t)$. 
This feature was used in a crucial way in the analysis of reaction-diffusion systems carried out in \cite{SSSU}, 
where the proof relied strongly on self-similar techniques such as renormalization, requiring the Whitham equation to be essentially reduced to a viscous Burgers equation by going into the frame of its characteristic velocity (called linear group velocity) while keeping the original equation in the co-moving frame of the wave (the one of the phase velocity). This elegant strategy appears to completely break down in the system case considered here, 
for which many linear group velocities are involved. 
}
\er

\subsection{Nonlinear perturbation equations}\label{s:pert}

We now discuss how the introduction of a phase shift affects system \eqref{cons}. 
Following \cite{JZ2,JNRZ1}, we introduce the perturbation variable
\begin{equation} \label{pertvar}
v(x,t)=\tilde{u}(x-\psi(x,t),t) -\bar U(x)
\end{equation}
where $\tilde{u}(x,t)$ satisfies \eqref{cons} and $\psi(x,t)$ is a phase shift to be determined together with $v$. 
We recall the following representation convenient for nonlinear iteration and established in \cite{JZ2}.

\begin{lemma}[\cite{JZ2}]\label{lem:canest}
The nonlinear residual $v$ and phase shift $\psi$ linked by \eqref{pertvar} satisfy
\begin{equation}\label{veq}
\left(\partial_t-L\right)(v+\psi\bar U_x)=\cN,\qquad\textrm{with}\qquad
\cN:=\partial_x \cQ+ \partial_x \cR +\partial_t\cS,
\ee
where
\be\label{eqn:Q}
\cQ:=-\ks\left(f(\bar U+v)-f(\bar U)-df(\bar U)v\right),
\ee
\be\label{eqn:R}
\cR:=- v\psi_t+\ks^2 v_x\frac{\psi_{x}}{1-\psi_x}+ \ks^2\bar U_x\frac{\psi_x^2}{1-\psi_x},
\ee
and
\be\label{eqn:S}
\cS:= v\psi_x.
\ee
\end{lemma}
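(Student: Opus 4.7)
The plan is to prove this by direct computation, following the chain rule and then rearranging so that the nonlinear remainders assemble into the claimed divergence form. Since the result is essentially quoted from \cite{JZ2}, the real content is bookkeeping; I will present the order in which I would organize the calculation.

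First I would set $y = x - \psi(x,t)$ and use the defining identity $\tilde u(y,t) = v(x,t) + \bar U(x)$ to express the $y$-derivatives of $\tilde u$ at $(y,t)$ in terms of $x$-derivatives of $v$ and $\bar U$:
\[
\tilde u_y(y,t) = \frac{v_x + \bar U_x}{1-\psi_x},\qquad \tilde u_t(y,t) = v_t + \psi_t\,\tilde u_y(y,t),
\]
together with the corresponding formulas obtained by differentiating these relations once more in $x$ for $\tilde u_{yy}(y,t)$ and for $f(\tilde u)_y$. Substituting into the equation $\tilde u_t = \ks^2 \tilde u_{yy} - \ks f(\tilde u)_y + \ks\cs\,\tilde u_y$ yields an evolution equation for $v$ alone, in which every occurrence of $1/(1-\psi_x)$ can be split as $1 + \psi_x/(1-\psi_x)$, thereby separating the ``flat'' contribution (the one that will build $Lv$) from a quadratically small correction that will be collected into $\cR$.

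Next I would peel off the linear part. Expanding $f(\bar U + v) = f(\bar U) + A(x)v + \bigl(f(\bar U+v)-f(\bar U)-A v\bigr)$ produces $Lv$ together with the Taylor remainder that is precisely $-\ks^{-1}\partial_x \cQ$. At this stage the remaining nonflat diffusion and convection pieces take the form $\partial_x\!\left(\ks^2 v_x \tfrac{\psi_x}{1-\psi_x}\right)$ plus a collection of terms containing $\bar U_x$, $\bar U_{xx}$ and derivatives of $\psi$. To cope with the latter I would compute $L(\psi\bar U_x)$ using the fact that $L\bar U_x = 0$, which follows by differentiating the profile ODE \eqref{ode}; this leaves only lower order cross terms $2\ks^2\psi_x\bar U_{xx} + \ks^2\psi_{xx}\bar U_x + \ks\psi_x(\cs - A)\bar U_x$, and these are exactly the pieces needed to convert the remaining $\bar U$-dependent terms into the single expression $\partial_x\!\left(\ks^2\bar U_x\,\psi_x^2/(1-\psi_x)\right)$ appearing in $\cR$.

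Finally I would handle the time-like remainder. The transport correction $\psi_t\,\tilde u_y$ contributes $-\psi_t(v_x+\bar U_x)/(1-\psi_x)$ on the right-hand side; the $\bar U_x$ part cancels $\psi_t \bar U_x = \partial_t(\psi\bar U_x)$ that arises from moving $\psi\bar U_x$ into $(\partial_t - L)$, while the $v_x$ part together with expansion in $\psi_x$ produces $-\psi_t v_x + \psi_t v_x\psi_x/(1-\psi_x)$. Using the mixed-derivative identity
\[
-\psi_t v_x = \partial_t(v\psi_x) - \partial_x(v\psi_t) + v_x\psi_t - v\psi_{xt} + \partial_x(v\psi_t) - \partial_t(v\psi_x),
\]
or more simply the rearrangement $\partial_x(v\psi_t) = \partial_t(v\psi_x) + v_x\psi_t - \psi_x v_t$, I can trade $-\psi_t v_x$ for $\partial_t(v\psi_x) - \partial_x(v\psi_t)$ modulo a term $-v\psi_t$ which is absorbed into $\cR$. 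This produces exactly $\partial_t \cS$ with $\cS = v\psi_x$ and $\partial_x(-v\psi_t)$ inside $\cR$, completing the identification.

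The main obstacle is purely organizational: keeping track of the expansions $1/(1-\psi_x) = 1 + \psi_x/(1-\psi_x)$ and of sign conventions so that every nondivergence piece either cancels against a corresponding term from $L(\psi\bar U_x)$ (via $L\bar U_x=0$) or is converted via commutation of $\partial_t$ and $\partial_x$ into a genuine $\partial_t\cS$ term. No single manipulation is delicate, but the accounting must be exact in order for the only surviving non-$L$ contributions to be precisely $\partial_x\cQ$, $\partial_x\cR$, and $\partial_t\cS$ as stated.
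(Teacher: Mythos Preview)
The paper does not give a proof of this lemma; it is imported wholesale from \cite{JZ2}, so there is no in-paper argument to compare against. Your overall strategy---chain rule through $y=x-\psi(x,t)$, split $1/(1-\psi_x)=1+\psi_x/(1-\psi_x)$, exploit $L\bar U_x=0$, and commute $\partial_t,\partial_x$ to manufacture $\partial_t\cS$---is the standard one and is what \cite{JZ2} does.

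That said, the bookkeeping in your final paragraph is garbled. The displayed ``mixed-derivative identity'' is a tautology (everything on the right cancels except $v_x\psi_t-v\psi_{xt}$, which is not $-\psi_t v_x$), and even your corrected identity $\partial_x(v\psi_t)=\partial_t(v\psi_x)+v_x\psi_t-\psi_x v_t$ gives a remainder $-\psi_x v_t$, not the $-v\psi_t$ you claim to absorb into $\cR$. What actually happens is that the transport correction $\psi_t\tilde u_y$ contributes, after expanding $1/(1-\psi_x)$, a term $\psi_t(v_x+\bar U_x)$ plus a quadratic piece; the $\psi_t\bar U_x$ cancels $\partial_t(\psi\bar U_x)$ as you say, but the $\psi_t v_x$ term is \emph{not} traded away via commutation---rather, the commutation identity is used to rewrite $v_t\psi_x$ (which arises when you multiply the $v_t$ equation through by $(1-\psi_x)$ or equivalently from the Jacobian of the change of variables) as $\partial_t(v\psi_x)-\partial_x(v\psi_t)+v_x\psi_t$, and it is this that produces $\partial_t\cS$ and the $-v\psi_t$ inside $\cR$. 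The extra $v_x\psi_t$ generated here cancels against part of the transport correction. So the ingredients you list are all present, but the flow of cancellations in your last paragraph is misrouted; if you tried to execute it as written you would be left with a stray $\psi_x v_t$ (or equivalently a stray $v_x\psi_t$) with no home.
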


\br\label{goodchoice}
\textup{
As noted in \cite{JZ2}, the advantage of \eqref{pertvar},
$
\tilde u(\cdot,t)\ =\ (\bar U+v(\cdot,t))\circ\Psi(\cdot,t)
$
(with $\Psi(\cdot,t)=(\Id-\psi(\cdot,t))^{-1}$) over the (probably more natural) choice 
$
\tilde u(\cdot,t)\ =\ \bar U\circ\Psi(\cdot,t)+\tilde v(\cdot,t)
$
suggested by \eqref{wref}, is that the phase shift enters the equation only through commutators between composition with $\Psi$ and differentiation, 
hence only gradients of $\psi$ appear in the source terms $\cQ,\cR,\cS$ on the right-hand side of \eqref{veq}.
By contrast, the corresponding terms for choice $\tilde v$ would involve also terms of order $|\psi||\psi_x|$ 
including the nondecaying phase $\psi$ itself, thus making the decay too slow 
for our nonlinear iteration to close. This observation is by now classical 
in the stability analysis of traveling patterns;  see the related observations of \cite{DSSS} regarding stability of periodic reaction-diffusion waves, 
and \cite{Z1,Z2,MZ,HoZ} and \cite{TZ1,TZ2,TZ3,Z3}\footnote{
Specifically, \cite[Section 3]{TZ1} and
\cite[Section 2.2.1]{TZ2} (group invariance and uniqueness),
\cite[Section 3]{Z3} (translation-invariant center--stable manifold),
and \cite[Theorem 2.2.0]{TZ3} (Nash--Moser uniqueness theorem).
} 
for similar, earlier, observations in the context of viscous shock stability and bifurcation.
}

\textup{
However, notice also the more subtle aspect of the decomposition \ref{veq}
that it groups within the term $(\partial_t-L)(\psi \bar U_x)$
appearing on the left-hand side
the linear order source terms $\psi_t \bar U_x$ and $\psi_x \bar U_{xx}$
that are individually too large to handle in our later nonlinear iteration;
the term $(\partial_t-L)(\psi \bar U_x)$ is then used to cancel instantaneous
phase-modulations arising in the solution of the linearized equations,
as described just below.
This approach to detecting nonlinear cancellation
originates from the study of stability of
viscous shock solutions of systems of parabolic conservation laws
(see, e.g., \cite[Eq. (2.30)]{Z1}, \cite{Z2}, and especially 
\cite[Eq. (5.23), Cor. 5.4, p. 453]{HoZ}) and is fundamentally different from those introduced in the 
reaction-diffusion setting in \cite{DSSS,SSSU} 
based on normal forms and successive reductions/renormalizations,
which, as discussed in Remark \ref{slowmod_Bloch_rmk}(ii), appear
unlikely to work in the present case. Indeed, we view this distinction
as the key to the successful treatment in \cite{JZ2,JZ3} 
of nonlinear modulational stability in the presence of conserved quantities.
}
\er

{\it Isolation of the phase.} 
To motivate the more technical analysis of Section \ref{s:stabg},
we describe in informal fashion the way that we determine
the phase $\psi$, separating out principal nonlinear behavior.
Using Duhamel's formula together with \eqref{veq}, we can write an integral equation for 
$v$ as
\be\label{tied}
v(\cdot,t)+\psi(\cdot,t)\bar{U}_x\ =\ e^{tL}(v(\cdot,0)+\psi(\cdot,0)\bar{U}_x)+\int_0^t e^{(t-s)L}\mathcal{N}(s)ds,
\ee
with initial data $\psi(\cdot,0)=h_0$, $v(\cdot,0)=\tilde u_0\circ(\Id-h_0)-\bar U$. 
Noting, by the bounds of Section~\ref{s:stabg},
that $e^{tL}=\bar U_x e_{n+1}\cdot s^{\rm p}(t) +\tilde S(t)$, where
$s^{\rm p}(t)$ is some operator sending $\R^n$-valued functions into $\R^{n+1}$-valued functions\footnote{The first space is the one of $U$-values, the second one is the one of modulation parameters $(\cM,\kappa)$.} 
($e_{n+1}$ denoting the $(n+1)$th standard basis element)
and $\tilde S(t)$ is a faster-decaying residual,
that is, that the principal part of the linear solution operator
is a linear phase-modulation consisting of $e_{n+1}\cdot s^{\rm p}(t)$
times the instantaneous shift $\bar U_x$, we remove this principal part by defining 
implicitly
\be\label{1psi}
\psi(t)\sim
 e_{n+1}\cdot s^{\rm p}(t)(v(\cdot,0)+ \psi(\cdot,0)\bar{U}_x)
+\int_0^t e_{n+1}\cdot s^{\rm p}(t-s)\mathcal{N}(s)ds,
\ee
where the $\sim$ here indicates equality for $t\geq 1$. This gives an expression
\ba\label{1v}
v(t)&\sim
\tilde S(t)(v(\cdot,0)+ \psi(\cdot,0)\bar{U}_x)
+\int_0^t \tilde S(t-s)\mathcal{N}(s)ds
\ea
for $v$ in which no $s^{\rm p}(t)$ terms appear, closing the system in $(\psi,v)$.

This simple prescription follows the principle that we should
choose the nonlinear phase so as to remove from the linear
description of the residual $v$ all contributions representing
linearized, or ``instantaneous'' phase modulation, at least away
from the initial layer $0\leq t\leq 1$.
On this latter time interval, where we are constrained
by the restriction $\psi(\cdot, 0)=h_0$,
we instead interpolate between
the right-hand side of \eqref{1psi} and the initial data $\psi(\cdot,0)=h_0$,
as described in \eqref{vimplicit}--\eqref{closedg}.

\subsection{Nonlinear damping estimate}\label{s:damping}
To complement the linear bounds, established below, that form the core of the proof, we will use the following damping-type bound established
by energy estimate in \cite{JZ2}, useful in controlling higher derivatives by lower ones, enabling us to close a nonlinear iteration with decay rates 
of the lower derivatives.

\begin{proposition}[\cite{JZ2}]\label{damping}
Assuming (H1)-(H3), there exist positive constants $\theta$, $C$ and $\varepsilon$ such that if $v$ and $\psi$ solve \eqref{veq} on $[0,T]$ for some $T>0$ and
$$
\sup_{t\in[0,T]}\|(v,\psi_x)(t)\|_{H^K(\RM)}+\sup_{t\in[0,T]}\|\psi_t(t)\|_{H^{K-1}(\RM)}\leq\varepsilon
$$
then, for all $0\leq t\leq T$,
\be\label{Ebds}
\|v(t)\|^2_{H^K(\RM)}
\leq Ce^{-\theta t}
\|v(0)\|_{H^K(\RM)}^2+
C\int_0^t e^{-\theta(t-s)}
\left(\|v(s)\|^2_{L^2(\RM)}+
\|(\psi_t,\psi_x,\psi_{xx})(s)\|_{H^{K-1}(\RM)}^2\right)ds.
\ee
\end{proposition}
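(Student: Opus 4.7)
The plan is a standard high-order energy estimate exploiting the parabolicity of $L$: at top order, dissipation dominates and feeds back into the full $H^K$ norm via a telescoping trick, the price being an $\|v\|_{L^2}^2$ compensator and a linear forcing controlled by the gradients of $\psi$.

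First I would reformulate \eqref{veq} as a closed evolution equation for $v$ alone. Since the time derivative $\partial_t\cS=\partial_t(v\psi_x)=v_t\psi_x+v\psi_{xt}$ hidden in $\cN$ couples $v_t$ to $\psi_x$, moving $v_t\psi_x$ to the left hand side yields
\be\nn
(1-\psi_x)\,\partial_t v \;=\; Lv \;+\; \bigl(L(\psi\bar U_x)-\partial_t(\psi\bar U_x)\bigr) \;+\; \partial_x\cQ \;+\; \partial_x\cR \;+\; v\,\psi_{xt}.
\ee
Under the smallness hypothesis on $\|\psi_x\|_{L^\infty}$, division by $1-\psi_x$ is legitimate and produces an evolution equation $v_t=Lv+\widetilde F$, where $\widetilde F$ depends linearly on $(\psi_t,\psi_x,\psi_{xx})$ with localized coefficients built from $\bar U$, and quadratically on $(v,\psi_x)$ through $\cQ,\cR,\cS$ and the inverse of $1-\psi_x$.

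Next, for each $0\le \ell \le K$, I would differentiate $\ell$ times in $x$, pair the result with $\partial_x^\ell v$ in $L^2(\RM)$, and integrate by parts. The principal term gives $-\ks^2\,\|\partial_x^{\ell+1}v\|_{L^2}^2$; the first-order convective pieces $\ks\cs\,\partial_x v-\ks(Av)_x$ and the commutators with $\partial_x^\ell$ are bounded by Cauchy--Schwarz/Young, and may be absorbed into the dissipation up to a multiple of $\|v\|_{H^\ell}^2$ thanks to the smoothness of $\bar U$. Moser-type inequalities --- applicable because $K\ge 3$ gives the Sobolev embedding $H^K\hookrightarrow W^{2,\infty}$ --- handle the nonlinear contributions from $\cQ$, $\cR$, and the factor $(1-\psi_x)^{-1}$: each is at least quadratic and, by smallness, contributes a small fraction of $\|\partial_x^{\ell+1}v\|_{L^2}^2$ plus a controllable remainder bounded by $\|v\|_{L^2}^2+\|(\psi_t,\psi_x,\psi_{xx})\|_{H^{K-1}}^2$. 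Summing over $0\le\ell\le K$, the telescoping identity
\be\nn
\sum_{\ell=0}^K \|\partial_x^{\ell+1}v\|_{L^2(\RM)}^2 \;=\; \|v\|_{H^K(\RM)}^2 \;-\; \|v\|_{L^2(\RM)}^2 \;+\; \|\partial_x^{K+1}v\|_{L^2(\RM)}^2
\ee
converts the parabolic dissipation into a damping of the full $H^K$ norm (the extra $\|\partial_x^{K+1}v\|_{L^2}^2$ being discarded as a bonus), at the cost of an $\|v\|_{L^2}^2$ compensator. Collecting, one arrives at a differential inequality
\be\nn
\frac{d}{dt}\,\|v(t)\|_{H^K(\RM)}^2 \;\le\; -\theta\,\|v(t)\|_{H^K(\RM)}^2 \;+\; C\bigl(\|v(t)\|_{L^2(\RM)}^2 \;+\; \|(\psi_t,\psi_x,\psi_{xx})(t)\|_{H^{K-1}(\RM)}^2\bigr),
\ee
for some $\theta>0$ uniform on $[0,T]$. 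Gronwall's inequality then yields \eqref{Ebds}.

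The main obstacle is accounting cleanly for the $\partial_t\cS$ contribution: it couples $v_t$ to $\psi_x$ at the top order and at first glance threatens the schematic form $v_t=Lv+\text{lower order}$ on which the whole energy argument rests. The inversion of $1-\psi_x$ neutralizes this coupling precisely because of smallness of $\|\psi_x\|_{L^\infty}$. A second subtlety is a careful derivative count: all nonlinear remainders must be closeable by $\|v\|_{L^2}^2+\|(\psi_t,\psi_x,\psi_{xx})\|_{H^{K-1}}^2$, with \emph{no loss} of a derivative on $\psi$; the assumption $K\ge 3$ is exactly what ensures the relevant Sobolev algebra/Moser estimates close with the stated right-hand side.
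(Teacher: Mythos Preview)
Your proposal is correct and follows essentially the same approach as the paper: rewrite \eqref{veq} so that the $(1-\psi_x)$ factor sits on $v_t$, perform an $H^K$ energy estimate to extract parabolic dissipation, convert dissipation to damping of the full $H^K$ norm modulo an $\|v\|_{L^2}^2$ compensator, and close by Gronwall. The only cosmetic differences are that the paper pairs the equation directly against the weighted test function $\sum_{j=0}^K (-1)^j\partial_x^{2j}v/(1-\psi_x)$ rather than first dividing by $(1-\psi_x)$, and converts top-order dissipation $\|\partial_x^{K+1}v\|_{L^2}^2$ into $H^K$ damping via the Sobolev interpolation inequality $\|v\|_{H^K}^2\le a^{-1}\|\partial_x^{K+1}v\|_{L^2}^2+a\|v\|_{L^2}^2$ rather than your telescoping sum; both routes yield the same differential inequality.
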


\begin{proof}(from \cite{JZ2})
Rewriting \eqref{veq} as
\be\label{vperteq2}
(1-\psi_x) v_t-\ks^2v_{xx}=\ks\cs v_x-\psi_t(\bar U_x+v_x)-\ks(f(\bar U+v)-f(\bar U))_x
+\ks\left(\frac{\ks\psi_x}{1-\psi_x}(\bar U_x+v_x)\right)_x,
\ee
taking the $L^2(\RM)$ inner product against
$\sum_{j=0}^K \dfrac{(-1)^{j}\d_x^{2j}v}{1-\psi_x}$,
integrating by parts, and rearranging, we obtain
\[
\frac{d}{dt}
\|v\|_{H^K(\RM)}^2(t)
 \leq -\tilde{\theta} \|\partial_x^{K+1} v(t)\|_{L^2(\RM)}^2 +
\tilde C\left( \|v(t)\|_{H^K(\RM)}^2
+\|(\psi_t,\psi_x,\psi_{xx})(t)\|_{H^{K-1}(\RM)}^2 \right),
\]
for some positive $\tilde C$ and $\tilde\theta$, so long as $\|(v,v_x,\psi_t,\psi_x,\psi_{xx})(t)\|_{H^{K-1}(\RM)}$ remains sufficiently small. Sobolev interpolation
$\|g\|_{H^K(\RM)}^2\leq a^{-1}\|\partial_x^{K+1} g\|_{L^2(\RM)}^2+a\| g\|_{L^2(\RM)}^2$
gives, then, for $a>0$ sufficiently large,
\[
\frac{d}{dt}\|v\|_{H^K(\RM)}^2(t) \leq -\theta \|v(t)\|_{H^K(\RM)}^2 +
C\left( \|v(t)\|_{L^2(\RM)}^2+\|(\psi_t, \psi_x)(t)\|_{H^K(\RM)}^2 \right),
\]
from which \eqref{Ebds} follows by Gronwall's inequality.
See \cite{JZ2} for further details.
\end{proof}

\br\label{damping_rmk}
\textup{
It should be pointed out that one technical feature of our nonlinear iteration is the loss of derivatives of the nonlinear perturbation variable $v$, i.e.
our iteration argument shall control $L^p$ norms of $v$ in terms of $H^s$ norms of $v$ and gradients of $\psi$. This loss of derivatives is compensated by the above ``nonlinear damping" estimate, which is due to the (here, total) parabolicity of the governing equations. Of course, here, other strategies would be available (such as maximal regularity). The advantage of the above energy estimate is that it generalizes to partially parabolic systems (through the introduction of Friedrichs symmetrizers 
with Kawashima compensators;
see \cite{JZN} for such an analysis in the context of the Saint-Venant system).
In contrast, 
the fact that the Bloch transform involves only bounded Floquet numbers translates immediately into the fact that local parameters such as $\psi$ (or $M$) 
should be ``slow,'' in the sense of Remark \ref{slowmod_Bloch_rmk},
so that arbitrarily many derivatives are gained at linear level.
}
\er

\subsection{Inverse modulation bounds}
An important technical detail \cite{DSSS,JNRZ1,JNRZ2}, and a new issue beyond the viscous shock wave case mentioned
in Remark \ref{goodchoice}, is the relation between quantities
$\|\tilde u(\cdot- \psi(\cdot,t), t)
-U^{\Ms+M(\cdot,t),\ks/(1-\psi_x(\cdot,t))}(\,\cdot\,)\|_{L^p(\RM)}$
conveniently estimable by our analysis and the corresponding quantities
$\|\tilde u(\cdot,t)
-U^{\cM(\cdot ,t), \kappa(\cdot,t)}(\Psi(\cdot,t) )\|_{L^p(\RM)}$
arising through formal Whitham approximation. With this in mind, we remark that the following (sharp) estimate shows that
$
\|F(\cdot)-G(\cdot + \phi(\cdot))\|_{L^p(\RM)}
$
is essentially equivalent to
$\|F(\cdot- \phi(\cdot))-G(\cdot)\|_{L^p(\RM)}$
plus $\|\phi\|_{L^\infty(\RM)}\|\phi_x\|_{L^p(\RM)}$.

\begin{lemma}\label{switchlem}
Let $\phi$ be bounded with $\|\phi_x\|_{L^\infty(\RM)}<1$. Then $\Id-\phi$ is invertible and
\be\label{switcheq}
\begin{array}{rcl}
\|F-G\circ(\Id-\phi)^{-1}\|_{L^p(\RM)}
&\le&(1+\|\phi_x\|_{L^\infty(\RM)})^{\frac1p}\quad
\|F\circ(\Id-\phi)-G\|_{L^p(\RM)}\\
\|F-G\circ(\Id+\phi)\|_{L^p(\RM)}
&\le&(1+\|\phi_x\|_{L^\infty(\RM)})^{\frac1p}\quad
\|F\circ(\Id-\phi)-G\|_{L^p(\RM)}\\
&&+\|G_x\|_{L^\infty(\RM)}(1+\|\phi_x\|_{L^\infty(\RM)})^{\frac1p}\|\phi\|_{L^\infty(\RM)}\|\phi_x\|_{L^p(\RM)}.
\end{array}
\ee
\end{lemma}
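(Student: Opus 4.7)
Invertibility of $\Id-\phi$ is immediate: since $(\Id-\phi)'=1-\phi_x$ is bounded below by $1-\|\phi_x\|_{L^\infty(\RM)}>0$ and $\phi$ is bounded, $\Id-\phi$ is a proper strictly increasing $C^1$-diffeomorphism of $\R$.

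The first inequality will just be a direct change of variables. I set $y=x-\phi(x)$, so $dy=(1-\phi_x(x))\,dx$ with Jacobian $|1-\phi_x|\le 1+\|\phi_x\|_{L^\infty(\RM)}$, which gives
\[
\|F-G\circ(\Id-\phi)^{-1}\|_{L^p(\RM)}^p
=\int_\RM |F(x-\phi(x))-G(x)|^p (1-\phi_x(x))\,dx
\le (1+\|\phi_x\|_{L^\infty(\RM)})\,\|F\circ(\Id-\phi)-G\|_{L^p(\RM)}^p,
\]
and taking $p$th roots yields the claim.

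For the second inequality, the plan is to insert $G\circ(\Id-\phi)^{-1}$ as an intermediate term and apply the triangle inequality. The first piece $\|F-G\circ(\Id-\phi)^{-1}\|_{L^p(\RM)}$ is controlled by what was just proved; the task is to estimate the remainder $\|G\circ(\Id-\phi)^{-1}-G\circ(\Id+\phi)\|_{L^p(\RM)}$. For this I will use the fact that by definition $(\Id-\phi)^{-1}(y)=y+\phi((\Id-\phi)^{-1}(y))$, so that
\[
\bigl|G((\Id-\phi)^{-1}(y))-G(y+\phi(y))\bigr|
\le \|G_x\|_{L^\infty(\RM)}\,\bigl|\phi((\Id-\phi)^{-1}(y))-\phi(y)\bigr|,
\]
by the mean value theorem. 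Then I change variables $y=z-\phi(z)$ exactly as in the first estimate to reduce matters to bounding $\int_\RM|\phi(z)-\phi(z-\phi(z))|^p\,dz$ up to the same Jacobian constant $(1+\|\phi_x\|_{L^\infty(\RM)})$.

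The main remaining (and most delicate) step is the key integral bound
\[
\int_\RM|\phi(z)-\phi(z-\phi(z))|^p\,dz\ \le\ \|\phi\|_{L^\infty(\RM)}^p\,\|\phi_x\|_{L^p(\RM)}^p.
\]
Here I plan to write $\phi(z)-\phi(z-\phi(z))=\int_{z-\phi(z)}^{z}\phi_x(s)\,ds$ and apply H\"older in the form $\bigl|\int_a^b \phi_x\bigr|^p\le |b-a|^{p-1}\int_{\min(a,b)}^{\max(a,b)}|\phi_x|^p$, then swap the order of integration by Fubini. The computation that must be checked carefully is that for each fixed $s$ the fiber $\{z\in\RM:s\text{ lies between }z\text{ and }z-\phi(z)\}$ is exactly the closed interval with endpoints $s$ and $(\Id-\phi)^{-1}(s)$ (using monotonicity of $\Id-\phi$), so it has measure $|(\Id-\phi)^{-1}(s)-s|=|\phi((\Id-\phi)^{-1}(s))|\le\|\phi\|_{L^\infty(\RM)}$; this is the step where sign-changes of $\phi$ need a little care but cause no real trouble. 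Combining, the remainder is bounded by $\|G_x\|_{L^\infty(\RM)}(1+\|\phi_x\|_{L^\infty(\RM)})^{1/p}\|\phi\|_{L^\infty(\RM)}\|\phi_x\|_{L^p(\RM)}$, and the triangle inequality then yields the second estimate of \eqref{switcheq}.
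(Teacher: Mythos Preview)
Your proof is correct and follows the same overall architecture as the paper's: the first inequality by change of variables, the second by inserting $G\circ(\Id-\phi)^{-1}$, applying the mean value theorem, and reducing to an estimate on $\|\tilde\phi-\phi\|_{L^p}$ where $\Id+\tilde\phi=(\Id-\phi)^{-1}$.

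The one substantive difference is in how you handle the key integral. The paper writes $\tilde\phi(y)-\phi(y)=\tilde\phi(y)\int_0^1\phi_x(y+t\tilde\phi(y))\,dt$, applies Jensen, and then for each $t\in[0,1]$ performs a change of variables through the diffeomorphism $\Id+t\tilde\phi$, whose Jacobian is bounded below by $(1+\|\phi_x\|_{L^\infty})^{-1}$. You instead change variables once to $z$, write $\phi(z)-\phi(z-\phi(z))=\int_{z-\phi(z)}^z\phi_x$, apply H\"older, and swap the order of integration; your observation that the fiber $\{z:\ s\ \text{lies between}\ z\ \text{and}\ z-\phi(z)\}$ is exactly the interval with endpoints $s$ and $(\Id-\phi)^{-1}(s)$, hence of length $|\tilde\phi(s)|\le\|\phi\|_{L^\infty}$, is correct (it follows from monotonicity of both $\Id$ and $\Id-\phi$, as you note). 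Your route is arguably a bit cleaner since it avoids checking invertibility and Jacobian bounds for the whole family $\Id+t\tilde\phi$; the paper's route has the minor advantage of making the appearance of the constant $(1+\|\phi_x\|_{L^\infty})^{1/p}$ more transparent. Both yield exactly the same final bound.
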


\begin{proof}
By the implicit function theorem and boundedness of $\phi$, the map $\Id-\phi$ is invertible. Let us write its inverse $\Id+\tilde{\phi}$. Since the Jacobian of $\Id+\tilde\phi$ is bounded below by $(1+\|\phi_x\|_{L^\infty(\RM)})^{-1}$, we remark that
$$
\|[F\circ(\Id-\phi)-G]\circ(\Id+\tilde\phi)\|_{L^p(\RM)}\ \leq\ (1+\|\phi_x\|_{L^\infty(\RM)})^{\frac1p}
\|F\circ(\Id-\phi)-G\|_{L^p(\RM)}
$$
and the first part of \eqref{switcheq} follows. We then split $F-G\circ (\Id+\phi)$ as
$$
F-G\circ(\Id+\phi)\ =\ [F\circ(\Id-\phi)-G]\circ(\Id+\tilde\phi)\ +\ G\circ(\Id+\tilde\phi)-G\circ(\Id+\phi).
$$
Now the intermediate value theorem yields
$$
\|G\circ(\Id+\tilde\phi)-G\circ(\Id+\phi)\|_{L^p(\RM)}\ \leq\ \|G_x\|_{L^\infty(\RM)}
\|\tilde\phi-\phi\|_{L^p(\RM)}.
$$
But, from equality $\tilde\phi=\phi\circ(\Id+\tilde\phi)$ we infer
$$
\tilde{\phi}(x)-\phi(x)\ =\ \tilde\phi(x)\ \int_0^1\ \phi_x(x+t\tilde\phi(x))\,dt
$$
from which H\"older's inequality yields
$$
\|\tilde{\phi}-\phi\|_{L^p(\RM)}^p\ \leq\ \|\tilde\phi\|_{L^\infty(\RM)}^p\ \int_0^1\ \|\phi_x\circ(\Id+t\tilde\phi)\|_{L^p(\RM)}^p\,dt.
$$
This concludes the proof since $\|\tilde\phi\|_{L^\infty(\RM)}\leq \|\phi\|_{L^\infty(\RM)}$ and, for $t\in[0,1]$, $\Id+t\tilde\phi$ is invertible with a Jacobian bounded below by $(1+\|\phi_x\|_{L^\infty(\RM)})^{-1}$. 
See \cite[Remark 1.4]{JNRZ2} for related comments.
\end{proof}

\br\label{needsmall}
\textup{
Note that the quantity $\tilde u(\cdot-\psi(\cdot,t),t)$
estimated in our analysis does not necessarily control $\tilde u(\cdot, t)$
unless $\ks\psi_x$ is small in $L^\infty$ (local invertibility of $\Id-\psi$)
and bounded in $L^1$ (global invertibility).
{\it Thus, our approach} (and likewise that of \cite{JNRZ1,JNRZ2,DSSS,SSSU})
{\it is inherently a small-variation analysis in wave number,}
whether or not the Whitham system admits large-variation
solutions.\footnote{
In particular, for the three example systems considered in the introduction, the Whitham system does
have large-amplitude solutions for data merely bounded in
$L^1\cap L^2$ since it has an associated convex entropy; see \cite{HoS}.}}

\textup{
However, if the Whitham system has an associated convex entropy \cite{La,Sm},
then, for $\kappa-\ks\sim \ks\psi_x$ initially small in $L^\infty$ and bounded in $L^1$
(hence small in $L^2$), by the results of \cite{HoS},
it has a solution that remains small in $L^\infty$, and decays
as $(1+t)^{-\frac12(1-1/p)}$ in all $L^p$, whence, combining the
stability and behavior arguments of Sections \ref{s:stabg}--\ref{s:gbehavior}
and closing an iteration for $z$ 
(the refined perturbation variable defined in \eqref{pertvarrg}) 
instead of $v$, it might be possible
to relax the assumption $\|\psi_x(0)\|_{L^1\cap L^\infty}\ll 1$
to $\|\psi_x(0)\|_{L^1}=\cO(1)$ and $\|\psi_x(0)\|_{L^\infty}\ll 1$, allowing large variations in phase.
In the absence of a convex entropy, one might instead
assume closeness of $\psi_x(0)$ in $L^1\cap H^s$ to a special ``diffusion wave'' solution (in the scalar case, a distorted
Gaussian obtained by Hopf--Cole transformation; in the system case, a more complicated coupled superposition of such waves \cite{LZ})
that has evolved sufficiently long to be of small gradient, again relaxing slightly the restriction
that $\psi$ be of small initial variation. These would be interesting directions for future investigation.
}
\er

\section{Stability}\label{s:stabg}

In this section, we prove Theorem~\ref{oldmain}. In particular, to separate further \eqref{tied}, we first need a precise spectral analysis (proving Proposition~\ref{whiteig} 
along the way) 
that will allow for a separation of $e^{tL}$ into a part aligned with $\bar U_x$ plus a faster-decaying term. 
Throughout this analysis, we shall often refer to algebraic relations obtained from the profile equation \eqref{ode} and stored in Appendix~\ref{algebraic}.

\subsection{Spectral analysis and nonlinear decomposition}\label{s:genprep}

\begin{proof}[Proof of Proposition \ref{whiteig}]
By \eqref{alg_order1}, one may choose $(\d_{M_1}\bar U,\dots,\d_{M_n}\bar U,\bar U')$ as a right basis for $\Sigma_0$, and $(e_1,\dots,e_n,\bar u^{adj})$ as 
the dual left basis, where $e_j$ denotes the constant function equal to the $j$th standard Euclidean basis element, and $\bar u^{adj}$ denotes a generalized zero eigenfunction of the adjoint operator $L_0^*=\ks^2\partial_x^2+\ks(df(\bar U)-\cs)\partial_x$ such that $\langle\bar u^{adj},\partial_M\bar U\rangle=0$ and $\langle\bar u^{adj},\bar U'\rangle=1$. By standard spectral perturbation theory \cite{Ka} we may build spectral projectors and appropriately extend
(locally near $\xi=0$)
these dual bases in an analytic way into dual right and left bases $\{q_j(\xi)\}_{j=1}^{n+1}$
and $\{\tilde q_j(\xi)\}_{j=1}^{n+1}$ associated to the spectrum of $L_\xi$ in some
fixed neighborhood of the origin. This spectrum is then precisely the one of the matrix
\be\label{Lambda_xi}
\Lambda_\xi\ =\ \left(\langle \tilde q_j(\xi),L_\xi q_l(\xi) 
\rangle\right)_{j,l},
\ee
which we expand as $\xi\to 0$ as
$$
\Lambda_\xi\ =\ \Lambda_0+(i\ks\xi)\Lambda^{(1)}+(i\ks\xi)^2\Lambda^{(2)}+\cO(\xi^3).
$$
We expand also $L_\xi=L_0+(i\ks\xi)L^{(1)}+(i\ks\xi)^2L^{(2)}$; see \eqref{alg_L} for definitions of the $L^{(j)}$.

Note that, replacing, if necessary, simultaneously $q_{n+1}(\xi)$ with 
$$
q_{n+1}(\xi)-\xi\sum_{j=1}^n\langle\tilde q_j(0),\d_\xi q_{n+1}(0)\rangle\ q_j(\xi)
$$
and, for $j\neq n+1$, $\tilde q_j(\xi)$ with $\tilde q_j(\xi)+\xi\langle\tilde q_j(0),\d_\xi q_{n+1}(0)\rangle\ \tilde q_{n+1}(\xi)$, we may assume 
without loss of generality that, for $j\neq n+1$,
$$
\langle\tilde q_j(0),\d_\xi q_{n+1}(0)\rangle =0,\qquad \langle\d_\xi\tilde q_j(0),q_{n+1}(0)\rangle =0,
$$
the second inequality stemming from the first by expanding to first order in $\xi$ the duality relation $\langle\tilde q_j(\xi), q_{n+1}(\xi)\rangle=0$. Now note also that since $L_0 q_{n+1}(0)=0$, expanding to first order in $\xi$ the fact that $L_\xi q_{n+1}(\xi)$ lies in $\Sigma_\xi:=\Span\{q_j(\xi)\}_{j=1}^{n+1}$ the critical space of $L_\xi$, we find that $L_0\d_\xi q_{n+1}(0)+(i\ks)L^{(1)}q_{n+1}(0)$ lies in $\Sigma_0$, the generalized kernel of $L_0$. Using now \eqref{alg_order1}(iii) yields that $L_0(\d_\xi q_{n+1}(0)-(i\ks)\d_kU_{|(\Ms,\ks)})$ lies also in the generalized kernel of $L_0$,
thus so does $\d_\xi q_{n+1}(0)-(i\ks)\d_kU_{|(\Ms,\ks)}$. Orthogonality 
relations from above and \eqref{alg_order1}(iii) lead then to $\d_\xi q_{n+1}(0)-(i\ks)\d_kU_{|(\Ms,\ks)}\in\CM\bar U'$.

The above preparation yields the following representations for $\Lambda_0$ and the first- and second-order correctors
of the matrix $\Lambda_\xi$:
%
$$
\begin{aligned}
\Lambda_0&=\bp 0_{n\times n}&0_{n\times 1}\\ \langle\tilde q_{n+1}(0),L_0 q_l(0)\rangle&0\ep=\bp 0_{n\times n}&0_{n\times 1}\\\d_M\omega|_{(\Ms,\ks)}&0\ep,\\[1em]
\Lambda^{(1)}&=\bp \frac{1}{i\ks}\langle \d_\xi\tilde q_j(0),L_0 q_l(0)\rangle+\langle\tilde q_j(0),L^{(1)}q_l(0)\rangle& 0_{n\times 1}\\ * & \frac{1}{i\ks}\langle \tilde q_{n+1}(0),L_0\d_\xi q_{n+1}(0)\rangle+\langle\tilde q_{n+1}(0),L^{(1)}q_{n+1}(0)\rangle\ep\\
&=\bp -\left((\d_MF)|_{(\Ms,\ks)}-\cs\Id\right)& 0_{n\times 1}\\ * &-\ks(\d_kc)|_{(\Ms,\ks)}\ep,\\
\Lambda^{(2)}&=\bp *&\frac{1}{(i\ks)^2}\langle \d_\xi\tilde q_j(0),L_0\d_\xi q_{n+1}(0)\rangle
+\frac{1}{i\ks}\langle\d_\xi\tilde q_j(0),L^{(1)}q_{n+1}(0)\rangle\\
&+\frac{1}{i\ks}\langle\tilde q_j(0),L^{(1)}\d_\xi q_{n+1}(0)\rangle+\langle\tilde q_j(0),L^{(2)}q_{n+1}(0)\rangle\\ * & * \ep=\bp *&-(\d_k F)|_{(\Ms,\ks)}\\ * & * \ep.\\
\end{aligned}
$$
Therefore, there is no loss of regularity in $\xi$ under the scaled similarity transformation
\be\label{sim}
\tilde\Lambda_\xi\ =\ \frac{1}{i\ks\xi}\bp \Id_{n\times n}&0_{n\times 1}\\0_{1\times n}& i\ks\xi\ep\ \Lambda_\xi\
\bp\Id_{n\times n}& 0_{n\times 1}\\ 0_{1\times n} &(i\ks\xi)^{-1}\ep
\ee
and
$$
\tilde\Lambda_0\ =\ \bp-\left((\d_MF)|_{(\Ms,\ks)}-\cs\Id\right)&-(\d_k F)|_{(\Ms,\ks)}\\
-\ks(\d_Mc)|_{(\Ms,\ks)}&-\ks(\d_kc)|_{(\Ms,\ks)}
\ep.
$$
This gives the result in a straightforward way. For omitted details, we point to similar computations in \cite{NR1,NR2,JZB}. Recall also that this result is proved 
at spectral level (i.e., not including matrix expansion or eigenvector 
information)
by an Evans function approach in \cite{Se,OZ3}.
\end{proof}

The former proof provides more than stated in Proposition \ref{whiteig}. We collect supplementary information in the following lemma.

\bpr[\cite{NR2}]\label{linspecg}
Assuming (H1)--(H3) and (D1)--(D3), there exist $\eps_0>0$, $\xi_0\in(0,\pi)$, $n+1$ analytic curves, $j=1,\dots,n+1$, $\lambda_j:[-\xi_0,\xi_0]\to B(0,\eps_0)$ such that for $\xi\in[-\xi_0,\xi_0]$
$$
\sigma(L_\xi)\cap B(0,\eps_0)\ =\ \left\{\ \lambda_j(\xi)\ \middle|\ j\in\{1,\dots,n+1\}\ \right\}
$$
and for $\theta>0$ as in (D2)
\be\label{lambdag}
\lambda_j(\xi)= -i\ks\xi a_j \xi+(i\ks\xi)^2b_j+\cO(|\xi|^3),
\qquad
a_j,b_j\quad\textrm{real},\quad \ks^2b_j\geq\theta,
\ee
and associated left and right eigenfunctions $\phi_j(\xi)$ and $\tilde \phi_j(\xi)$ satisfying pairing relations
\be\label{orthogg}
\langle\tilde\phi_j(\xi),\phi_k(\xi)\rangle=i\ks\xi\delta^j_k,\qquad 1\le j,k\le n+1,
\ee
obtained as
\be\label{phibifg}
\begin{array}{rcccl}
\displaystyle
\phi_j(\xi)&=&\displaystyle
(i\ks\xi)\sum_{l=1}^n\beta_l^{(j)}(\xi)q_l(\xi)&+&\displaystyle
\beta_{n+1}^{(j)}(\xi)\quad q_{n+1}(\xi)\\
\displaystyle
\tilde\phi_j(\xi)&=&\displaystyle
\quad\sum_{l=1}^n\tilde\beta_l^{(j)}(\xi)\tilde q_l(\xi)&+&\displaystyle
(i\ks\xi)\tilde\beta_{n+1}^{(j)}(\xi)\quad\tilde q_{n+1}(\xi)\\
\end{array}
\ee
where
\begin{itemize}
\item $(q_1(\xi),\dots,q_{n+1}(\xi))$ and $(\tilde q_1(\xi),\dots,\tilde q_{n+1}(\xi))$ are dual bases of spaces associated to the spectrum of respectively $L_\xi$ and its adjoint $L_\xi^*$ in $B(0,\eps_0)$, analytic in $\xi$, bifurcating from $(\d_{M_1}\bar U,\dots,\d_{M_n}\bar U,\bar U')$ and $(e_1,\dots,e_n,\bar u^{adj})$ at $\xi=0$, with $e_j$ the constant function equal to the $j$th standard Euclidean basis element and $\bar u^{adj}$ a generalized zero eigenfunction of $L_0^*$ such that $\langle\bar u^{adj},\partial_M\bar U\rangle=0$ and $\langle\bar u^{adj},\bar U'\rangle=1$, and chosen such that
\be\label{expand_orthogg}
\langle\tilde q_j(0),\d_\xi q_{n+1}(0)\rangle =0,\qquad \langle\d_\xi\tilde q_j(0),q_{n+1}(0)\rangle =0,\qquad 1\leq j\leq n;
\ee
\item $(\beta^{(1)}(\xi),\dots,\beta^{(n+1)}(\xi))$ and $(\tilde \beta^{(1)}(\xi),\dots,\tilde \beta^{(n+1)}(\xi))$ are dual right and left eigenbases, analytic in $\xi$, of the matrix $\tilde\Lambda_\xi$, defined in \eqref{Lambda_xi}-\eqref{sim}, associated to eigenvalues $\lambda_j(\xi)/(i\ks\xi)$ and in particular for $\xi=0$ they form dual right and left eigenbases associated to $a_j$s of $d(F,-\omega)_{|(\Ms,\ks)}-\cs\Id$.
\end{itemize}
\epr

\begin{proof}
This is a direct consequence of the proof of Proposition \ref{whiteig} except for the conditions on $\lambda_j(\xi)$, $a_j$, $b_j$, 
which follow from (D2) and complex conjugate symmetry together with (H3).
\end{proof}

\br\label{scalingrmk}
\textup{
Scaling transform \eqref{sim} is directly related to the fact that the equation for the time evolution of local wavenumber is obtained by differentiating once in space the equation for the local phase. Though unnecessary to prove Proposition \ref{whiteig} in the uncoupled case, since the proposition follows then already from an examination of $\Lambda^{(1)}$, the manipulation is correct regardless of the linear coupling assumption. Yet in the uncoupled case, we may also assume $\tilde \beta_j^{(n+1)}(0)=0$ and $\beta_{n+1}^j(0)=0$ for $j\neq n+1$. Then by replacing $\tilde \phi_{n+1}(\xi)$ with $(i\ks\xi)^{-1}\tilde \phi_{n+1}(\xi)$ and, for $j\neq n+1$, $\phi_j(\xi)$ with $(i\ks\xi)^{-1}\phi_j(\xi)$, we obtain dual critical bases, analytic in $\xi$. For localized data this difference translates at the linear level into different decay rates. Note also that, even in the linearly uncoupled case, manipulations similar to scaling must be performed to get information about eigenf
 unctions.
}
\er

Next, following \cite{JZ3,JNRZ1}, in view of \eqref{phibifg}(i) with $q_{n+1}(0)=\bar U'$, we decompose the solution operator $S(t)=e^{tL}$ as
\be\label{decompg}
S(t)=S^{\rm p}(t)+\tilde S(t),\qquad
S^{\rm p}(t)=\bar U'\ e_{n+1}\cdot s^{\rm p}(t),\qquad
s^{\rm p}(t)=\sum_{j=1}^{n+1}s^{\rm p}_j(t)
\ee
with
\ba\label{spg}
(s^{\rm p}_j(t)g)(x)
=\int_{-\pi}^{\pi}e^{i\xi x}\alpha(\xi)e^{\lambda_{j}(\xi)t}\frac{1}{i\ks\xi}\beta^{(j)}(\xi)
\langle\tilde\phi_{j}(\xi,\cdot), \check g(\xi,\cdot)\rangle_{L^2([0,1])}d\xi,
\ea
and
\ba\label{tildeSg}
(\tilde S(t) g)(x)&:=
\int_{-\pi}^{\pi} e^{i\xi x}(1-\alpha(\xi))(e^{tL_\xi}  \check g(\xi))(x)d\xi
+\int_{-\pi}^{\pi} e^{i\xi x}\alpha(\xi)(e^{tL_\xi} \tilde \Pi(\xi)\check g(\xi))(x)d\xi\\
&
+\int_{-\pi}^{\pi}e^{i\xi x}\alpha(\xi)\sum_{j=1}^{n+1} e^{\lambda_{j}(\xi) t}
\frac{(\phi_{j}(\xi,x)-e_{n+1}\cdot\beta^{(j)}(\xi)\,q_{n+1}(0,x))}{i\ks\xi}
\langle\tilde\phi_{j}(\xi),\check g(\xi)\rangle_{L^2([0,1])} d\xi,\\
\ea
where $\alpha$ is a smooth cutoff function such that $0\leq\alpha\leq1$,
$\alpha(\xi)=1$ for $|\xi|\le \xi_0/2$ and $\alpha(\xi)=0$ for $|\xi|\ge \xi_0$, and
\be\label{Pig}
\Pi^{\rm p}(\xi):=\sum_{j=1}^{n+1}q_j(\xi)\langle\tilde q_j(\xi),\cdot\rangle_{L^2([0,1])}=\frac{1}{i\ks\xi}\sum_{j=1}^{n+1}\phi_j(\xi)\langle\tilde \phi_j(\xi),\cdot\rangle_{L^2([0,1])},\quad
\tilde \Pi(\xi):=\Id-\Pi^{\rm p}(\xi)
\ee
denote respectively the eigenprojection, defined for $|\xi|\leq\xi_0$, onto the critical space 
$$
\Sigma_\xi=\Span\{\phi_j(\xi)\}_{j=1}^{n+1}
$$
bifurcating from $\Sigma_0$ at $\xi=0$, and its complementary projection.

To establish nonlinear stability, there are of course other natural splitting choices available.
In particular, the fact that we have kept in \eqref{spg} the full $\beta^{(j)}(\xi)$
instead of $\beta^{(j)}(0)$ alone will play a role only in the asymptotic behavior study.

Next, we transpose this linear decomposition to the nonlinear level.  Recalling Lemma \ref{lem:canest} and integral equation \eqref{tied},
we start with the equation
$$
(\partial_t-L)(v+\psi \bar U')=\cN,
\qquad
v|_{t=0}=d_0,\ \psi|_{t=0}=h_0,
$$
for the nonlinear residual $v(x,t)$ and the phase shift $\psi(x,t)$ both introduced in \eqref{pertvar}, 
where $ d_0:=\tilde u_0(\cdot-h_0(\cdot))-\bar U\in L^1(\RM)\cap H^K(\RM)$,
$\partial_x h_0\in L^1(\RM)\cap H^K(\RM)$,
and notice, as in \eqref{tied}, that, after denoting solution operator $S(t):=e^{tL}$, an application of Duhamel's formula 
leads to
\be\label{vimplicit}
v(t)+\psi(t)\bar U'\ =\ S(t)(d_0+h_0\bar U')+\int_0^tS(t-s)\N(s)ds.
\ee
In order to simultaneously accommodate the initial datum constraint $\psi(0)=h_0$ and 
absorb as much as possible $e_{n+1}\cdot s^{\rm p}(t)$ contributions into 
the equation for $\psi$, as described in the discussion surrounding \eqref{1psi}--\eqref{1v},
we split 
\eqref{vimplicit} as 
\ba\label{psidefg}
\psi(t)&=
e_{n+1}\cdot s^{\rm p}(t) (h_0\bar U'+d_0)+
\int_0^t e_{n+1}\cdot s^{\rm p}(t-s)\N(s)ds\\
&-(1-\chi(t))\left(e_{n+1}\cdot s^{\rm p}(t)(d_0+h_0\bar U')-h_0+\int_0^t e_{n+1}\cdot s^{\rm p}(t-s)\N(s)ds\right),
\ea
and
\ba\label{closedg}
v(t)&=\tilde S(t) (d_0+h_0\bar U') + \int_0^t \tilde S(t-s) \N(s)ds\\
&\quad
+(1-\chi(t))\left(S^{\rm p}(t)d_0+(S^{\rm p}(t)-\Id)(h_0\bar U')+\int_0^t S^{\rm p}(t-s)\N(s)ds\right),
\ea
where $\chi(t)$ is a smooth cutoff that is zero for $t\le 1/2$ and one for $t\ge 1$. We may extract from \eqref{psidefg}-\eqref{closedg} a closed system in $(v,\psi_x,\psi_t)$ (and some of their derivatives), and then recover $\psi$ through the slaved equation \eqref{psidefg}.

\medskip

We proceed stating now the linear estimates needed to bound the terms appearing in \eqref{psidefg}-\eqref{closedg}.

\subsection{Basic linear estimates}\label{s:basicg}

\bpr\label{greenbdsg}
Under assumptions (H1)-(H3) and (D1)-(D3),
for all $t\geq0$, $2\leq p\leq \infty$,
and any $l,m\ge 0$, $r\ge 1$, $1\leq j \leq n+1$,
\begin{align}\label{finaleundiffg}
\left\|
\partial_x^l\partial_t^m s^{\rm p}_j(t)  g \right\|_{L^p(\RM)}
\lesssim
\min \begin{cases}
(1+t)^{-\frac{1}{2}(1-1/p)+\frac12-\frac{l+m}{2}}\|g\|_{L^1(\RM)}\\
(1+t)^{-\frac{1}{2}(1/2-1/p)+\frac12-\frac{l+m}{2}}\|g\|_{L^2(\RM)}
\end{cases}
\end{align}
when $l+m\geq1$,
\be\label{finaleundiffcg}
\|\,s^{\rm p}_j(t)  g\,\|_{L^\infty(\RM)}
\ \lesssim\ \|g\|_{L^1(\RM)\cap L^2(\RM)}
\ee
when ($l=m=0$ and $p=\infty$), for some $\eta>0$
\begin{align}\label{finaleundifflayerg}
\|\,e_{n+1}\cdot s^{\rm p}(t)  g\,\|_{L^p(\RM)}
\ \lesssim\
\min \begin{cases}
\quad (1+t)\quad\|g\|_{L^1(\RM)}\\
\quad e^{-\eta t}\ \|g\|_{L^2(\RM)}\ +\ t^{\frac1p}\ \|g\|_{L^1(\RM)}\\
\end{cases}
\end{align}
when ($l=m=0$ and $2\leq p\leq\infty$), and
\begin{align}\label{finaleg}
\left\|
\partial_x^l\partial_t^m s^{\rm p}_j(t) \partial_x^r g \right\|_{L^p(\RM)}
\lesssim
\min \begin{cases}
(1+t)^{-\frac{1}{2}(1-1/p)-\frac{l+m}{2}}\|g\|_{L^1(\RM)}\\
(1+t)^{-\frac{1}{2}(1/2-1/p)-\frac{l+m}{2}}\|g\|_{L^2(\RM)}
\end{cases},
\end{align}
while, for some $\eta>0$, $0\leq l+2m\leq K+1$, and $2\le p\le\infty$,
\begin{align}\label{undiffeqg}
\left\|\partial_x^l \partial_t^m \tilde S(t)g\right\|_{L^p(\RM)}
&\lesssim
\min
\begin{cases}
e^{-\eta t}\|\partial_x^r g\|_{H^{l+2m+1}(\RM)}+(1+t)^{-\frac{1}{2}(1-1/p)}\| g\|_{L^1(\RM)}\\
e^{-\eta t}\|\partial_x^r g\|_{H^{l+2m+1}(\RM)}+(1+t)^{-\frac{1}{2}(1/2-1/p)}\|g\|_{L^2(\RM)}\\
\end{cases},
\end{align}
and, for $1\le r\le K+1$,
\begin{align}\label{finalgg}
\left\|\partial_x^l \partial_t^m \tilde S(t) \partial_x^r g \right\|_{L^p(\RM)}
&\lesssim
\min
\begin{cases}
e^{-\eta t}\|\partial_x^r g\|_{H^{l+2m+1}(\RM)}+(1+t)^{-\frac{1}{2}(1-1/p)-\frac{1}{2}}\|g\|_{L^1(\RM)}\\
e^{-\eta t}\|\partial_x^r g\|_{H^{l+2m+1}(\RM)}+(1+t)^{-\frac{1}{2}(1/2-1/p)-\frac{1}{2}}\|g\|_{L^2(\RM)}\\
\end{cases}.
\end{align}
\epr

\begin{proof}
{\it (i) (Proof of \eqref{finaleundiffg}).}
First, notice that
\be\label{Shf}
(\d_x^l\d_t^m s_j^{\rm p}(t) g)(x)=
\int_{-\pi}^{\pi} \alpha(\xi)e^{\lambda_j(\xi)t}e^{i\xi x}\frac{1}{i\ks\xi}(i\xi)^l\lambda_j(\xi)^m
\beta^{(j)}(\xi)\langle\tilde\phi_j(\xi),\check g(\xi)\rangle_{L^2([0,1])}d\xi.
\ee
In the case $l+m\geq 1$, estimates on $s^{\rm p}$ follows from, choosing either $s=1$ or $s=2$ fixed and
introducing $s'$ such that $1/s+1/s'=1$, the generalized Hausdorff-Young inequality \eqref{hy} by
%
\begin{equation}\label{spbd1}
\begin{aligned}
\displaystyle
\Big\|\ x&\displaystyle
\mapsto\int_{-\pi}^{\pi}
e^{i\xi x}\alpha(\xi)e^{\lambda_j(\xi)t}\frac{1}{i\ks\xi}(i\xi)^l\lambda_j(\xi)^m\beta^{(j)}(\xi)
\langle\tilde \phi_j(\xi),\check g(\xi)\rangle_{L^2([0,1])}d\xi\ \Big\|_{L^p(\RM)}\\
&\displaystyle\lesssim
\|\ (\xi,x)\mapsto\alpha(\xi) e^{\lambda(\xi) t}|\xi|^{l+m-1}
|\langle\tilde \phi_j(\xi),\check g(\xi)\rangle_{L^2([0,1])}|\ \|_{L^q([-\pi,\pi],L^p([0,1]))}\\
&\displaystyle\lesssim
\|\xi\mapsto |\xi|^{l+m-1} e^{-\eta \xi^2 t}\|_{L^{r(s,p)}([-\pi,\pi])}\ \|\xi\mapsto\alpha(\xi)^{1/2}|\langle \tilde \phi_j(\xi),\check g(\xi) \rangle_{L^2([0,1])}|\|_{L^{s'}([-\pi,\pi])}\\
&\displaystyle\lesssim
(1+t)^{-\frac{1}{2}(1/s-1/p)-\frac{l+m-1}{2}}\|\xi\mapsto\alpha(\xi)^{1/2}|\langle \tilde \phi_j(\xi),\check g(\xi) \rangle_{L^2([0,1])}|\|_{L^{s'}([-\pi,\pi])},
\end{aligned}
\ee
where $1/p+1/q=1$ and
$1/s'+1/r(s,p)=1/q$, so that  $1/r(s,p)=1/s-1/p$. Here
we have used (D2) to get for some $\eta>0$, $|e^{\lambda_j(\xi)t}\alpha^{1/2}(\xi)|
\le e^{-\eta \xi^2 t}$ and \eqref{as} to get $\lambda_j(\xi)=\cO(\xi)$.

Now, for $s=2$, we note by the Cauchy-Schwarz inequality and Parseval identity \eqref{psvl} that
\[
\begin{aligned}
\Big\|\xi\mapsto\alpha(\xi)^{1/2}|\langle \tilde \phi_j(\xi),\check g(\xi)\rangle_{L^2([0,1])}\Big\|_{L^{2}([-\pi,\pi])}
&\leq \sup_{|\xi|\leq\xi_0}\|\tilde \phi_j(\xi)\|_{L^2([0,1])}\ \|\check g\|_{L^2([-\pi,\pi],L^2([0,1])}\\
&\lesssim \sup_{|\xi|\leq\xi_0}\|\tilde \phi_j(\xi)\|_{L^2([0,1])}\ \|g\|_{L^2(\RM)},
\end{aligned}
\]
where $\xi_0$ is given as in Proposition \ref{linspecg}.
For $s=1$ on the other hand, we begin by expanding
$$
\langle \tilde \phi_j(\xi),\check g(\xi)\rangle_{L^2([0,1])}=\sum_{j'\in\ZM}\hat{\tilde \phi}_j(\xi,j')^* \widehat g(\xi+2j'\pi),
$$
where $\widehat{\tilde \phi}_j(\xi,j')$ denotes the $j'$th Fourier coefficient in the
Fourier expansion of $2\pi$-periodic function $\tilde \phi_j(\cdot)$, and $z^*=\bar z$ denotes complex conjugate.
Applying the standard Hausdorff-Young inequality for the Fourier transform, i.e. $\|\hat g\|_{L^\infty(\RM)}\le \|g\|_{L^1(\RM)}$,
together with the estimate
\[
\alpha^{1/2}(\xi)\sum_{j'}|\widehat{\tilde \phi}_j(\xi,j')^*|\le
\alpha^{1/2}(\xi)\sqrt{\sum_{j'} (1+|j'|^2)|\widehat{\tilde \phi}_j(\xi,j')|^2\sum_{j'} (1+|j'|^{-2})}
\le C\alpha^{1/2}(\xi) \|\tilde \phi_j(\xi)\|_{H^1([0,1])},
\]
which readily follows from the Cauchy--Schwarz' inequality, we obtain the bound
$$
\begin{aligned}
\|\xi\mapsto\alpha(\xi)^{1/2}|\langle \tilde \phi_j(\xi),\check g(\xi)\rangle_{L^2([0,1])}|\|_{L^{\infty}([-\pi,\pi])}
&\lesssim \sup_{|\xi|\leq\xi_0}\|\tilde \phi_j(\xi)\|_{H^1([0,1])}\ \|g\|_{L^1(\RM)}.
\end{aligned}
$$
Together with \eqref{spbd1}, this establishes \eqref{finaleundiffg}.

\medskip

{\it (ii) (Proof of \eqref{finaleundiffcg}).}
Thanks to \eqref{lambdag}, with the same kind of estimates as above
one can bound in $L^\infty(\RM)$ the difference between $s^{\rm p}_j(t)(g)$ and
$$
x\longmapsto\int_{-\pi}^{\pi} e^{(-i\ks a_j\xi-\ks^2 b_j \xi^2)t}e^{i\xi x}\frac{1}{i\ks\xi}\beta^{(j)}(0)\langle\tilde\phi_j(0),\check g(\xi)\rangle_{L^2([0,1])}d\xi
$$
by $C(1+t)^{-\frac12}\|g\|_{L^1(\RM)}$. Since $\tilde \phi_j(0)$ is constant equal to $\tilde\nu_j:=(\tilde \beta^{(j)}_1(0),\dots,\tilde \beta^{(j)}_n(0))$,
$$
\langle\tilde\phi_j(0),\check g(\xi)\rangle_{L^2([0,1])}=\tilde\nu_j\cdot \widehat{g}(\xi)
$$
and
the last term is recognized to be the convolution of $\tilde\nu_j\cdot g$ with
$$
\begin{aligned}
x\ \mapsto\ \textrm{p.v.} \int_{-\pi}^{\pi} e^{(-i\ks a_j\xi-\ks^2 b_j \xi^2)t}e^{i\xi x}\frac{1}{i\ks\xi}\beta^{(j)}(0) d\xi
&=\textrm{p.v.}\int_\RM e^{(-i\ks a_j\xi-\ks^2 b_j \xi^2)t}e^{i\xi x}\frac{1}{i\ks\xi}\beta^{(j)}(0) d\xi\\
&-\int_{\RM\setminus[-\pi,\pi]} e^{(-i\ks a_j\xi-\ks^2 b_j \xi^2)t}e^{i\xi x}\frac{1}{i\ks\xi}\beta^{(j)}(0) d\xi.
\end{aligned}
$$
By using the Cauchy-Schwarz inequality, we may bound the last integral in $L^2(\RM)$ with $Ce^{-\eta t}$, for some $\eta>0$. The remaining principal value is explicitly computed as a Gaussian error function
$$
\frac{2\pi}{\ks}\beta^{(j)}(0)\ \errfn\left(\frac{x+a_j\ks t}{\sqrt{4\ks^2b_jt}}\right)
$$
and thus is bounded in $L^\infty(\RM)$.
This achieves the proof of \eqref{finaleundiffcg}.

\medskip

{\it (iii) (Proof of \eqref{finaleundifflayerg}).}
We first remark that one can bound in $L^p(\RM)$ the difference between $e_{n+1}\cdot s^{\rm p}(t)(g)$ and
$$
x\longmapsto\int_{-\pi}^{\pi}\quad
e^{i\xi x}\sum_{j=1}^{n+1} e^{(-i\ks a_j\xi-\ks^2 b_j \xi^2)t}\frac{1}{i\ks\xi}e_{n+1}\cdot\beta^{(j)}(0)\ \tilde\nu_j\cdot \widehat{g}(\xi)d\xi
$$
by $C(1+t)^{-\frac12(1-1/p)}\|g\|_{L^1(\RM)}$. Since $\{\beta_j(0)\}_j$ and $\{\tilde\beta_j(0)\}_j$ are dual bases, 
it follows, with $\tilde\nu_j$ still denoting the constant value of $\tilde \phi_j(0)$, that 
$$
\sum_{j=1}^{n+1}\beta^{(j)}(0)\ \tilde\nu_j^T\ =\ \sum_{j=1}^{n+1}\beta^{(j)}(0)\tilde\beta^{(j)}(0)^T\ \begin{pmatrix}\Id_{d\times d}\\0\cdots0\end{pmatrix}
\ =\ \begin{pmatrix}\Id_{d\times d}\\0\cdots0\end{pmatrix},
$$
and therefore
$$
e_{n+1}\cdot\sum_{j=1}^{n+1}\beta^{(j)}(0)\ \tilde\nu_j^T\ =\ \begin{pmatrix}0&\cdots&0\end{pmatrix}.
$$
Thus, the quantity to bound may be written as the convolution of $g$ with the kernel
\be\label{kerlab}
x\longmapsto\int_{-\pi}^{\pi}\quad
e^{i\xi x}\sum_{j=1}^{n} \dfrac{e^{(-i\ks a_j\xi-\ks^2 b_j \xi^2)t}-e^{(-i\ks a_{j+1}\xi-\ks^2 b_{j+1}\xi^2)t}}{i\ks\xi}\
\left(\sum_{j'=1}^j e_{n+1}\cdot\beta^{(j')}(0)\ \tilde\nu_{j'}^T\right)\ d\xi.
\ee
Observing that
$$
\left|\dfrac{e^{(-i\ks a_j\xi-\ks^2 b_j \xi^2)t}-e^{(-i\ks a_{j+1}\xi-\ks^2 b_{j+1}\xi^2)t}}{i\ks\xi}\right|
\ \lesssim\ t
$$
provides \eqref{finaleundifflayerg}(i).

To get \eqref{finaleundifflayerg}(ii), we first write the kernel \eqref{kerlab} above as
$$
\begin{aligned}
x\longmapsto\int_{-\pi}^{\pi}\int_{-1}^1&\quad
\sum_{j=1}^{n}e^{i\xi \left(x+\ks\left(\frac{a_j+a_{j+1}}{2}+\tau\frac{a_j-a_{j+1}}{2}\right)t\right)}\
t\left[\frac{a_j-a_{j+1}}{2}+i\ks\xi(b_j-b_{j+1})\tau\right]\\
&\times e^{-\left(\frac{b_j+b_{j+1}}{2}+\tau^2\frac{b_j-b_{j+1}}{2}\right)\ks^2\xi^2t}\
\left(\sum_{j'=1}^j e_{n+1}\cdot\beta^{(j')}(0)\ \tilde\nu_{j'}^T\right)\ d\tau d\xi.
\end{aligned}
$$
Integration by parts in the $\xi$ variable shows that for any fixed $\eps>0$ the contribution to the $L^p(\RM)$ norm of the kernel of points $x$ lying outside
$$
I^\eps_t\ :=\ \left[\ t\ \left(\min_{1\leq j\leq n+1}a_j-\eps\right)\ ,\ t\ \left(\max_{1\leq j\leq n+1}a_j+\eps\right)\ \right]
$$
is bounded by
$$
C_\eps\ t\ \left(\int_{\eps t}^\infty \frac{dy}{y^p} \right)^{1/p}\ \leq\ C_\eps t^{1/p}.
$$
Following the end of the proof of \eqref{finaleundiffcg}, we may write the remaining part of the convolution as the convolution of $g$ with a term decaying exponentially in time in $L^2(\RM)$ plus its convolution with a bounded function supported on $I^\eps_t$. This yields \eqref{finaleundifflayerg}(ii).

\medskip

{\it (iv) (Proof of \eqref{finaleg}).}
The proof goes similarly
as the one of \eqref{finaleundiffg} thanks to the fact that since $\tilde \phi_j(0)$ is constant, for $r\geq1$, the equality
$$
\begin{aligned}
\langle \tilde \phi_j(\xi)&,(\d_x^rg)\,\check{}\ (\xi)\rangle_{L^2([0,1])}
=\sum_{r'=0}^r\binom{r}{r'}(i\xi)^{r'}\langle \tilde \phi_j(\xi),\d_x^{r-r'}\check g(\xi)\rangle_{L^2([0,1])}\\
&=\xi(-1)^r\left\langle \d_x^{r}\left(\frac{\tilde \phi_j(\xi)-\tilde \phi_j(0)}{\xi}\right),\check g(\xi)\right\rangle
+i\xi\sum_{r'=1}^r\binom{r}{r'}(i\xi)^{r'-1}(-1)^{r-r'}\langle \d_x^{r-r'}\tilde \phi_j(\xi),\check g(\xi)\rangle
\end{aligned}
$$
holds and provides the factor $\xi$ needed to compensate for $(i\ks\xi)^{-1}$.

\medskip

{\it (v) (Proof of \eqref{undiffeqg} and \eqref{finalgg}).}
The last part of \eqref{tildeSg} is bounded as $s_j^{\rm p}$, with an extra $\xi$ factor compensating for $(i\ks\xi)^{-1}$ thus enhancing decay and allowing for $l=m=0$ and $2\leq p\leq\infty$. We focus on the two remaining terms of \eqref{tildeSg}.

To treat these, it is convenient to introduce on $H^l([0,1])$ a family of equivalent norms parametrized by $\xi\in[-\pi,\pi]$,
$$
\|g\|_{\dot{H}^l_\xi([0,1])}^2:=\sum_{j=0}^{l}\|(\partial_x +i\xi)^jg\|_{L^2([0,1])}^2
$$
so that Parseval's identity implies
$$
\|g\|_{H^{l}(\RM)}^2\ =\ (2\pi)\
\|\xi\mapsto\ \|\check g(\xi)\|_{\dot{H}^{l}_\xi([0,1]))}\ \|_{L^2([-\pi,\pi])}^2.
$$
Now, thanks to standard resolvent bounds \cite{He}, assumptions (D1)-(D3) and the fact that $H^{l+2m+1}([0,1])$ and $L^2([0,1])$ spectra coincide \cite{G}, we may use Pr\"uss' Theorem \cite{Pr} and obtain that for some $\eta>0$
$$
|e^{tL_\xi}(1-\alpha(\xi))|_{\dot{H}_\xi^{l+2m+1}([0,1])\to \dot{H}_\xi^{l+2m+1}([0,1])},\quad
|\alpha(\xi) e^{tL_\xi}\ \tilde \Pi(\xi)|_{\dot{H}_\xi^{l+2m+1}([0,1])\to \dot{H}_\xi^{l+2m+1}([0,1])}
\lesssim e^{-\eta t}.
$$
Therefore
$$
\begin{aligned}
\Big\|x\mapsto&\int_{-\pi}^{\pi} e^{i\xi x}(1-\alpha(\xi))(L_\xi^m e^{tL_\xi}\check g(\xi))(x)d\xi
\Big\|_{H^{l+1}(\RM)},\\
\Big\|x\mapsto&
\int_{-\pi}^{\pi} e^{i\xi x}\alpha(\xi)(L_\xi^m e^{tL_\xi} \tilde \Pi(\xi)\check g(\xi))(x)d\xi
\Big\|_{H^{l+1}(\RM)}\\
&\lesssim e^{-\eta t}\|\xi\mapsto\ \|\check g(\xi)\|_{\dot{H}^{l+2m+1}_\xi([0,1]))}\ \|_{L^2([-\pi,\pi])}
\lesssim e^{-\eta t}\|g\|_{H^{l+2m+1}(\RM)}.
\end{aligned}
$$
Since, for $2\le p\le \infty$, $H^{l+1}(\RM)$ is embedded in $W^{l,p}(\RM)$, this completes the proof of the proposition.
\end{proof}

\br\label{layer_behavior}
\textup{
We have included \eqref{finaleundifflayerg}(ii) to give a better account of large-time behavior of the phase. Yet, in our nonlinear stability analysis we 
use \eqref{finaleundifflayerg}
only
 to ensure that the 
initial
time layer remains localized, thus only for intermediate times $1/2\leq t\leq 1$. For this purpose, \eqref{finaleundifflayerg}(i) is sufficient. The proof of the latter estimate is easier to obtain and involves only frequency arguments in the spirit of the rest of the paper.}
\er

\subsection{Linear modulation bounds}\label{s:modg}
The above basic linear estimates are sufficient to control all terms in \eqref{psidefg} and \eqref{closedg}
in the localized case when $h_0\equiv 0$.  To control the additional terms arising from the nonlocalized
initial phase shift $h_0\bar{U}'$, we use the bounds in the following proposition.
Here and elsewhere, we suppress the dependence of bounds on
norms of the background periodic wave $\bar U$, the
periodic right and left eigenbases $\phi_j$ and $\tilde \phi_j$,
or other known periodic functions, which, by (H1), may be seen to be as smooth as needed for the $H^s([0,1])_{\rm per}$ bounds our arguments require.

For functions $h$ with localized derivative, we will use repeatedly $\hat{h}(\xi)=\frac{1}{i\xi}\widehat{\d_x h}(\xi)$. From now on, to make this possible, we will assume that all such functions are centered, in the sense that $h(-\infty)=-h(\infty)$ (which includes the case where $h$ is itself localized and both terms vanish). In the statements of Theorems \ref{oldmain} and \ref{main}, it corresponds to the assumption that the global phase shift $\psi_\infty$ is $0$. Of course, in doing so, one does not lose in generality since this is achieved by replacing $\bar U$, $h_0$, $\psi$, \emph{etc.} with $\bar U(\,\cdot\,-\psi_\infty)$, $h_0-\psi_\infty$, $\psi-\psi_\infty$, \emph{etc.}

\bpr\label{modpropg}
Under (H1)--(H3) and (D1)--(D3), for all $t\geq0$, $2\leq p\leq \infty$, $l,m\geq0$,\\ $1\leq j \leq n+1$,
\be\label{Spmodg}
\|\d_x^l\d_t^m s^{\rm p}_j(t)(h_0\bar U')\|_{L^p (\RM)}
\lesssim
(1+t)^{-\frac{1}{2}(1-1/p)+\frac{1}{2}-\frac{l+m}{2}}\|\partial_x h_0\|_{L^1 (\RM)}
\ee
when $l+m\ge 1$,
\be\label{Spmodcg}
\|\,s^{\rm p}_j(t)(h_0\bar U')\,\|_{L^\infty(\RM)}
\ \lesssim\
\|\partial_x h_0\|_{L^1 (\RM)\cap L^2(\RM)}
\ee
when ($l=m=0$ and $p=\infty$),
\be\label{Spmodlayerg}
\|\,e_{n+1}\cdot s^{\rm p}(t)(h_0\bar U')-h_0\,\|_{L^p(\RM)}
\ \lesssim\ (1+t^{\frac1p})\quad\|\d_x h_0\|_{L^1(\RM)\cap L^2(\RM)}
\ee
when ($l=m=0$ and $2\leq p\leq\infty$), and, for $0\leq l+2m \leq K+1$,
\be\label{tildeSmodg}
\|\d_x^l \d_t^m \tilde S (t)(h_0\bar U')\|_{L^p(\RM)} \lesssim (1+t)^{-\frac{1}{2}(1-1/p)}
\|\d_x h_0\|_{L^1(\RM)\cap H^{l+2m+1}(\RM)}.
\ee
\epr

\begin{proof}\footnote{
Compare to the similar but much simpler argument of \cite[Proposition 4.1]{JNRZ1}, in the reaction-diffusion case.}
{\it (i) (Proof of \eqref{Spmodg}).}
This follows applying the arguments of the proof of \eqref{finaleundiffg} in Proposition \ref{greenbdsg}, once we have established
the estimate
$$
\begin{aligned}
\|\xi\mapsto\alpha(\xi)^{1/2}&|\langle \tilde \phi_j(\xi),(h_0 \bar U')\ \check{}\ (\xi)\rangle_{L^2([0,1])}|\|_{L^{\infty}([-\pi,\pi])}\\
&\lesssim \left(\sup_{|\xi|\leq\xi_0}\|\d_\xi\tilde \phi_j(\xi)\bar U'\|_{L^1([0,1])}+
\sup_{|\xi|\leq\xi_0}\|\tilde \phi_j(\xi)\bar U'\|_{L^2([0,1])}\right)\ \|\d_x h_0\|_{L^1(\RM)}.
\end{aligned}
$$
The latter bound stems from first re-expressing
$$
\begin{aligned}
\alpha(\xi)^{1/2}\langle\tilde\phi_j(\xi),(h_0 \bar U')\ \check{}\ (\xi)\rangle_{L^2([0,1])}
&=\ -i\,\alpha(\xi)^{1/2}
\left\langle\frac{\tilde\phi_j(\xi)-\tilde\phi_j(0)}{\xi},\bar U'\right\rangle_{L^2([0,1])}\ \widehat{\partial_x h_0}(\xi)\\
&\quad+\sum_{j'\neq 0}\alpha(\xi)^{1/2}
\frac{(\widehat{\tilde \phi_j\bar U'})(\xi,j')^*}{i(\xi+2\pi j')}
\ \widehat {\partial_x h_0}(\xi+2j'\pi),
\end{aligned}
$$
where $(\widehat{\tilde\phi_j\bar U'})(\xi,j')$ denotes the $j'$th Fourier coefficient in the Fourier expansion of periodic function $\tilde\phi_j(\xi)\bar U'$, then applying a Haussdorff-Young estimate, the Mean Value Theorem, Cauchy-Schwarz' inequality, and Parseval's identity.

\medskip

{\it (ii) (Proof of \eqref{Spmodcg}).}
Thanks to \eqref{lambdag}, since $\tilde \phi_j(0)$ is constant equal to $\tilde\nu_j$, with the same kind of estimates one can bound in $L^\infty(\RM)$ the difference between $s^{\rm p}_j(t)(h_0\bar U')$ and
$$
x\mapsto\int_{-\pi}^{\pi} e^{(-i\ks a_j\xi-\ks^2 b_j \xi^2)t}e^{i\xi x}\frac{1}{i\ks\xi}\beta^{(j)}(0)
[-i\langle\d_\xi\tilde\phi_j(0),\bar U'\rangle_{L^2([0,1])}\widehat{\d_x h_0}(\xi)+\tilde\nu_j\cdot\langle\bar U',[\check h_0(\xi)-\widehat{h_0}(\xi)]\rangle_{L^2([0,1])}]d\xi
$$
by $C(1+t)^{-\frac12}\|\d_x h_0\|_{L^1(\RM)}$. Now note that, for $\xi\in[-\pi,\pi]$
$$
\begin{aligned}
\langle\bar U',[\check h_0(\xi)-\widehat{h_0}(\xi)]\rangle_{L^2([0,1])}
&=-\langle\bar U,\d_x(\check h_0)(\xi)\rangle_{L^2([0,1])}\\
&=-\langle\bar U,\check{(\d_xh_0)}(\xi)\rangle_{L^2([0,1])}+\langle\bar U\rangle\widehat{\d_xh_0}(\xi)
+i\xi\langle\bar U,[\check h_0(\xi)-\widehat{h_0}(\xi)]\rangle_{L^2([0,1])}\\
&=-\langle(\bar U\d_xh_0)\ \check{}\ (\xi)\rangle+\langle\bar U\rangle\widehat{\d_xh_0}(\xi)
+i\xi\langle\bar U,[\check h_0(\xi)-\widehat{h_0}(\xi)]\rangle_{L^2([0,1])}
\end{aligned}
$$
leading to 
\be\label{key_initial_data}
\langle\bar U'[\check h_0(\xi)-\widehat{h_0}(\xi)]\rangle
=-[(\bar U-\langle\bar U\rangle)\d_xh_0]\ \widehat{}\ (\xi)
+i\xi\langle\bar U,[\check h_0(\xi)-\widehat{h_0}(\xi)]\rangle_{L^2([0,1])}.
\ee
Again the extra $\xi$ factor makes the contribution of the last term negligible so that we are left with proving a $L^\infty(\RM)$ bound on
$$
x\mapsto\int_{-\pi}^{\pi} e^{(-i\ks a_j\xi-\ks^2 b_j \xi^2)t}e^{i\xi x}\frac{1}{i\ks\xi}\beta^{(j)}(0)
[-i\langle\d_\xi\tilde\phi_j(0),\bar U'\rangle_{L^2([0,1])}\d_x h_0-\tilde\nu_j\cdot(\bar U-\langle\bar U\rangle)\d_xh_0]\ \widehat{}\ (\xi)d\xi.
$$
Then the proof of \eqref{Spmodcg} is achieved as was the one of \eqref{finaleundiffcg}, writing the main contribution as a term exponentially-decaying in time plus a convolution with an explicit errorfunction.\footnote{Up to the
explicit computation of the final convolution kernel, the arguments
of the proofs of \eqref{finaleundiffcg}-\eqref{finaleundifflayerg}
and \eqref{Spmodcg}-\eqref{Spmodlayerg} are the ones refined to obtain \eqref{heatwhitham_init}.
}

\medskip

{\it (iii) (Proof of \eqref{Spmodlayerg}).}
The $L^\infty(\RM)$ bound follows from \eqref{Spmodcg} and $\|h_0\|_{L^\infty(\RM)}\leq \|\d_xh_0\|_{L^1(\RM)}$. By interpolation we only need now the $L^2(\RM)$ bound.

Since $\|\ \xi\longmapsto|\xi|^{-1}\ \|_{L^2(\RM\setminus[-\pi,\pi])}\lesssim 1$,
$$
\left\|\ x\longmapsto h_0(x)-\int_{-\pi}^\pi e^{i\xi x}\quad \widehat{h_0}(\xi)d\xi\ \right\|_{L^2(\RM)}\lesssim \|\d_x h_0\|_{L^1(\RM)}.
$$
Combing this with arguments of the proof of \eqref{Spmodcg}, we bound in $L^2(\RM)$ the difference between $e_{n+1}\cdot s^{\rm p}(t)(h_0\bar U')-h_0$ and
$$
\begin{aligned}
x\mapsto&\int_{-\pi}^{\pi}\ e^{i\xi x}\left[\sum_{j=1}^{n+1}e^{(-i\ks a_j\xi-\ks^2 b_j \xi^2)t}\frac{1}{i\ks}e_{n+1}\cdot\beta^{(j)}(0)
\langle\d_\xi\tilde\phi_j(0),\bar U'\rangle_{L^2([0,1])}\widehat{h_0}(\xi)-\widehat{h_0}(\xi)\right]d\xi\\
&-\int_{-\pi}^{\pi}\ e^{i\xi x}\sum_{j=1}^{n+1}e^{(-i\ks a_j\xi-\ks^2 b_j \xi^2)t}\frac{1}{i\ks\xi}e_{n+1}\cdot\beta^{(j)}(0)
\tilde\nu_j\cdot [(\bar U-\langle\bar U\rangle)\d_xh_0]\ \widehat{}\ (\xi)d\xi
\end{aligned}
$$
by $C\|\d_x h_0\|_{L^1(\RM)\cap L^2(\RM)}$. In the sum the latter term is bounded following the proof of \eqref{finaleundifflayerg}(ii). To bound the first term, we first observe that, thanks to \eqref{expand_orthogg},
$$
\langle\d_\xi\tilde\phi_j(0),\bar U'\rangle_{L^2([0,1])}\ =\ \langle\d_\xi\tilde\phi_j(0),q_{n+1}(0)\rangle_{L^2([0,1])}
\ =\ i\ks\tilde\beta^{(j)}_{n+1}(0)\ =\ i\ks\tilde\beta^{(j)}(0)\cdot e_{n+1}
$$
therefore
$$
\frac{1}{i\ks}\sum_{j=1}^{n+1}e_{n+1}\cdot\beta^{(j)}(0)\langle\d_\xi\tilde\phi_j(0),\bar U'\rangle_{L^2([0,1])}
\ =\ e_{n+1}^T\sum_{j=1}^{n+1}\beta^{(j)}(0)\tilde\beta^{(j)}(0)^T e_{n+1}
\ =\ e_{n+1}\cdot e_{n+1}\ =\ 1.
$$
Then the function to bound is written
$$
x\mapsto\int_{-\pi}^{\pi}\ e^{i\xi x}\sum_{j=1}^{n+1}\frac{e^{(-i\ks a_j\xi-\ks^2 b_j \xi^2)t}-1}{i\ks\xi}\beta_{n+1}^{(j)}(0)
\tilde\beta_{n+1}^{(j)}(0)\ \ks\widehat{\d_xh_0}(\xi)d\xi
$$
and is bounded in $L^2(\RM)$ splitting
$$
\frac{e^{(-i\ks a_j\xi-\ks^2 b_j \xi^2)t}-1}{i\ks\xi}
\ =\ \frac{e^{(-i\ks a_j\xi-\ks^2 b_j \xi^2)t}-e^{-\ks^2 b_j \xi^2t}}{i\ks\xi}+\frac{e^{-\ks^2 b_j \xi^2t}-1}{i\ks\xi}
$$
and using both
$
\left|\frac{e^{-\ks^2 b_j \xi^2t}-1}{i\ks\xi}\right|\ \lesssim\ \sqrt{t}
$
and the arguments of the proof of \eqref{Spmodcg}(ii) where $I^\eps_t$ is replaced with
$$
\tilde I^\eps_t\ :=\ \left[\ t\ \left(\min_{1\leq j\leq n+1}(a_j)_{-}-\eps\right)\ ,\ t\ \left(\max_{1\leq j\leq n+1}(a_j)_{+}+\eps\right)\ \right]
$$
(${}_{-}$ and ${}_{+}$ denoting negative and positive parts).

\medskip

{\it (iv) (Proof of \eqref{tildeSmodg}).}
The last part of \eqref{tildeSg} is bounded as was $s_j^{\rm p}$, with the 
usual improvement in integrability and decay coming from the extra $\xi$ factor. 
Once we have proved
$$
\|\xi\mapsto\ \|\bar U'[\check h_0(\xi)-\widehat{h_0}(\xi)]\|_{\dot{H}^{l+2m+1}_\xi([0,1]))}\ \|_{L^2([-\pi,\pi])}
\lesssim \|\d_x h_0\|_{H^{l+2m+1}(\RM)},
$$
the contribution of the high frequencies of $h_0$ to the remaining terms is estimated following the proof of \eqref{undiffeqg}. Since, for $\xi\in[-\pi,\pi]$ 
there holds
$$
\begin{aligned}
\|\bar U'[\check h_0(\xi)-\widehat{h_0}(\xi)]\|_{\dot{H}^{l+2m+1}_\xi([0,1]))}
&\leq\sum_{j'\neq0} \|e^{2\pi j'\,\cdot}\ \bar U'\|_{\dot{H}^{l+2m+1}_\xi([0,1]))}|\widehat{h_0}(\xi+2\pi j')|\\
&\lesssim\big(1+\|\bar U'\|_{H^{l+2m+1}([0,1]))}\big)\sum_{j'\neq0}|\xi+2\pi j'|^{l+2m+1}|\widehat{h_0}(\xi+2\pi j')| \\
&\lesssim \sum_{j'\neq0}\frac{1}{|\xi+2\pi j'|}\big|\widehat{(\d_x^{l+2m+2}h_0)}(\xi+2\pi j')\big|\\
&\lesssim \sqrt{\sum_{j'\neq0}\big|\widehat{(\d_x^{l+2m+2}h_0)}(\xi+2\pi j')\big|^2},
\end{aligned}
$$
by squaring and integrating we obtain the needed bound.

To deal with low-frequency contributions, we use the following refinements:
$$
\begin{aligned}
|e^{tL_\xi}(1-\alpha(\xi))|_{\dot{H}_\xi^{l+2m+1}([0,1])\to \dot{H}_\xi^{l+2m+1}([0,1])}
&\lesssim |\xi|e^{-\eta t}\\
|\alpha(\xi)^{1/2}(\tilde\Pi(\xi)-\tilde\Pi(0))|_{\dot{H}_\xi^{l+2m+1}([0,1])\to \dot{H}_\xi^{l+2m+1}([0,1])}&
\lesssim |\xi|
\end{aligned}
$$
(for some $\eta>0$ and all $\xi\in[-\pi,\pi]$). Then, since $\tilde\Pi(0)\bar U'=0$,
$$
\alpha(\xi)^{1/2}\ \tilde\Pi(\xi)\bar U'\ =\ \alpha(\xi)^{1/2}\ \tilde\Pi(\xi)(\tilde\Pi(\xi)-\tilde\Pi(0))\bar U',
$$
and, by following the proof of \eqref{undiffeqg}, we reduce the bound on the remaining terms to
$$
\|\xi\mapsto\ \|\bar U'\|_{\dot{H}^{l+2m+1}_\xi([0,1]))}\ |\xi|\,|\widehat{h_0}(\xi)|\ \|_{L^2([-\pi,\pi])}
\lesssim \|\d_x h_0\|_{L^2(\RM)}.
$$
\end{proof}

\br\label{local_nonlocal}
\textup{
Unlike what occurs in the linearly decoupled case, linear bounds for a localized initial datum $d_0\in L^1(\RM)$ or a nonlocalized one of shift type $h_0 \bar U'$, $\d_xh_0\in L^1(\RM)$ provide in general the same decay rates. As a consequence, once these bounds are proved, the proof of nonlinear stability is identical to the one for localized perturbations \cite{JZ3}. In particular, the slow decay rate due to the Jordan block is compensated for 
nonlinear terms by the fact that they come in flux form.
}
\er

\subsection{Nonlinear stability}\label{s:proofg}
From differential equation \eqref{veq} together with integral equation \eqref{psidefg}-\eqref{closedg}, we readily obtain short-time existence, uniqueness and continuity with respect to $t$ of solution $(v,\psi_t,\psi_x)\in H^{K}(\RM)\times H^{K-1}(\RM)\times H^{K}(\RM)$ by a standard contraction-mapping argument treating most of the terms as sources in a heat equation. Associated with this solution define so long as it is finite
\ba\label{szeta}
\zeta(t)&:=\sup_{0\le s\le t}
\|(v, \psi_t,\psi_x)(s)\|_{H^K(\RM)\times H^{K-1}(\RM)\times H^{K}(\RM)}(1+s)^{1/4}.
\ea
Combining linear estimates with Proposition~\ref{damping}, we now prove an inequality for $\zeta$ that will yield global existence of our solutions.

\bl\label{sclaim}
Under assumptions (H1)--(H3) and (D1)--(D3), there exist positive constants $C$ and $\varepsilon$ such that if $(d_0,\partial_x h_0)$ is such that for some $T>0$
$$
E_0:=\|(d_0,\partial_x h_0)\|_{L^1(\RM)\cap H^K(\RM)}\ \leq\ \varepsilon\qquad\textrm{and}\qquad\zeta(T)\ \leq\ \varepsilon
$$
then, for all $0\leq t\leq T$,
$$
\zeta(t)\le C(E_0+\zeta(t)^2).
$$
\el

\begin{proof}\footnote{Compare to the argument of \cite[Lemma 4.2]{JZ2}, regarding localized perturbations in the decoupled case.}
By \eqref{eqn:Q}--\eqref{eqn:S} and corresponding bounds on the derivatives together
with definition \eqref{szeta} and equation \eqref{vperteq2} (used to bound $v_t$),
$$
\| \N(t)\|_{L^1(\RM)\cap H^1(\RM)}
\lesssim \|(v,v_x,v_{xx},\psi_t,\psi_x,\psi_{xx})(t)\|_{H^1(\RM)}^2
\le C\zeta(t)^2 (1+t)^{-\frac{1}{2}},\\
$$
\be\label{sQRSbds}
\|(\cQ,\cR,\cS)(t)\|_{L^1(\RM)\cap H^1(\RM)} \lesssim
\|(v,v_x,\psi_t,\psi_x)(t)\|_{H^1(\RM)}^2\le
C\zeta(t)^2 (1+t)^{-\frac{1}{2}},\\
\ee
so long as $\zeta(t)$ remains small. Applying the bounds \eqref{finaleundiffg}(i)--\eqref{finalgg}(i)
and \eqref{Spmodg}--\eqref{tildeSmodg} of Propositions \ref{greenbdsg} and \ref{modpropg}
to system \eqref{psidefg}-\eqref{closedg}, we obtain for any $2\le p<\infty$
\ba\label{sestg}
\|v(t)\|_{L^p(\RM)}& \le
C(1+t)^{-\frac{1}{2}(1-1/p)}E_0
+C\zeta(t)^2\int_0^{t} (1+t-s)^{-\frac{1}{2}(1-1/p)-\frac{1}{2}}
(1+s)^{-\frac{1}{2}}ds\\
&\le C_p (E_0+\zeta(t)^2) (1+t)^{-\frac{1}{2}(1-1/p)}
\ea
and, with
\be\label{dt_S}
\int_0^t s^{\rm p}(t-s)\d_t\cS(s)ds\ =\
-\int_0^t \d_t[s^{\rm p}](t-s)\cS(s)ds+s^{\rm p}(0)\cS(t)-s^{\rm p}(t)\cS(0),
\ee
\ba\label{sestadg}
\|(\psi_t,\psi_x,\psi_{xx})(t)\|_{W^{K-1,p}(\RM)}&\le
C(1+t)^{-\frac{1}{2}(1-1/p)}(E_0+\zeta(t)^2)\\
&+C\zeta(t)^2\int_0^{t} (1+t-s)^{-\frac{1}{2}(1-1/p)-1/2}(1+s)^{-\frac{1}{2}}ds \\
&\le C_p(E_0+\zeta(t)^2) (1+t)^{-\frac{1}{2}(1-1/p)}.
\ea
Since the hypotheses of Proposition \ref{damping} are verified, from \eqref{Ebds} and \eqref{sestg}--\eqref{sestadg}, we thus obtain
\be \label{v_H1g}
\|v(t)\|_{H^K(\RM)} \le
 C(E_0+\zeta(t)^2) (1+t)^{-\frac{1}{4}}.
\ee
Combining this with \eqref{sestadg} for $p=2$, we obtain the result.
\end{proof}

\medskip

We are now ready to prove Theorem~\ref{oldmain}.

\begin{proof}[Proof of Theorem \ref{oldmain}]
As already mentioned, short-time existence and uniqueness ensuring continuously in time $(v,\psi_t,\psi_x)\in H^{K}(\RM)\times H^{K-1}(\RM)\times H^{K}(\RM)$ are proved in a standard way. Therefore, by Lemma \ref{sclaim} it follows by continuous induction that solutions are global in time and satisfy $\zeta(t)\le 2C E_0$ for $t \ge0$, if $E_0\leq\min(\{1/4C^2,\varepsilon,\varepsilon/2C\})$, yielding by \eqref{szeta} the result \eqref{mainest} for $p=2$.

For any $p_*<\infty$, applying \eqref{sestg}--\eqref{sestadg}, we obtain \eqref{mainest} for $2\le p\le p_*$
with a uniform constant $C$. Now rewrite \eqref{eqn:R} as $\cR=\cR_1+\d_x\cR_2$ with
$$
\cR_1:=- v\psi_t-\ks^2 v\left(\frac{\psi_{x}}{1-\psi_x}\right)_x+ \ks^2\bar U_x\frac{\psi_x^2}{1-\psi_x},\qquad
\cR_2:=\ks^2 v\frac{\psi_{x}}{1-\psi_x}.
$$
Taking $p_*\geq4$ and estimating
\be\label{QRST}
\|(\cQ,\cR_1,\cR_2,\cS)(t)\|_{L^2(\RM)} \lesssim \|(v,\psi_t,\psi_x,\psi_{xx})(t)\|_{L^4(\RM)}^2\le CE_0(1+t)^{-\frac{3}{4}}
\ee
in place of the weaker \eqref{sQRSbds}, then applying to integral terms \eqref{finaleg}(ii) in place of \eqref{finaleg}(i), we obtain, bounding again the $\d_t\cS$ contribution using \eqref{dt_S},
\ba\label{sestad2}
\|(\psi_t,\psi_x,\psi_{xx})(t)\|_{W^{K-1,p}(\RM)}& \le
C(1+t)^{-\frac{1}{2}(1-1/p)}E_0+CE_0^2\int_0^{t} (1+t-s)^{-\frac{1}{2}(1/2-1/p)-1/2}
(1+s)^{-\frac{3}{4}}ds \\
&\le CE_0(1+t)^{-\frac{1}{2}(1-1/p)},
\ea
for $2\le p\le \infty$.
Likewise, using \eqref{QRST} together with bound
$$
\|(\cQ,\cR_1)(t)\|_{H^{K-1}(\RM)}+\|(\cR_2,\cS)(t)\|_{H^K(\RM)}\lesssim E_0^2 (1+t)^{-\frac{1}{2}}
$$
obtained from the bound on $\zeta$, and 
$$
\int_0^t \tilde S(t-s)\d_t\cS(s)ds\ =\
-\int_0^t \d_t[\tilde S](t-s)\cS(s)ds+\tilde S(0)\cS(t)-\tilde S(t)\cS(0),
$$
instead of \eqref{dt_S}, we may use \eqref{finalgg}(ii) rather than \eqref{finalgg}(i) to get, 
provided $l+2m\leq K-3$,
\ba\label{sest2}
\|\d_x^l\d_t^m v(t)\|_{L^p(\RM)}
&\le
C\,(1+t)^{-\frac{1}{2}(1-1/p)}E_0
+C\,E_0^2\int_0^t e^{-\eta (t-s)}(1+s)^{-\frac{1}{2}}ds\\
&
\quad+C\,E_0^2\int_0^{t} (1+t-s)^{-\frac{1}{2}(1/2-1/p)-\frac{1}{2}}(1+s)^{-\frac{3}{4}}ds\\
&\le C\,E_0 (1+t)^{-\frac{1}{2}(1-1/p)}
\ea
and achieve the proof of \eqref{mainest} for $2\le p\le \infty$.

Estimate \eqref{andpsi} then follows through \eqref{psidefg}
using \eqref{finaleg}(i) and \eqref{Spmodg}, by 
$$
\|\psi (t) \|_{L^\infty(\RM)}
\ \le\ C E_0+CE_0^2\int_0^{t} (1+t-s)^{-\frac{1}{2}}(1+s)^{-\frac{1}{2}}ds
\ \le\ CE_0,
$$
yielding nonlinear stability in $L^\infty$ since
$$
\tilde u(x,t)-\bar U(x)= \tilde u(x,t)-\bar U(x+\psi(x,t))+\bar U(x+\psi(x,t),t)-\bar U(x)
$$
so that, by Lemma \ref{switchlem},
$$
\|\tilde{u}(t)-\bar U\|_{L^\infty(\RM)}\lesssim \|v(t)\|_{L^\infty(\RM)}+\|\psi(t)\|_{L^\infty(\RM)}\|\psi_x(t)\|_{L^\infty(\RM)}
+\|\psi(t)\|_{L^\infty(\RM)}\sup_{[0,1]}|\bar U'|\ .
$$
Finally, Lemma \ref{switchlem} provides \eqref{backest}.
\end{proof}

In proving Theorem \ref{oldmain}, we actually got or could get more estimates than announced. Since we need these extra estimates to prove Theorem \ref{main}, we record here for later these other bounds.
\bpr\label{known_rkg} Under the assumptions of Theorem \ref{oldmain} and with its notations, for $2\le p\le\infty$ and $t\geq0$,
\ba\label{known_vg}
\|v(t)\|_{L^p(\RM)}
&\lesssim E_0 (1+t)^{-\frac{1}{2}(1-1/p)}\\
\|\d_x^l\d_t^mv(t)\|_{L^p(\RM)}
&\lesssim E_0 (1+t)^{-\frac{1}{2}}\\
\|v(t)\|_{H^K(\RM)}
&\lesssim E_0 (1+t)^{-\frac{1}{4}},
\ea
when $1\leq l+2m\leq K-3$ and when $3\leq l+m$ and $l+2m\leq K+1$
\ba\label{known_psig}
\|(\psi_t,\psi_x)(t)\|_{L^{p}(\RM)}
&\lesssim E_0 (1+t)^{-\frac{1}{2}(1-1/p)}\\
\|(\psi_{tt},\psi_{tx},\psi_{xx})(t)\|_{L^{p}(\RM)}
&\lesssim E_0 \ln(2+t)(1+t)^{-\frac{3}{4}}\\
\|\d_x^l\d_t^m\psi(t)\|_{L^{p}(\RM)}
&\lesssim E_0 (1+t)^{-\frac{3}{4}},\\
\ea
where $v$ is as in \eqref{pertvar}.
\epr

\begin{proof}
Estimate \eqref{known_vg}(i) is \eqref{mainest}(i) and \eqref{known_vg}(iii) follows from the bound on $\zeta$. Estimate \eqref{known_vg}(ii) is proved incorporating extra decay obtained from differentiation in \eqref{sest2}.

Estimate \eqref{known_psig}(i) is \eqref{mainest}(ii). Bounds \eqref{known_psig}(ii)-(iii) follow along the lines of \eqref{sestad2} benefiting from extra decay provided by differentiation.
\end{proof}

\section{Behavior}\label{s:gbehavior}

The first step in going from Theorem~\ref{oldmain} to Theorem~\ref{main} is to obtain a refined expansion of the solution operator $S(t)=e^{tL}$, allowing us to split further the integral equation \eqref{vimplicit}. This proceeds by refining the spectral expansions from which they ultimately derive.

\subsection{Refined spectral expansion and decompositions}\label{s:refg}
\bl[\cite{NR1,NR2}]\label{keylemg}
Assuming (H1)--(H3), we may choose a parametrization in such a way that, 
for the quantities involved in \eqref{phibifg},
\be\label{kderivg}
\partial_\xi q_{n+1}(0)= i\ks \d_kU_{|(\Ms,\ks)}.
\ee
\el

\begin{proof}
We have already observed that the proof of Proposition \ref{whiteig} provides
$$
L_0(\d_\xi q_{n+1}(0)-i\ks\d_kU_{|(\Ms,\ks)})\in \Sigma_0.
$$
The latter point implies that $\d_\xi q_{n+1}(0)-i\ks\d_kU_{|(\Ms,\ks)}\in\Sigma_0$. Therefore, we only need to show that, for $1\leq j\leq n+1$, $\langle q_j(0),\d_\xi q_{n+1}(0)\rangle=\langle q_j(0),i\ks \d_kU_{|(\Ms,\ks)}\rangle$.

Moreover, the proof of Proposition \ref{whiteig} also yields $\langle q_j(0),\d_\xi q_{n+1}(0)\rangle=0$ for all $j\neq n+1$. We may assume that this relation holds for $j=n+1$ by normalizing $q_{n+1}(\xi)$ according to $\langle\tilde q_{n+1}(0), q_{n+1}(\xi)\rangle=1$. Indeed, once this is done, expanding to first order in $\xi$ this normalization and the duality relation $\langle\tilde q_{n+1}(\xi), q_{n+1}(\xi)\rangle=1$ provides the desired cancellation.

Now from the second part of \eqref{alg_order1}(iii), we already know $\langle q_j(0),i\ks \d_kU_{|(\Ms,\ks)}\rangle=0$ for all $j\neq (n+1)$. Moreover, up to changing parametrization by a $k$-dependent shift, we may add to $\d_kU_{|(\Ms,\ks)}$ a suitable multiple of $\bar U'$ and get
\be\label{norm_kg}
\langle\tilde q_{n+1}(0),i\ks \d_kU_{|(\Ms,\ks)}\rangle\ =\ 0.
\ee
This yields \eqref{kderivg}.
\end{proof}

Accordingly, following \cite{JNRZ2}, we refine \eqref{decompg} and re-express $S(t)$ as
\be\label{decomprg}
S(t)=R^{\rm p}(t)+ R^M(t)+ \tilde R(t),
\ee
where
\be\label{Rpg}
R^{\rm p}(t)=(\bar U'+\partial_k \bar U\ \ks\partial_x)\, e_{n+1}\cdot s^{\rm p}(t),
\ee
with $s^{\rm p}$ as in \eqref{spg},
\be\label{RMg}
R^M(t):=\partial_M \bar U\cdot s^M(t),\qquad
s^M(t):=\sum_{j=1}^{n+1}s^{M}_j(t),\qquad
s_j^M(t):=\begin{pmatrix}\Id_n& \begin{array}{c}0\\ \vdots\\ 0\end{array}\end{pmatrix}\ks\d_x s_j^{\rm p}(t),
\ee
and
\ba\label{tildeSrg}
(\tilde R(t)g)(x)&:=
\int_{-\pi}^{\pi} e^{i\xi x}(1-\alpha(\xi))(e^{tL_\xi}  \check g(\xi))(x)d\xi
+\int_{-\pi}^{\pi} e^{i\xi x}\alpha(\xi)(e^{tL_\xi} \tilde \Pi(\xi)\check g(\xi))(x)d\xi\\
&\displaystyle\
+\int_{-\pi}^{\pi}e^{i\xi x}\alpha(\xi)\quad\sum_{j=1}^{n+1}\ e^{\lambda_{j}(\xi) t}\
\frac{\phi_j^{quad}(\xi,x)}{i\ks\xi}\ \langle\tilde\phi_{j}(\xi),\check g(\xi)\rangle_{L^2([0,1])}\ d\xi,\\
\ea
with
$$
\phi_j^{quad}(\xi,x)\ =\ \phi_{j}(\xi,x)-(i\ks\xi)\sum_{l=1}^n\beta_l^{(j)}(\xi)q_l(0,x)-\beta_{n+1}^{(j)}(\xi)\big(q_{n+1}(0,x)+\xi\d_\xi q_{n+1}(0,x)\big),
$$
where $\alpha$ is an already introduced smooth cutoff supported where $|\xi|\leq\xi_0$, $\tilde\Pi(\xi)$ is, as defined in \eqref{Pig}, the complementary
projection of $\Pi^{\rm p}(\xi)$, the spectral projection on $\Sigma_\xi:=\Span\{\phi_j(\xi)\}_{j=1}^{n+1}$.

\medskip

With \eqref{Rpg}-\eqref{RMg} in mind, for $\tilde{u}$ satisfying \eqref{cons}, 
we may refine the nonlinear decomposition \eqref{pertvar} into
\ba\label{pertvarrg}
z(x,t) &=\tilde{u}(x-\psi(x,t),t)-\bar U(x)-
\d_k\bar U(x) \, \ks\psi_x(x,t)
-\partial_M\bar U(x)  \cdot M(x,t)\\
&=v(x,t)- \d_k\bar U(x) \, \ks\psi_x(x,t) -\partial_M\bar U(x)\cdot M(x,t)
\ea
where $\psi(x,t)$ still satisfies \eqref{psidefg}, and $M(x,t)$ is defined through \eqref{closedrg}(iii) just below. Further, recall $d_0:=\tilde u_0(\cdot-h_0(\cdot))-\bar U$, and let $\chi$ be the smooth cutoff function of \eqref{psidefg}--\eqref{closedg}, with $\chi(t)=0$ for $t\le 1/2$, and $\chi(t)=1$ for $t\ge 1$. 
With definitions \eqref{decomprg}--\eqref{RMg}, we have the following lemma.

\begin{lemma}\label{lem:canestrg}
For $\CalN$ as in \eqref{veq}--\eqref{eqn:S}, the nonlinear residual $z$ defined in \eqref{pertvarrg} satisfies
\ba\label{closedrg}
z(t)&=\tilde R(t) (d_0+h_0\bar U')+ \int_0^t \tilde R(t-s) \N(s) ds\\
&\quad+(1-\chi(t))\Big(R^{\rm p}(t)(d_0+h_0\bar U')-h_0\bar U'-\d_k\bar U\,\ks\d_x h_0+\int_0^t R^{\rm p}(t-s)\N(s)ds\Big),\\
\psi(t)&=e_{n+1}\cdot s^{\rm p}(t)(d_0+h_0\bar U')+\int_0^t e_{n+1}\cdot s^{\rm p}(t-s)\N(s)ds\\
&-(1-\chi(t))\Big(e_{n+1}\cdot s^{\rm p}(t)(d_0+h_0\bar U')-h_0+\int_0^t e_{n+1}\cdot s^{\rm p}(t-s)\N(s)ds\Big),\\
M(t)&=s^M(t)(d_0+h_0\bar U')+\int_0^t s^M(t-s)\N(s)ds.\\
\ea
\end{lemma}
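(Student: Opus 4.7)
The plan is to derive \eqref{closedrg}(i) as a direct algebraic consequence of the Duhamel representation \eqref{vimplicit}, the refined decomposition \eqref{decomprg} of $S(t)$, the definition \eqref{pertvarrg} of $z$, and the already-stated formulas \eqref{closedrg}(ii)--(iii) defining $\psi$ and $M$. Starting from
\[
v(t)+\psi(t)\bar U'\ =\ S(t)(d_0+h_0\bar U')+\int_0^tS(t-s)\mathcal{N}(s)\,ds,
\]
I substitute $v=z+\ks\psi_x\,\partial_k\bar U+M\cdot\partial_M\bar U$ on the left and $S(t)=R^{\rm p}(t)+R^M(t)+\tilde R(t)$ on the right, then solve for $z(t)$.

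Two explicit cancellations then do the work. First, from the definition \eqref{RMg} of $R^M$ together with \eqref{closedrg}(iii), the $R^M$ contribution equals exactly $\partial_M\bar U\cdot M(t)$, which annihilates the $M\cdot\partial_M\bar U$ term. Second, abbreviating
\[
\tilde\psi(t)\ :=\ e_{n+1}\cdot s^{\rm p}(t)(d_0+h_0\bar U')+\int_0^te_{n+1}\cdot s^{\rm p}(t-s)\mathcal{N}(s)\,ds,
\]
the $R^{\rm p}$ contribution is $\bar U'\,\tilde\psi(t)+\ks\,\partial_k\bar U\,\partial_x\tilde\psi(t)$ by \eqref{Rpg}, while \eqref{closedrg}(ii) amounts to the identity $\psi(t)=\chi(t)\tilde\psi(t)+(1-\chi(t))h_0$. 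Since $\chi$ does not depend on $x$, one obtains at once
\[
\tilde\psi-\psi=(1-\chi)(\tilde\psi-h_0),\qquad \partial_x\tilde\psi-\psi_x=(1-\chi)(\partial_x\tilde\psi-\partial_x h_0),
\]
so that the combination $\bar U'\tilde\psi+\ks\,\partial_k\bar U\,\partial_x\tilde\psi-\psi\bar U'-\ks\psi_x\partial_k\bar U$ collapses to
\[
(1-\chi(t))\bigl[\bar U'(\tilde\psi(t)-h_0)+\ks\,\partial_k\bar U\,(\partial_x\tilde\psi(t)-\partial_x h_0)\bigr].
\]
Rewriting $\bar U'\tilde\psi+\ks\,\partial_k\bar U\,\partial_x\tilde\psi$ back in $R^{\rm p}$ form inside this expression reproduces exactly the second line of \eqref{closedrg}(i), and the $\tilde R$ pieces supply the first line.

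The only thing requiring care is the bookkeeping of the initial-layer cutoff: one must check that the correction term carrying the prefactor $(1-\chi(t))$ in \eqref{closedrg}(i) is exactly what is needed so that $z$, as defined through \eqref{pertvarrg}, is simultaneously compatible with the Duhamel identity for $v+\psi\bar U'$ at all $t\ge 0$ and with the initial constraint $\psi(\cdot,0)=h_0$ built into \eqref{closedrg}(ii). There is no analytic content beyond this algebraic verification; the significance of the lemma lies in the \emph{form} of the equations, which cleanly separates the faster-decaying remainder $\tilde R$ from the slow modulation pieces $R^{\rm p}$, $R^M$ that have been absorbed, respectively, into the definitions of $\psi$ and $M$ and are therefore available to drive the nonlinear iteration of Section \ref{s:gbehavior}.
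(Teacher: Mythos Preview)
Your proof is correct and essentially the same as the paper's: both are pure algebraic bookkeeping that substitutes the refined decomposition $S=R^{\rm p}+R^M+\tilde R$ and the definitions of $\psi$, $M$, $z$ into the Duhamel relation, then tracks the $(1-\chi)$ initial-layer correction. The only cosmetic difference is that the paper starts from the already-derived $v$-equation \eqref{closedg} and uses the identity $\tilde R=\tilde S-\partial_k\bar U\,\ks\partial_x\,e_{n+1}\cdot s^{\rm p}-\partial_M\bar U\cdot s^M$, whereas you start one step earlier from \eqref{vimplicit} itself; the algebra is identical.
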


\begin{proof}
Recall, \eqref{psidefg}--\eqref{closedg}, that we have chosen $\psi$ so that $\psi$ and $v$ satisfy both \eqref{closedrg}(ii) and
\ba\label{oldnlg}
v(t)&=\tilde S(t) (d_0+h_0\bar U')+ \int_0^t \tilde S(t-s) \N(s) ds\\
&\quad
+(1-\chi(t))\Big(\bar U' e_{n+1}\cdot s^{\rm p}(t)(d_0+h_0\bar U')-h_0\bar U'+\int_0^t \bar U' e_{n+1}\cdot s^{\rm p}(t-s)\N(s)ds\big),\\
\ea
where $\tilde S(t)=S(t)-\bar U' e_{n+1}\cdot s^{\rm p}(t)$. Using now $z(t)=v(t)-\d_k\bar U\,\ks\psi_x(t)-\partial_M\bar U\cdot M(t)$,
$$
\tilde R(t)=\tilde S(t)-\d_k\bar U \, \ks\d_x e_{n+1}\cdot s^{\rm p}(t)-\partial_M\bar U\cdot s^M(t),
$$
and $R^{\rm p}(t)=(\bar U'+\d_k\bar U\,\ks \d_x)\,e_{n+1}\cdot s^{\rm p}(t)$, $R^M(t)=\partial_M \bar U\cdot s^M(t)$,
equation \eqref{closedrg}(i) follows from \eqref{closedrg}(ii)-(iii) and \eqref{oldnlg}.
\end{proof}

We now establish the refined linear bounds needed to estimate the terms involved in \eqref{closedrg}

\subsection{Refined linear stability estimates}\label{s:linearg}

\subsubsection{Refined basic estimates}

\bpr\label{greenbdsrg}
Under assumptions (H1)-(H3) and (D1)-(D3), for all $t\geq0$, $1\le j\le n$,\\ $2\leq p\leq \infty$,
and for some $\eta>0$, $0\leq l+2m  \leq K+1$, and $2\le p\le \infty$,
\begin{align}\nonumber
\left\|\d_x^l \d_t^m s^M_j(t)\,g\right\|_{L^p(\RM)}
&\lesssim
\min
\begin{cases}
(1+t)^{-\frac{1}{2}(1-1/p)-\frac{l+m}{2}}\| g\|_{L^1(\RM)}\\
(1+t)^{-\frac{1}{2}(1/2-1/p)-\frac{l+m}{2}}\|g\|_{L^2(\RM)}\\
\end{cases}
\end{align}
\begin{align}\label{undiffeqrg}
\left\|\d_x^l \d_t^m \tilde R(t)  g \right\|_{L^p(\RM)}
&\lesssim
\min
\begin{cases}
e^{-\eta t}\|g\|_{H^{l+2m+1}(\RM)}+(1+t)^{-\frac{1}{2}(1-1/p)-\frac{1}{2}}\| g\|_{L^1(\RM)}\\
e^{-\eta t}\|g\|_{H^{l+2m+1}(\RM)}+(1+t)^{-\frac{1}{2}(1/2-1/p)-\frac12}\|g\|_{L^2(\RM)}\\
\end{cases}
\end{align}
and, for $1\le r\le K+1$,
\begin{align}\label{finalgsmg}
\left\|\d_x^l \d_t^m s^M_j(t) \d_x^r g\right\|_{L^p(\RM)}
&\lesssim
\min
\begin{cases}
(1+t)^{-\frac{1}{2}(1-1/p)-\frac{l+m}{2}-\frac{1}{2}}\| g\|_{L^1(\RM)}\\
(1+t)^{-\frac{1}{2}(1/2-1/p)-\frac{l+m}{2}-\frac{1}{2}}\|g\|_{L^2(\RM)}\\
\end{cases}
\end{align}
\begin{align}\label{finalgrg}
\left\|\d_x^l \d_t^m \tilde R(t) \d_x^r g\right\|_{L^p(\RM)}
&\lesssim
\min
\begin{cases}
e^{-\eta t}\|\d_x^r g\|_{H^{l+2m+1}(\RM)}+(1+t)^{-\frac{1}{2}(1-1/p)-1}\| g\|_{L^1(\RM)}\\
e^{-\eta t}\|\d_x^r g\|_{H^{l+2m+1}(\RM)}+(1+t)^{-\frac{1}{2}(1/2-1/p)-1}\|g\|_{L^2(\RM)}\\
\end{cases}.
\end{align}
\epr

\begin{proof}
Bounds on $s^M_j$ follow directly from known bounds on $s^{\rm p}_j$, whereas the proofs of estimates on $\tilde R$ are completely similar to the proofs of bounds on $\tilde S$ in Proposition \ref{greenbdsg}, with extra decay coming from a higher-order expansion of $\phi_j(\xi)$, leading to an extra $\xi$ factor in the third term of \eqref{tildeSrg}.
\end{proof}

\subsubsection{Refined linear modulation bounds}

\bpr\label{modproprg}
Under assumptions (H1)--(H3) and (D1)--(D3),
for all $t\geq0$, $2\leq p\leq \infty$, and $0\leq l+2m \leq K+1$,
\be\label{Spmodrg}
\|\partial_x^l\partial_t^m s^M_j(t)(h_0\bar U')\|_{L^p(\RM)}
\lesssim
(1+t)^{-\frac{1}{2}(1-1/p)-\frac{l+m}{2}}\|\partial_x h_0\|_{L^1 (\RM)
},
\ee
\be\label{tildeSmodrg}
\|\partial_x^l \partial_t^m \tilde R (t)(h_0\bar U')\|_{L^p(\RM)}
\lesssim (1+t)^{-\frac{1}{2}(1-1/p)-\frac12}
\|\partial_x  h_0\|_{L^1 (\RM) \cap H^{l+2m+1} (\RM)
}.
\ee
\epr

\begin{proof}
Again, bounds on $s^M_j$ follow directly from known bounds on $s^{\rm p}_j$, whereas the proof of estimates on $\tilde R$ is completely similar to the proof of bounds on $\tilde S$ in Proposition \ref{modproprg}, with extra decay coming 
from the higher-order expansion of $\phi_j(\xi)$, 
leading to an extra $\xi$ factor in the third term of \eqref{tildeSrg}.
\end{proof}

\subsection{Refined nonlinear stability estimates}\label{s:pertrg}
With these preparations, we obtain the following refinement of Theorem \ref{oldmain}.

\begin{proposition}\label{stepg}
Under the assumptions of Theorem \ref{oldmain}, for all $t\geq0$, $2\le p\le \infty$,
\ba\label{sharpestg}
\|z(t)\|_{L^p(\RM)}
&\lesssim E_0
\ln(2+t)\ (1+t)^{-\frac{3}{4}}\\
\|\d_x^l\d_t^mz(t)\|_{L^p(\RM)}
&\lesssim E_0
(1+t)^{-\frac{3}{4}},\qquad 1\leq l+2m\leq K-3
\ea
and
\ba\label{Mestg}
\|M(t)\|_{L^{p}(\RM)}
&\le CE_0 (1+t)^{-\frac{1}{2}(1-1/p)}\\
\|(M_t,M_x)(t)\|_{L^{p}(\RM)}
&\le CE_0 \ln(2+t)(1+t)^{-\frac{3}{4}}\\
\|\d_x^l\d_t^mM(t)\|_{L^{p}(\RM)}
&\le CE_0 (1+t)^{-\frac{3}{4}},\qquad l+m\geq 2,\ l+2m\leq K+1.\\
\ea
\end{proposition}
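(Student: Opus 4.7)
The plan is to iterate the integral representations \eqref{closedrg}(i) and \eqref{closedrg}(iii) using the refined linear estimates of Propositions~\ref{greenbdsrg} and~\ref{modproprg}, combined with the stability bounds on $(v,\psi_t,\psi_x)$ already recorded in Proposition~\ref{known_rkg}. The first step is to sharpen the source bounds \eqref{sQRSbds}: exploiting the bilinear structure of $\cQ$, $\cR$, $\cS$ together with $L^4$ interpolation between the $L^2$ and $L^\infty$ estimates from \eqref{known_vg}-\eqref{known_psig} yields
\[
\|(\cQ,\cR_1,\cS)(s)\|_{L^1(\RM)\cap L^2(\RM)}+\|\cR_2(s)\|_{L^1(\RM)\cap H^1(\RM)} \lesssim E_0^2(1+s)^{-\frac{3}{4}},
\]
complemented by $H^{K-1}$ bounds decaying as $(1+s)^{-1/2}$ inherited from the damping estimate.

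Next, I would substitute these into \eqref{closedrg}(iii). The linear contribution $s^M(t)(d_0+h_0\bar U')$ is controlled by the $L^1$ bound for $s^M$ in Proposition~\ref{greenbdsrg} and by the modulation bound \eqref{Spmodrg}, with the latter setting the overall rate $(1+t)^{-\frac12(1-1/p)}$. For the Duhamel integral, I would convert the $\partial_t\cS$ contribution via \eqref{dt_S} into a boundary term plus a kernel-derivative convolution, then apply \eqref{finalgsmg}(ii) with the improved $L^2$ source bounds on $[t/2,t]$ and \eqref{finalgsmg}(i) with $L^1$ source bounds on $[0,t/2]$. The same strategy, with \eqref{undiffeqrg}, \eqref{tildeSmodrg}, \eqref{finalgrg} in place of their $s^M$ counterparts, then yields the bound for $z$ from \eqref{closedrg}(i), the $\tilde R$ operator delivering the extra $(1+t)^{-1/2}$ kernel decay relative to the analysis of $v$. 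For higher derivatives of $z$ and $M$, I would differentiate \eqref{closedrg}(i) and \eqref{closedrg}(iii) and invoke the derivative versions of the same linear estimates; top-order spatial control on $z$ is then extracted from the decomposition $z=v-\partial_k\bar U\,\ks\psi_x-\partial_M\bar U\cdot M$ using the derivative bounds on $v$ supplied by Proposition~\ref{known_rkg}.

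The main obstacle will be the critical Duhamel integral $\int_0^t(1+t-s)^{-1}(1+s)^{-3/4}\,ds\sim\ln(2+t)(1+t)^{-3/4}$ that arises at $p=2$ when bounding the source contribution to $z$ via \eqref{finalgrg}(ii); this precisely accounts for the logarithmic factor in \eqref{sharpestg}(i), and the analogous critical case for $(M_t,M_x)$ at $p=2$ explains the logarithm in \eqref{Mestg}(ii). Conceptually, the proof hinges on the fact that replacing the splitting of Section~\ref{s:genprep} by the refined splitting \eqref{decomprg} extracts an additional $\xi$-factor at low Floquet numbers from the quadratic-order expansion of $\phi_j(\xi)$ (enabled by \eqref{kderivg}), providing exactly enough extra temporal decay to close the estimates; the delicate bookkeeping is to verify that this gain lands in time integrals which are borderline rather than divergent, producing at worst the logarithmic loss seen above.
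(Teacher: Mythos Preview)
Your proposal is correct and follows essentially the same approach as the paper: bound $\|\cN(s)\|$ by $E_0(1+s)^{-3/4}$ via the bilinear structure and Proposition~\ref{known_rkg}, then apply the refined linear bounds of Propositions~\ref{greenbdsrg} and~\ref{modproprg} to the integral representations \eqref{closedrg}, with the critical Duhamel integral $\int_0^t (1+t-s)^{-1}(1+s)^{-3/4}\,ds$ at $p=2$ producing the logarithm. The paper's proof is more terse---it simply records $\|\cN(t)\|_{H^{K-2}}\lesssim E_0(1+t)^{-3/4}$ and writes one line for the $z$ integral---but your additional details (the $[0,t/2]$--$[t/2,t]$ split, the handling of $\partial_t\cS$ via \eqref{dt_S}) are exactly the bookkeeping implicit in that line.
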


\begin{proof}
By \eqref{known_psig} and \eqref{known_vg}, we find
$$
\|\N(t)\|_{H^{K-2}(\R)} \le CE_0 (1+t)^{-\frac{3}{4}}.
$$
Applying bounds \eqref{undiffeqrg}(i)--\eqref{finalgrg}(ii) and \eqref{Spmodrg}-\eqref{tildeSmodrg}(i)
of Propositions \ref{greenbdsrg} and \ref{modproprg} to the system \eqref{closedrg}, 
we obtain for any $2\le p\le\infty$,
$$
\begin{aligned}
\|z (t) \|_{L^p(\RM)}& \le
CE_0(1+t)^{-\frac{1}{2}(1-1/p)-\frac{1}{2}} + CE_0 \int_{0}^{t} (1+t-s)^{-\frac{1}{2}(1/2-1/p)-1}(1+s)^{-\frac{3}{4}}ds\\
&\le CE_0\ln(2+t)\ (1+t)^{-\frac{3}{4}},
\end{aligned}
$$
verifying \eqref{sharpestg}(i). Estimate \eqref{sharpestg}(ii) is proved in the same way dropping 
the log term thanks to an extra decay of $(1+t-s)^{-1/2}$ in the integral. 
Bound \eqref{Mestg} follows similarly as for the previous bounds on $\psi_x$, $\psi_t$.
\end{proof}

This verifies the estimate \eqref{refinedest}, thus validating the \emph{ansatz} \eqref{wref}. 
It now remains to establish comparisons with solutions of \eqref{whitham}. 
Note that, as explained in Appendix~\ref{W_app}, taking into account simplifications due to asymptotic equivalence of quadratic approximants and the change of variables $(\Id-\psi(\cdot,t))^{-1}$, it is sufficient to prove comparisons with solutions to system \eqref{quad_whitham},
which we do below.

\subsection{Quadratic approximation}
Before entering into comparisons with the Whitham equations, we identify now the main part of the nonlinear terms. 
To this end, we first need estimates revealing the characteristic speeds of each of the linear parts. For this purpose,
we introduce for $a_j$ as in \eqref{lambdag}
$$
D_j\ =\ \partial_t+a_j\ \ks\partial_x\ .
$$
Then setting
\be\label{eigenvectg}
V_j=\beta^{(j)}(0),\qquad \tilde V_j=\tilde\beta^{(j)}(0),
\ee
we define the following total derivatives
$$
D\ =\ \sum_{j=1}^nV_j\,D_j\,\tilde V_j^T\ .
$$
Note that, more compactly, we actually have
$$
D\ =\ \d_t+\left(\bp dF_{|(\Ms,\ks)}\\ -d\omega_{|(\Ms,\ks)}\ep-\cs\Id\right)\ks\d_x\
\ =\ \d_t+A_*\ks\d_x.
$$

\bl\label{cancelg}
Assuming (H1)-(H3) and (D1)-(D3), for all $t\geq0$, $2\leq p\leq \infty$,
\begin{align}\nonumber
\left\|\d_x^l\d_t^m D s^{\rm p}(t)g\right\|_{L^p(\RM)}
\lesssim
\min
\begin{cases}
(1+t)^{-\frac12(1-1/p)-\frac{l+m}{2}-\frac12}\|g\|_{L^1(\RM)}\\
(1+t)^{-\frac12(1/2-1/p)-\frac{l+m}{2}-\frac12}\|g\|_{L^2(\RM)}\\
\end{cases},
\end{align}
\begin{align}\nonumber
\left\|\d_x^l\d_t^m D s^{\rm p}(t)(\partial_x^r g)\right\|_{L^p(\RM)}
\lesssim
\min
\begin{cases}
(1+t)^{-\frac12(1-1/p)-\frac{l+m}{2}-1}\|g\|_{L^1(\RM)}\\
(1+t)^{-\frac12(1/2-1/p)-\frac{l+m}{2}-1}\|g\|_{L^2(\RM)}\\
\end{cases}
\end{align}
when $1\leq r\leq K+1$,
\be\label{dSpmodg}
\left\|\d_x^l\d_t^m D s^{\rm p}(t)(h_0\bar U')
\right\|_{L^p(\RM)}
\lesssim(1+t)^{-\frac12(1-1/p)-\frac{l+m}{2}-\frac12} \|\partial_x h_0\|_{L^1(\RM)},
\ee
and, for $g$ periodic of period $1$,
\be\label{dSpmod_tg}
\left\|\d_x^l\d_t^mD s^{\rm p}(t)(h_0g)
\right\|_{L^p(\RM)}
\lesssim(1+t)^{-\frac12(1-1/p)-\frac{l+m}{2}} \|\partial_x h_0\|_{L^1(\RM)}.
\ee
\el

\begin{proof}
From the observation that
\ba
(&\d_x^l\d_t^mDs^{\rm p}(t) g)(x)=
\sum_{j=1}^{n+1}V_j\int_{-\pi}^{\pi} \alpha(\xi)e^{\lambda_j(\xi)t}e^{i\xi x}\frac{\lambda_j(\xi)+i\xi a_j}{i\ks\xi}(i\xi)^l\lambda_j(\xi)^m\langle\tilde\phi_j(\xi),\check g(\xi)\rangle d\xi\\
&+\sum_{j,j'=1}^{n+1}V_j\int_{-\pi}^{\pi} \alpha(\xi)e^{\lambda_{j'}(\xi)t}e^{i\xi x}\frac{\lambda_{j'}(\xi)+i\xi a_j}{i\ks\xi}(i\xi)^l\lambda_j(\xi)^m\,
\tilde V_j\cdot(\beta^{(j')}(\xi)-\beta^{(j')}(0))\,\langle\tilde\phi_{j'}(\xi),\check g(\xi)\rangle d\xi
\ea
with for all $j,j'$
$$
\lambda_j(\xi)+i\xi a_j=\cO(\xi^2),\qquad
(\lambda_{j'}(\xi)+i\xi a_j)\tilde V_j\cdot(\beta^{(j')}(\xi)-\beta^{(j')}(0))=\cO(\xi^2),
$$
the proof follows the lines of previous estimates on $s^{\rm p}_j$. In particular, the discrepancy between decay rates in \eqref{dSpmodg} and \eqref{dSpmod_tg} is due to the fact that \eqref{dSpmodg} benefits from cancellations
$$
\langle\tilde\phi_{j}(0),\bar U'\rangle\ =\ 0,\qquad \textrm{for all }\ 1\leq j\leq n+1.
$$
\end{proof}

\br\label{linear_group_velocity}
\textup{
The fact that in \eqref{spg} we have kept $\beta_j(\xi)$ instead of $\beta_j(0)$ precludes similar higher-order 
estimates; indeed, it may be readily checked that application of $D^q$ does \emph{not} enhance decay by $(1+t)^{-q}$.}
\er

The previous linear estimates may be transposed to the nonlinear level
as follows.

\begin{corollary}\label{tradecorg}
Assuming (H1)-(H3) and (D1)-(D3), the phase $\psi$ of Theorem \ref{oldmain} and the mean $M$ of Theorem \ref{main} satisfy for 
all $t\geq0$, $2\leq p\leq \infty$,
\be\label{tradeeqg}
\|\psi_t(t)+\ks dc_{|(\Ms,\ks)}(M(t),\ks\psi_x(t))\|_{L^p(\RM)}
\lesssim E_0\ln(2+t)\ (1+t)^{-3/4}
\ee
$$
\|\d_x^l\d_t^mD\,(M(t),\ks\psi_x(t))\|_{L^p(\RM)}
\lesssim E_0(1+t)^{-3/4},\qquad 0\leq l+2m+2\leq K-1\ .
$$
\end{corollary}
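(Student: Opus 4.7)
The strategy is to transpose the refined linear cancellation bounds of Lemma~\ref{cancelg} to the nonlinear level through the integral representations \eqref{closedrg}(ii)--(iii). The key algebraic observation is that the last row of $A_*$ is exactly $\ks\,(dc_{|(\Ms,\ks)})^T$---a direct consequence of $\omega=-kc$ at $(\Ms,\ks)$---so both quantities under study can be written in terms of the operator $D s^{\rm p}$, whose extra $(1+t)^{-1/2}$ decay is precisely what Lemma~\ref{cancelg} delivers.

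Concretely, from \eqref{closedrg}(ii)--(iii) we have, for $t\ge 1$ (the initial layer $0\le t\le 1$ being trivially controlled by the short-time bounds of Theorem~\ref{oldmain}),
\begin{equation*}
\psi = e_{n+1}\cdot s^{\rm p}(t)(d_0+h_0\bar U') + \int_0^t e_{n+1}\cdot s^{\rm p}(t-s)\N(s)\,ds,
\end{equation*}
\begin{equation*}
(M,\ks\psi_x)^T = \ks\d_x s^{\rm p}(t)(d_0+h_0\bar U') + \int_0^t \ks\d_x s^{\rm p}(t-s)\N(s)\,ds.
\end{equation*}
Differentiating the first in $t$, combining with $\ks dc_{|(\Ms,\ks)}\cdot$ applied to the second, and using $e_{n+1}^T A_*=\ks(dc_{|(\Ms,\ks)})^T$ to regroup, I obtain
\begin{equation*}
\psi_t+\ks dc_{|(\Ms,\ks)}\cdot(M,\ks\psi_x)=e_{n+1}\cdot D s^{\rm p}(t)(d_0+h_0\bar U')+e_{n+1}\cdot s^{\rm p}(0)\N(t)+\int_0^t e_{n+1}\cdot D s^{\rm p}(t-s)\N(s)\,ds,
\end{equation*}
and, applying $D=\d_t+A_*\ks\d_x$ directly to the second representation,
\begin{equation*}
D(M,\ks\psi_x)^T=\ks\d_x D s^{\rm p}(t)(d_0+h_0\bar U')+\ks\d_x s^{\rm p}(0)\N(t)+\int_0^t \ks\d_x D s^{\rm p}(t-s)\N(s)\,ds.
\end{equation*}
The boundary operators $e_{n+1}\cdot s^{\rm p}(0)$ and $\ks\d_x s^{\rm p}(0)$ arising from Leibniz' rule are both bounded on relevant function spaces: the $\xi=0$ singularity of the symbol of $s^{\rm p}$ is cancelled, respectively, by the identity $\sum_j e_{n+1}\cdot\beta^{(j)}(0)\tilde\nu_j^T=0$ (cf.\ the proof of \eqref{finaleundifflayerg}) and by the explicit $\d_x$.

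From here I would apply Lemma~\ref{cancelg} together with the modulation bound \eqref{dSpmodg} to the data contributions, and for the nonlinear Duhamel integrals invoke the decomposition $\N=\d_x\cQ+\d_x\cR_1+\d_x^2\cR_2+\d_t\cS$ from the proof of Theorem~\ref{oldmain}, the $L^2$ estimates $\|(\cQ,\cR_1,\cR_2,\cS)(s)\|_{L^2}\lesssim E_0(1+s)^{-3/4}$ established in \eqref{QRST}, and the integration-by-parts identity \eqref{dt_S} for the $\d_t\cS$ term (transferring the time derivative onto the kernel, absorbed by the $m=1$ case of Lemma~\ref{cancelg}). The boundary contributions are then controlled by the pointwise bounds of Propositions~\ref{known_rkg} and~\ref{stepg}; in particular, expanding $\d_t\cS=v_t\psi_x+v\psi_{xt}$, those propositions yield $\|\d_t\cS(t)\|_{L^2}\lesssim E_0^2\ln(2+t)(1+t)^{-3/4}$, comfortably within the target rate.

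The main technical point, and source of the logarithm in the first claim, is a marginal convolution at $p=2$: the $L^2$-to-$L^2$ kernel bound $\|e_{n+1}\cdot D s^{\rm p}(t-s)\d_x\cQ(s)\|_{L^2}\lesssim (1+t-s)^{-1}\|\cQ(s)\|_{L^2}$ combined with $\|\cQ(s)\|_{L^2}\lesssim E_0(1+s)^{-3/4}$ generates the critical integral $\int_0^t(1+t-s)^{-1}(1+s)^{-3/4}\,ds\sim\ln(1+t)(1+t)^{-3/4}$. For $p>2$ the kernel exponent $\tfrac12(1/2-1/p)+1$ strictly exceeds $1$ and the logarithm disappears; for the second claim the extra $\ks\d_x$ raises this exponent by $\tfrac12$, pushing it strictly above $1$ for every $p\in[2,\infty]$ and yielding the clean $E_0(1+t)^{-3/4}$ bound. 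Estimates for $(l,m)\neq(0,0)$ follow from the $(l,m)$-indexed versions of Lemma~\ref{cancelg} and standard redistribution of $t$-derivatives in Duhamel integrals, the regularity threshold $l+2m+2\le K-1$ being exactly what is needed to ensure the required high-norm control on $\N$ via Propositions~\ref{stepg} and~\ref{known_rkg}.
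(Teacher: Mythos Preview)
Your proposal is correct and follows essentially the same route as the paper, which dismisses the corollary in one line as ``completely similar to the proofs of \eqref{known_psig} and \eqref{Mestg}.'' You have simply made explicit what that sentence means: apply Lemma~\ref{cancelg} (in place of the basic bounds of Propositions~\ref{greenbdsg}--\ref{modpropg}) to the integral representations \eqref{closedrg}(ii)--(iii), after the algebraic observation $e_{n+1}^TA_*=\ks\,dc_{|(\Ms,\ks)}$ that converts the target quantity into $e_{n+1}\cdot D s^{\rm p}$. Your identification of the marginal $p=2$ convolution as the source of the logarithm, and of the extra $\ks\d_x$ as the reason the second estimate is log-free, is exactly right; the treatment of the Leibniz boundary term $e_{n+1}\cdot s^{\rm p}(0)\N(t)$ via \eqref{finaleg}(ii) for the $\d_x(\cQ+\cR)$ part and \eqref{finaleundifflayerg}(ii) at $t=0$ for the $\d_t\cS$ part is also correct and gives the needed $E_0^2(1+t)^{-3/4}$ rate.
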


\begin{proof}
Completely similar to the proofs of \eqref{known_psig} in Proposition \ref{known_rkg} and \eqref{Mestg} in Proposition~\ref{stepg}.
\end{proof}

Now the next lemma pulls out the dominant part of $\mathcal{N}(t)$.

\bl\label{sharplemmag}
Assuming (H1)--(H3), (D1)--(D3), we have
$$
\begin{aligned}
\mathcal{N}(t)&=\partial_x\
\left(M(t)^T f^{\rm p}_{MM}\cdot M(t)+\ks\psi_x(t)f^{\rm p}_{kM}\cdot M(t)+f^{\rm p}_{kk}(\ks\psi_x(t))^2+ r_1(t)\right)\\
&+\partial_t\
\left(M(t)^T g^{\rm p}_{MM}\cdot M(t)+\ks\psi_x(t)g^{\rm p}_{kM}\cdot M(t)+g^{\rm p}_{kk}(\ks\psi_x(t))^2+r_2(t)\right),
\end{aligned}
$$
where $f^{\rm p}_{jk}$, $g^{\rm p}_{jk}$ are periodic of period $1$
and $\|r_j(t)\|_{L^1(\RM)}\lesssim E_0^2\ln(2+t)(1+t)^{-1}$, explicitly
$$
\begin{aligned}
f^{\rm p}_{MM}&=-\ks\,\frac{1}{2}d^2f(\bar U)(\partial_M\bar U,\partial_M\bar U)+\ks\d_M\cs\ \d_M\bar U,\\
f^{\rm p}_{kM}&=-\ks\,d^2f(\bar U)(\partial_k\bar U,\partial_M\bar U)+\ks\d_k\cs\ \d_M\bar U +\ks\d_k\bar U\ \d_M\cs+\ks\d_M\bar U',\\
f^{\rm p}_{kk}&=-\ks\,\frac12d^2f(\bar U)(\d_k\bar U,\d_k\bar U)+\ks\d_k\cs\ \d_k\bar U+\bar U'+\ks\d_k\bar U'\\
\end{aligned}
$$
and
$$
\begin{aligned}
g^{\rm p}_{MM}&= 0, \qquad
g^{\rm p}_{kM}=\frac{1}{\ks}\partial_M \bar U,\qquad
g^{\rm p}_{kk}&=\frac{1}{\ks}\partial_k\bar U .
\end{aligned}
$$
\el

\begin{proof} 
This follows by a direct, but tedious, computation, using estimates \eqref{known_psig}, \eqref{sharpestg}, \eqref{Mestg} and \eqref{tradeeqg}, formulas from Lemma~\ref{lem:canest} and $v= z + \d_k\bar U\,\ks\psi_x +\partial_M\bar U\cdot M$.
\end{proof}

\subsection{Comparison with the linearized Whitham equations}
Now we begin comparisons with the Whitham equations by proving 
that the dynamics described by $s^{\rm p}(t)$ are well-approximated by the evolution 
of the linearized Whitham equations. To this end, we linearize about $w\equiv 0$ the quadratic approximant of the second-order Whitham system \eqref{whitham} and get
\be\label{lin_quad_W}
w_t+\ks A_*w_x=\ks^2\tilde B_* w_{xx}.
\ee
We begin with some observations about \eqref{lin_quad_W}.

\medskip

\bl[\cite{NR1,NR2}]\label{dlemg}
Assuming (H1)--(H3), the coefficients $a_j$, $b_j$ of the expansion $\lambda_j(\xi)=-i\ks\xi a_j+(i\ks\xi)^2 b_j+\cO(\xi^3)$ given in \eqref{lambdag} are given by the eigenvalues of the simultaneously-diagonalized coefficient matrices $A_*$, $\tilde B_*$, respectively, of the linearized quadratic approximant \eqref{lin_quad_W}.
\el

\begin{proof} 
The first-order relation has already been established above in Proposition \ref{whiteig}. The second-order relation essentially follows from
the fact that the processes of linearization and formal expansion commute. The exact computations depend on whether we obtained the modulation system with the strategy followed in Section \ref{firstway} or with the one in Section \ref{secondway}. In the latter case, computations are rather light while in the former case they are quite tedious but completely similar to those in \cite{NR1,NR2} in the context of the Saint-Venant and Korteweg-de Vries/Kuramoto-Sivashinsky equations. See also the proof given in \cite{FST} in the case of the Kuramoto-Sivashinsky equation.
\end{proof}

\br\label{diffrmk}
\textup{
It is worth mentioning that the diffusion matrix of \eqref{whitham} is uniquely determined {\it only up to asymptotic equivalence}, i.e., it is $\tilde B_*$ and not $B_*$ that is uniquely returned by the process of formal expansion. The relation between $b_j$ and $\sigma(\tilde B_*)$ was used
in \cite{FST} as a means to verify the spectral stability assumption (D2) for the critical eigenvalues $\Re\lambda_j (\xi)\sim \ks^2\Re 
b_j\xi^2$.  However, in the more recent studies \cite{BJNRZ3,BJNRZ4}, we find it more convenient to instead verify (D2) entirely by numerical Evans function study, determining $a_j$ $b_j$ at the same time by numerical Taylor expansion.
}
\er

\medskip

\bl\label{lincong}
Assuming (H1)--(H3), the solution operator $\Sigma(t)$
of the linearized quadratic approximant \eqref{lin_quad_W} is given by 
$$
\Sigma(t)\ =\ \sum_{j=1}^{n+1} \sigma_j(t) V_j\tilde V_j^T,
$$
where the  $\sigma_j(t)$ are solution operators of the convected heat equations
$$
u_t+a_{j}\ks u_x=b_{j} \ks^2u_{xx},
$$
and the vectors $V_j$ and $\tilde V_j$ are defined in \eqref{eigenvectg}.
\el

\begin{proof}
This follows by a straightforward diagonalization argument.
\end{proof}

\medskip

We now come to linear comparisons, i.e. we aim to verify that the dynamics described by $s^p(t)$ are well-approximated
by the evolution of the linearized Whitham equations \eqref{lin_quad_W}. 
To write the comparison results as compactly as possible, we introduce the operator $\cI$ defined by
\be
\widehat{\cI g}(\xi)\ =\ \frac{1}{i\xi}\widehat{g}(\xi).
\ee
Note that $\cI$ can be identified as the convolution with a step function, and hence takes $L^1(\RM)$ into $L^\infty(\RM)$.

\bpr\label{heatcompg}
Assuming (H1)--(H3) and (D1)--(D3), let $\Sigma (t)$ be the solution operator of \eqref{lin_quad_W} and $g$ be a periodic function on $[0,1]$, $g\in H^1([0,1])$.
Then, for all $t\geq0$, $2\leq p\leq \infty$,
\ba\label{heatwhitham_init}
\big\|
\partial_x^l s^{\rm p}(t)&(h_0 \bar U'+d)
-\frac{1}{\ks}\Sigma(t)\cI\d_x^l\bp  d-(\bar U-\langle\bar U\rangle)\d_x h_0\\ \ks\partial_x h_0\ep \big\|_{L^p(\RM)}\\
&
\lesssim
\begin{cases}
(1+t)^{-\frac{1}{2}(1-1/p)-\frac{1}{2}}t^{-\frac{l-1}{2}}(\|\d_xh_0\|_{L^1(\RM)\cap L^\infty(\RM)}+\|d\|_{L^1(\RM)\cap L^\infty(\RM)}),&\quad l\geq 1\\
(1+t)^{-\frac{1}{2}(1-1/p)}(\|\d_xh_0\|_{L^1(\RM)\cap L^2(\RM)}+\|d\|_{L^1(\RM)\cap L^2(\RM)}),&\quad l=0\\
\end{cases};
\ea
\ba\label{heatwhitham_flux}
\big\|
\partial_x^ls^{\rm p}(t)\d_x(h_0 g)
&-\frac{1}{\ks}\Sigma(t)\cI\d_x^l\bp[\displaystyle\langle g\rangle-i\sum_{l=1}^n\langle\d_\xi\tilde q_l(0),\d_x g\rangle e_l]\d_xh_0\\
\ks\langle \tilde q_{n+1}(0),\d_x g\rangle\d_xh_0\ep \big\|_{L^p(\RM)}\\
&\lesssim
\begin{cases}
(1+t)^{-\frac{1}{2}(1-1/p)-\frac12}t^{-\frac{l-1}{2}}\|\d_xh_0\|_{L^1(\RM)\cap L^\infty(\RM)},&\quad l\geq 1\\
(1+t)^{-\frac{1}{2}(1-1/p)}\|\d_xh_0\|_{L^1(\RM)\cap L^2(\RM)},&\quad l=0\\
\end{cases};
\ea
\ba\label{heatwhitham_time}
\big\|
\partial_x^{l+1} s^{\rm p}(t)(h_0 g)
&-\frac{1}{\ks}\Sigma(t)\cI\d_x^l\bp  \langle g\rangle \d_xh_0\\0\ep \big\|_{L^p(\RM)}\\
&\lesssim
\begin{cases}
(1+t)^{-\frac{1}{2}(1-1/p)-\frac12}t^{-\frac{l-1}{2}}\|\d_xh_0\|_{L^1(\RM)\cap L^\infty(\RM)},&\quad l\geq 1\\
(1+t)^{-\frac{1}{2}(1-1/p)}\|\d_xh_0\|_{L^1(\RM)\cap L^2(\RM)},&\quad l=0\\
\end{cases}.
\ea
\epr

\begin{proof}
{\it (i) (Proof of \eqref{heatwhitham_init}, case $h_0=0$).} Following the proof of \eqref{finaleundiffg} in Proposition \ref{greenbdsg}, we obtain that the difference between $\partial_x^l s^{\rm p}(t)(d)$ and the function
\be\label{aux_LF}
x\longmapsto \sum_{j=1}^{n+1}\int_{-\pi}^{\pi}e^{i\xi x}e^{(-i\ks a_j\xi-\ks^2 b_j\xi^2)t}\frac{1}{i\ks\xi}(i\xi)^l\beta^{(j)}(0)
\langle\tilde\phi_{j}(0,\cdot), \check d(\xi,\cdot)\rangle_{L^2([0,1])}d\xi
\ee
is bounded in $L^p(\RM)$ by $(1+t)^{-\frac{1}{2}(1-1/p)-\frac{l}{2}}\|d\|_{L^1(\RM)}$. Since for $1\leq j\leq n+1$ and $|\xi|\leq\pi$
we have
$$
\beta^{(j)}(0)\langle\tilde\phi_{j}(0,\cdot), \check d(\xi,\cdot)\rangle_{L^2([0,1])}
\ =\ V_j\,\tilde V_j\cdot\bp \langle\check d(\xi,\cdot)\rangle\\0\ep
\ =\ V_j\,\tilde V_j\cdot\bp \widehat d(\xi)\\0\ep,
$$
the function in \eqref{aux_LF} is recognized to be the low-frequency part of $\frac{1}{\ks}\Sigma(t)\cI\d_x^l\bp  d\\0\ep$,
whose high-frequency part is still to be bounded. 
When $l=0$, we bound it in $L^p(\RM)$ by $Ce^{-\eta t}\|d\|_{L^{1/(1/p+1/2)}(\RM)}$  using, for $1\leq j\leq n+1$
$$
\left\|\xi\mapsto e^{-\ks^2b_j\xi^2 t}\xi^{-1}\right\|_{L^2(\RM\setminus[-\pi,\pi])}\lesssim e^{-\eta t}
$$
for some $\eta>0$. When $l\geq 1$, however, we bound the high-frequency part of $\frac{1}{\ks}\Sigma(t)\cI\d_x^l\bp  d\\0\ep$ in $L^p(\RM)$ by $Ct^{-\frac{l-1}{2}}e^{-\eta t}\|d\|_{L^p(\RM)}$, recognizing it as the convolution of $d$ with a kernel that is bounded pointwise (using Haussdorff-Young estimates) by
$$
x\ \longmapsto\ C\ t^{-\frac{l}{2}}e^{-\eta\,t}\frac{1}{1+\frac{x^2}{t}},
$$
which is bounded in $L^1(\RM)$ by $Ct^{-\frac{l-1}{2}}e^{-\eta\,t}$, for some $\eta>0$.

\medskip

{\it (ii) (Proof of \eqref{heatwhitham_init}, case $d=0$).} Following the proof of \eqref{Spmodg} in Proposition \ref{modpropg}, we obtain that the difference between $\partial_x^l s^{\rm p}(t)(h_0\bar U')$ and
\ba\label{aux_LFmod}
x\longmapsto &\sum_{j=1}^{n+1}\int_{-\pi}^{\pi}e^{i\xi x}e^{(-i\ks a_j\xi-\ks^2 b_j\xi^2)t}\frac{1}{i\ks\xi}(i\xi)^l\beta^{(j)}(0)
\langle\tilde\phi_{j}(0,\cdot),\bar U' [\check h_0(\xi,\cdot)-\widehat h_0(\xi)]\rangle_{L^2([0,1])}d\xi\\
&+\sum_{j=1}^{n+1}\int_{-\pi}^{\pi}e^{i\xi x}e^{(-i\ks a_j\xi-\ks^2 b_j\xi^2)t}\frac{1}{i\ks\xi}(i\xi)^l\beta^{(j)}(0)
\langle\tilde\phi_{j}(0,\cdot)+\xi\,\d_\xi\tilde\phi_{j}(0,\cdot),\bar U' \widehat h_0(\xi)\rangle_{L^2([0,1])}d\xi\
\ea
is bounded in $L^p(\RM)$ by $(1+t)^{-\frac{1}{2}(1-1/p)-\frac{l}{2}}\|\d_x h_0\|_{L^1(\RM)}$. Now, as in the proofs of \eqref{Spmodcg}-\eqref{Spmodlayerg} in Proposition \ref{modpropg}, we observe that, for $1\leq j\leq n+1$ and $\xi\in[-\pi,\pi]$,
$$
\begin{aligned}
\langle\tilde\phi_{j}(0),\bar U' [\check h_0(\xi)-\widehat h_0(\xi)]\rangle
&=-\tilde V_j\cdot\bp[(\bar U-\langle\bar U\rangle)\d_xh_0]\ \widehat{}\ (\xi)\\0\ep
+i\xi\,\tilde V_j\cdot\bp\langle\bar U,[\check h_0(\xi)-\widehat{h_0}(\xi)]\rangle\\0\ep\\
\langle\tilde\phi_{j}(0)+\xi\,\d_\xi\tilde\phi_{j}(0),\bar U' \widehat h_0(\xi)]\rangle
&=\xi\langle\d_\xi\tilde\phi_{j}(0),\bar U'\rangle\ \widehat h_0(\xi)
\ =\ \tilde V_j\cdot\bp0\\\ks \d_x h_0\ep.
\end{aligned}
$$
Up to a term that is also bounded in $L^p(\RM)$ by $(1+t)^{-\frac{1}{2}(1-1/p)-\frac{l}{2}}\|\d_x h_0\|_{L^1(\RM)}$, the function in \eqref{aux_LFmod} is recognized as the low-frequency part of $\frac{1}{\ks}\Sigma(t)\cI\d_x^l\bp -(\bar U-\langle U\rangle)\d_x h_0 \\\ks\d_x h_0\ep$. The remaining high-frequency part is bounded as in the case $h_0=0$ above. This completes the proof of \eqref{heatwhitham_init} in the case $d=0$ and by linearity the proof of \eqref{heatwhitham_init} in any case.

\medskip

{\it (iii) (Proof of \eqref{heatwhitham_flux}).} Combining elements of the proofs of \eqref{finaleundiffg} in Proposition \ref{greenbdsg} and \eqref{Spmodg} in Proposition \ref{modpropg}, we obtain that the difference between $\partial_x^l s^{\rm p}(t)\d_x(h_0 g)$ and
$$
\begin{aligned}
x\longmapsto &\sum_{j=1}^{n+1}\int_{-\pi}^{\pi}e^{i\xi x}e^{(-i\ks a_j\xi-\ks^2 b_j\xi^2)t}\frac{1}{i\ks\xi}(i\xi)^l\beta^{(j)}(0)
\langle\tilde\phi_{j}(0,\cdot),i\xi\widehat h_0(\xi)g\rangle_{L^2([0,1])}d\xi\\
&+\sum_{j=1}^{n+1}\int_{-\pi}^{\pi}e^{i\xi x}e^{(-i\ks a_j\xi-\ks^2 b_j\xi^2)t}\frac{1}{i\ks\xi}(i\xi)^l\beta^{(j)}(0)
\langle-\xi\,\d_x\d_\xi\tilde\phi_{j}(0,\cdot),\widehat h_0(\xi)g\rangle_{L^2([0,1])}d\xi\
\end{aligned}
$$
is bounded in $L^p(\RM)$ by $(1+t)^{-\frac{1}{2}(1-1/p)-\frac{l}{2}}\|\d_x h_0\|_{L^1(\RM)}$. Since, for $1\leq j\leq n+1$ and $\xi\in[-\pi,\pi]$,
\ba\label{rel_calc}
\langle\tilde\phi_{j}(0,\cdot),i\xi\widehat h_0(\xi)g\rangle
&=\ \tilde V_j\cdot\bp\langle g\rangle\widehat{\d_x h_0}(\xi)\\0\ep\\
-\xi\langle\d_x\d_\xi\tilde\phi_{j}(0,\cdot),\widehat h_0(\xi)g\rangle
&=\ \tilde V_j\cdot\bp\displaystyle-i\sum_{l=1}^n\langle\d_\xi\tilde q_l(0),\d_x g\rangle e_l\\
\ks\langle \tilde q_{n+1}(0),\d_x g\rangle\ep\widehat{\d_xh_0}(\xi),
\ea
the function in \eqref{aux_LFmod} is the low-frequency part of
$$
\frac{1}{\ks}\Sigma(t)\cI\d_x^l\bp[\displaystyle\langle g\rangle-i\sum_{l=1}^n\langle\d_\xi\tilde q_l(0),\d_x g\rangle e_l]\d_xh_0\\
\ks\langle \tilde q_{n+1}(0),\d_x g\rangle\d_xh_0\ep
$$
whose high-frequency part is bounded as above.

\medskip

{\it (iv) (Proof of \eqref{heatwhitham_time}).} Following the proof of \eqref{Spmodg} in Proposition \ref{modpropg}, we obtain that the difference between $\partial_x^{l+1} s^{\rm p}(t)(h_0g)$ and
$$
x\longmapsto \sum_{j=1}^{n+1}\int_{-\pi}^{\pi}e^{i\xi x}e^{(-i\ks a_j\xi-\ks^2 b_j\xi^2)t}\frac{1}{i\ks\xi}(i\xi)^{l+1}\beta^{(j)}(0)
\langle\tilde\phi_{j}(0,\cdot), \widehat{h_0}(\xi)g\rangle_{L^2([0,1])}d\xi
$$
is bounded in $L^p(\RM)$ by $(1+t)^{-\frac{1}{2}(1-1/p)-\frac{l}{2}}\|\d_x h_0\|_{L^1(\RM)}$. Again this function is the low-frequency part of the expected term, whose high-frequency part may be bounded as in the proof of \eqref{heatwhitham_init} in the case $h_0=0$.
\end{proof}

Note that, since $A_*$ and $\Sigma(t)$ commute, combining \eqref{heatwhitham_time} with \eqref{dSpmod_tg} 
we obtain for any periodic function $g$ the bound
$$
\begin{aligned}
\|\d_x^l \d_ts^{\rm p}(t)(h_0 g)&+\Sigma(t) A_*\d_x^l\bp\langle g\rangle h_0\\0\ep\|_{L^p(\RM)}\\
&\lesssim
\begin{cases}
(1+t)^{-\frac{1}{2}(1-1/p)-\frac12}t^{-\frac{l-1}{2}}\|\d_xh_0\|_{L^1(\RM)\cap L^\infty(\RM)},&\quad l\geq 1,\\
(1+t)^{-\frac{1}{2}(1-1/p)}\|\d_xh_0\|_{L^1(\RM)\cap L^2(\RM)},&\quad l=0.\\
\end{cases}
\end{aligned}
$$

\subsection{Nonlinear connection to the Whitham equations}\label{s:noncom}
Before proving that the local means, wavenumber, and phase are indeed well-approximated by solutions of the appropriate Whitham equations, it is necessary to compute some averages involving the main part of the nonlinear term $\cN$ identified in Lemma~\ref{sharplemmag}. 
This is the purpose of the next lemma.

\bl\label{calcsg}
Assuming (H1)--(H3) and (D1)--(D3), we have the identities
$$
\begin{aligned}
\frac{1}{\ks}\bp\displaystyle\langle f^{\rm p}_{MM}\rangle-i\sum_{l=1}^n\langle\d_\xi\tilde q_l(0),\d_x f^{\rm p}_{MM}\rangle e_l\\
\ks\langle\tilde q_{n+1}(0),\d_x f^{\rm p}_{MM}\rangle\ep&=\
\bp  -\frac12\d_M^2F_{|(\Ms,\ks)}+\d_Mc_{|(\Ms,\ks)}\\\frac12\d_M^2\omega_{|(\Ms,\ks)}\ep,\\
\frac{1}{\ks}\bp\displaystyle\langle f^{\rm p}_{kM}\rangle-i\sum_{l=1}^n\langle\d_\xi\tilde q_l(0),\d_x f^{\rm p}_{kM}\rangle e_l\\
\ks\langle\tilde q_{n+1}(0),\d_x f^{\rm p}_{kM}\rangle\ep&-A_*\bp\langle g^{\rm p}_{kM}\rangle\\0\ep=\\
&\bp  -\d_{kM}^2F_{|(\Ms,\ks)}+\d_kc_{|(\Ms,\ks)}
\Id-\frac{1}{\ks}\left(\d_MF_{|(\Ms,\ks)}-\cs\Id\right)\\\ \d_{kM}^2\omega_{|(\Ms,\ks)}\ep,\\
\frac{1}{\ks}\bp\displaystyle\langle f^{\rm p}_{kk}\rangle-i\sum_{l=1}^n\langle\d_\xi\tilde q_l(0),\d_x f^{\rm p}_{kk}\rangle e_l\\
\ks\langle\tilde q_{n+1}(0),\d_x f^{\rm p}_{kk}\rangle\ep&-A_*\bp\langle g^{\rm p}_{kk}\rangle\\0\ep=\
\bp  -\frac12\d_k^2F_{|(\Ms,\ks)}-\frac{1}{\ks}\d_kF_{|(\Ms,\ks)}\\\frac12\d_k^2\omega_{|(\Ms,\ks)}\ep.\\
\end{aligned}
$$
\el

\begin{proof}
Simple means are computed directly, for example
$$
A_*\bp\langle g^{\rm p}_{kM}\rangle\\0\ep=\
\frac{1}{\ks}\bp\d_MF_{|(\Ms,\ks)}-\cs\Id\\\ks\d_Mc_{|(\Ms,\ks)}\ep.
$$
To compute the remaining terms, we first use the identity $\Id=\sum_{j=1}^{n+1}V_j\tilde V_j^T$ and come back to \eqref{rel_calc}. 
Using the algebraic identities \eqref{alg_order1}(iii), \eqref{alg_order2_MM}, \eqref{alg_order2_kM} and \eqref{alg_order2_kk},
then, we find that
$$
\begin{aligned}
\d_xf^{\rm p}_{MM}&=\ \frac12\bar U'\ \d_M^2\omega_{|(\Ms,\ks)}\ -\ \frac12L_0\d_M^2U_{|(\Ms,\ks)},\\
\d_xf^{\rm p}_{kM}&=\ -\bar U'\ \ks\d_{kM}^2c_{|(\Ms,\ks)}\ -\ L_0\d_{kM}^2U_{|(\Ms,\ks)},\\
\d_xf^{\rm p}_{kk}&=\ \frac12\bar U'\ \d_k^2\omega_{|(\Ms,\ks)}\ -\ \frac12L_0\d_M^2U_{|(\Ms,\ks)}\ -\ \frac{1}{\ks}L_0\d_kU_{|(\Ms,\ks)}.
\end{aligned}
$$
Now, observe that, for $1\leq j\leq n+1$, expanding the fact that for, $|\xi|\leq\xi_0$, $\tilde\phi_j(\xi)$ is a left eigenfunction of $L_\xi$ associated to $\lambda_j(\xi)$ yields for any periodic $g$ the identity
$$
i\langle\d_\xi\tilde\phi_j(0),L_0 g\rangle\ =\
\ks\langle\tilde\phi_j(0),L^{(1)} g\rangle+a_j\ks\langle\tilde\phi_j(0),g\rangle
\ =\ \ks\tilde V_j\cdot\bp\langle L^{(1)}g\rangle\\0\ep+\ks\tilde V_j \cdot A_*\bp\langle g\rangle\\0\ep .
$$
Since, for $1\leq j\leq n+1$,
$$
i\langle\d_\xi\tilde\phi_j(0),\bar U'\rangle\ =\ -\ks,
$$
the proof of the lemma is then achieved by simple direct computations.
\end{proof}

\medskip

We have now in hand all the pieces needed to achieve the proof of Theorem~\ref{main}.

\begin{proof}[Proof of Theorem \ref{main}]
Estimates \eqref{mainest} and \eqref{last} already follow from Proposition \ref{stepg} together with Lemma \ref{switchlem}.

Using Duhamel's principle we may write \eqref{quad_whitham} as
$$
\bp M_W\\ k_W\ep(t)=\Sigma(t)
\bp d_0+m(h_0)\\\ks\partial_x h_0 \ep + \int_0^t \Sigma (t-s)
\partial_x \Big(\frac12 \bp M_W\\ k_W\ep^T \Gamma_* \bp M_W\\ k_W\ep \Big)(s) ds\ ,
$$
where $m(h_0):=-(\bar U-\Ms)\d_xh_0$ and $\Sigma$ is the constant-coefficient solution operator defined in Lemma~\ref{lincong}.
On the other hand, using Propositions \ref{greenbdsg}, \ref{modpropg} and \ref{heatcompg}, and Lemmas \ref{sharplemmag}, \ref{lincong} and~\ref{calcsg}, we find that 
$$
\bp M(t)\\\ks\psi_x(t)\ep
=
\Sigma(t)\bp d_0+m(h_0)\\\ks\partial_x h_0\ep + \int_0^t \Sigma (t-s)
\partial_x \Big(\frac12 \bp M\\\ks\psi_x\ep^T \Gamma_* \bp M\\\ks\psi_x\ep \Big)(s) ds
+\tilde r^{\rm p}(t),
$$
where the residual $r^{\rm p}(t)$ satisfies the bound
$$
\|\tilde r^{\rm p}(t)\|_{L^p(\RM)}\lesssim E_0\ln(2+t)\ (1+t)^{-\frac{1}{2}(1-1/p)-\frac{1}{2}},\quad 2\leq p\leq\infty,
$$
where $E_0$ is defined as in Theorem \ref{oldmain}.  
Thus, subtracting and defining $\delta:=(M,\ks\psi_x)-(M_W,k_W)$, we have
$$
\delta(t)=
\int_0^t \Sigma (t-s) \partial_x \Big(\frac12 \delta^T \Gamma_* \bp M\\\ks\psi_x\ep+\frac12 \bp M_W\\k_W\ep^T \Gamma_* \delta\Big)(s)ds +\tilde r^{\rm p}(t).
$$
Letting $\eta>0$ be fixed but arbitrary and defining
$$
\nu(t):=\sup_{p\in[2,\infty]}\sup_{0\le s\le t}\|\delta(s)\|_{L^p(\RM)}(1+s)^{\frac{1}{2}(1-1/p)+\frac12- \eta},
$$
we thus obtain by the standard bounds (see \cite{LZ})
$\|\Sigma(t)\partial_x f\|_{L^p(\RM)}\le Ct^{-\frac12(1/q-1/p)-\frac12}\|f\|_{L^q(\RM)}$ when $1\le q\le p\le\infty$, that, for
any  $2\le p\le\infty$ and all $t>0$,
$$
\begin{aligned}
\|\delta(t)\|_{L^p(\RM)}&\lesssim
E_0\ln(2+t)\ (1+t)^{-\frac{1}{2}(1-1/p)-\frac{1}{2}}
+\int_0^{t/2} (t-s)^{-\frac12(1-1/p)-\frac12}\nu(t)E_0(1+s)^{-1+\eta}ds\\
&\quad + \int_{t/2}^t (t-s)^{-\frac12(1/2-1/p)-\frac12}\nu(t)E_0(1+s)^{-\frac54+\eta}ds\\
&\lesssim
E_0(\nu(t)+1)(1+t)^{-\frac12(1-1/p)-\frac12+\eta},
\end{aligned}
$$
which in turn yields
$
\nu(t)\le C_\eta E_0\left(1+\nu(t)\right).
$
Thus, if $E_0<1/(2C_\eta)$ it follows that $\nu(t)\le 2C_\eta E_0$ for all $t\geq 0$. This provides the needed bounds on $(M-M_W,\ks\psi_x-k_W)$. Using these bounds a simpler computation yields the result for $\psi_W-\psi$.
\end{proof}

\br\label{simplermk}
\textup{
Though the computation of quadratic coupling coefficients is heavy going, we note that already from Proposition \ref{heatcompg} and Lemma \ref{lincong} one may conclude that long-time behavior is governed to leading order by {\it some} ``Whitham-like'' system \eqref{qcons}, with no computation at all, since in the Duhamel formulation the principal part of nonlinear terms factors on the left as $\int_0^t \Sigma(t-s)\d_x(\dots)(s) ds$.
}
\er

\medskip

{\bf Acknowledgement.}
K.Z. thanks Bj\"orn Sandstede for a number of helpful orienting discussions
regarding modulation of periodic reaction-diffusion waves, 
Guido Schneider for bringing to our attention the 
treatment of B\'enard--Marangoni cells in 
\cite{HSZ,Zi}, 
and Denis Serre for his interest in the subject of modulation of periodic solutions and his contributions 
through \cite{Se} and private and public communications.
M.J., P.N., and M.R. thank Indiana University, and K.Z. thanks
the \'Ecole Normale Sup\'erieure, Paris,
the University of Paris 13, and the
Foundation Sciences Math\'ematiques de Paris for 
their hospitality during visits in which this work was partially carried out.
Finally, special thanks to David Lannes for a careful reading of the manuscript
and many helpful suggestions. 

\appendix

\section{Algebraic relations}\label{algebraic}

We record in this appendix some crucial relations obtained by differentiating the profile equations. 
In order to differentiate, we here consider
variable parameters $(M,k)$ rather than fixed values $(\Ms,\ks)$,
imposing dependence implicitly through the profile equations
(denoting $\langle a\rangle:=\int_0^1 a$):
\be\label{alg_profile}
k^2 U''-k(f(U))'+kc\,U'=0,\qquad \langle U\rangle=M.
\ee

We expand $L_\xi=L_0+ik\xi L^{(1)}+(ik\xi)^2 L^{(2)}$ with
\be\label{alg_L}
\begin{array}{rcl}
L_0v&=&k^2 v''-k((d f)(U)\,v)'+kc\,v'\ ;\\
L^{(1)} v&=&2kv'-(d f)(U)\,v+c\,v\ ;\\
L^{(2)} v&=&v\ .
\end{array}
\ee

Then,
by differentiation of \eqref{alg_profile}, we obtain
\be\label{alg_order1}
\begin{array}{lrcl}
L_0\, U'=0\ ,& \langle U'\rangle&=&0\ ;\\[1ex]
L_0\,\d_M U+kU'\,\d_M c=0\ ,&\langle \d_M U\rangle&=&\Id\ ;\\[1ex]
L_0\,\d_k U+kU'\,\d_k c+L^{(1)} U'=0\ ,&\langle \d_k U\rangle&=&0\ .
\end{array}
\ee
Accordingly, with $\omega=-kc$, using $L^{(1)} U'=kU''$,
we have
$$
<d^2 U>=0\ ,
$$
\be\label{alg_order2_MM}
L_0\,\d_M^2U+2k\,(\d_Mc)\,(\d_MU)'-k\,[(d^2f)(U)(\d_MU,\d_MU)]'=-U'\,k\,\d_M^2c\ ,
\ee
\be\label{alg_order2_kM}
\begin{array}{rcl}
L_0\,\d_{kM}^2U&+&k\,(\d_kc)\,(\d_MU)'+k\,(\d_Mc)\,(\d_kU)'-k\,[(d^2f)(U)(\d_kU,\d_MU)]'\\[1ex]
&+&k\,(\d_MU)''=-U'\,k\,\d_{kM}^2c\ ,
\end{array}
\ee
\be\label{alg_order2_kk}
L_0\,\d_k^2U+2k\,(\d_kc)\,(\d_kU)'-k\,[(d^2f)(U)(\d_kU,\d_kU)]'
+2k\,(\d_kU)''=-U'\,k\,\d_k^2c\ .
\ee

\section{The Whitham equations and asymptotic equivalence}\label{W_app}

In this appendix, we explain how to obtain the needed formal averaged modulation 
system for comparison to our analytical description of asymptotic behavior.
This is performed in three steps.
\begin{enumerate}
\item First, we develop a direct WKB-like formal approximation. 
At this stage we obtain a system that may contain harmless irrelevant terms.
\item Next, we use known results about large-time asymptotic behavior of systems of conservation laws about constant states to get a canonical form for the averaged modulation system.
\item 
Finally, we adapt the system taking into account the fact that the analysis of the main part of the paper 
is carried out after an implicit nonlinear change of coordinates.
\end{enumerate}

\subsection{Formal asymptotics}
Though the full nonlinear analysis may be carried out without distinction between linearly coupled and linearly uncoupled cases, the formal derivation of averaged equations involves resolutions of systems of the form $L_0 g=h$ and therefore requires knowledge of the kernel of $L_0$. We are thus compelled to provide two separate derivations.

Besides, there are at least two ways to obtain relevant averaged equations. The first one is to develop a full WKB-type expansion as in \cite{NR1,NR2}, extending the procedure in \cite{Se} to get higher order equations. This method provides the hyperbolic part of the averaged system in a quick way and a nice form. Its main drawback is that it requires a knowledge of the kernel of $L_0$ for all waves close to the wave under study essentially reducing the scope of the method to the nondegenerate case or to a fully degenerate case where $\d_Mc$ would 
vanish in a neighborhood of the studied wave.\footnote{
A situation that trivially occurs when some symmetry is present, see Remark~\ref{reflectrmk}.}
The second method is designed to study dynamics about a given wave, so that it does not suffer from the same flaws;
moreover, it is closer to our nonlinear analysis,
and yields a semilinear system.

We derive the system for the generic case with the first method and the one for the linearly uncoupled case with the second one. Note that both methods provide averaged systems with diffusion matrices containing terms that are not relevant for our present analysis.

\subsubsection{Generic case}\label{firstway}
To treat the linearly phase-coupled case, we essentially borrow the derivation of \cite{NR2} for the Korteweg-de Vries/Kuramoto-Sivashinsky equation, a model for which linear phase-coupling is a consequence of assumptions (H1)-(H2) and (D3). In the present derivation, we assume that all the waves involved in the slow-modulation description satisfy (H1)-(H2) and (D3) and are linearly phase-coupled.

Since in this derivation there is no reference wave, thus no privileged frame, we go back to the original equation
\be\label{original}
u_t\ +\ f(u)_x\ =\ u_{xx}.
\ee
We are looking for a formal expansion of a solution $u$ of equation \eqref{original} according to the two-scale \emph{ansatz}
\be\label{ansU}
u(x,t)\ =\ U\left(\frac{\Psi(\varepsilon x,\varepsilon t)}{\varepsilon};\varepsilon x,\varepsilon t\right)
\ee
where
\begin{equation}\label{ansWKB}
\displaystyle
U(y,X,T)=\sum_j\varepsilon^j U_j(y;X,T)\quad\textrm{and}\quad\Psi(X,T)=\sum_j\varepsilon^j\Psi_j(X,T)\ ,
\end{equation}
with the functions $U$ and $U_j$ being 1-periodic in the $y$-variable. 
We insert the \emph{ansatz} (\ref{ansU},\ref{ansWKB}) into \eqref{original} and collect terms of the same order in $\varepsilon$.

First this yields, with $\Omega_0=\d_T\phi_0$ and $\kappa_0=\d_X\Psi_0$,
$\Omega_0\,\d_y U_0+\kappa_0\,\d_y(f(U_0))=\kappa_0^2\,\partial_y^2 U_0$,
which is solved by
\be\label{solve_me_0}
\begin{array}{rcl}
\Omega_0(X,T)&=&-k_0(X,T)\ c(\cM_0(X,T),\kappa_0(X,T)),\\[1em]
U_0(y;X,T)&=&\quad U(y;\cM_0(X,T),\kappa_0(X,T)).
\end{array}
\ee
We have disregarded in \eqref{solve_me_0} the possibility of a phase shift dependent on $(X,T)$ since this is already encoded by $\Psi_1$. We will have to rule out similar problems of uniqueness in the following steps. At this stage the compatibility condition $\displaystyle \d_T\d_X\Psi_0=\d_X\d_T\Psi_0$ already yields the first equation of a Whitham's modulation system:
\begin{equation}\label{w1_0}
\displaystyle
\partial_T \kappa_0+\partial_X\left(\kappa_0\,c(\cM_0,\kappa_0)\right)\ =\ 0\ .
\end{equation}

In the rest of the derivation, we will use 
the notations of Proposition \ref{linspecg} and Appendix \ref{algebraic}, 
with the convention that operators act in $y$ and are associated to the wave profile $U(\,\cdot\,;\cM_0(X,T),\kappa_0(X,T))$. To fix some of the uniqueness issues of the \emph{ansatz}, we pick, for any $(M,k)$, $u^{adj}(\,\cdot\,;M,k)$ a generalized zero eigenfunction of $L_0^*$ such that $\langle u^{adj},\d_M U_{|(M,k)}\rangle=0$ and $\langle u^{adj},U'(\,\cdot\,;M,k)\rangle=1$, set $u_0^{adj}(y;X,T)=u^{adj}(\,\cdot\,;\cM_0(X;T),\kappa_0(X,T))$ and add to the \emph{ansatz} the normalizing condition
\be\label{norm_ans}
\langle u_0^{adj},U_j(\,\cdot\,;X,T)\rangle\ =\ 0,\qquad j\neq0.
\ee

The next step of the identification process gives, with $\Omega_1=\d_T\Psi_1$ and $\kappa_1=\d_X\Psi_1$,
\begin{equation}\label{me_1}
\begin{array}{rcl}
\displaystyle
(\Omega_1+c(\cM_0,\kappa_0)\kappa_1)\d_yU_0&-&
\displaystyle
\kappa_1\,L^{(1)}\d_yU_0\ -\ L_0 U_1\ -\ L^{(1)}\d_X U_0\\[1em]
&-&
\displaystyle
\d_X\kappa_0\,L^{(2)}\d_yU_0\ +\ \d_T U_0+c(\cM_0,\kappa_0)\d_X U_0\ =\ 0,
\end{array}
\end{equation}
whose solvability condition reads
\begin{equation}\label{w2_0}
\displaystyle
\d_T \cM_0+\d_X(F(\cM_0,\kappa_0))\ =\ 0,
\end{equation}
where $F$ denotes the averaged flux $F(M,k)\ =\ \langle f(U(\,\cdot\,;M,k))\rangle$.
To proceed, for arbitrary $(M,k)$ we introduce $g^{k}(\,\cdot\,;M,k)$, $g^{M}(\,\cdot\,;M,k)$ solutions of
\begin{eqnarray}\label{g_k}
L_0(g^{k}(\,\cdot\,;M,k))&=&-\ L^{(1)}\d_k U_{|(M,k)}-\d_k F_{|(M,k)}\ -\ L^{(2)}U'_{|(M,k)}\\[1ex]
&&-\ \d_k U_{|(M,k)}\ k\,\d_k c_{|(M,k)}\ -\ (\d_M U_{|(M,k)}-\Id)\ \d_k F_{|(M,k)}\nonumber\\[1em]
\label{g_M}
\quad L_0(g^{M}(\,\cdot\,;M,k))&=&-L^{(1)}\d_M U_{|(M,k)}-\d_M F_{|(M,k)}+c_{|(M,k)}\Id\\[1ex]
&&-\d_k U_{|(M,k)}\ k\,\d_M c_{|(M,k)}-(\d_M U_{|(M,k)}-\Id)[\d_M F_{|(M,k)}-c_{|(M,k)}\Id]\nonumber
\end{eqnarray}
orthogonal to $u^{adj}(\,\cdot\,;M,k)$ and set $g=\bp g^M&g^k\ep$ and $g_0(\,\cdot\,;X,T)=g(\,\cdot\,;\cM_0(X,T),\kappa_0(X,T))$. Then with \eqref{w1_0}-\eqref{w2_0} and \eqref{alg_order1} equation \eqref{me_1} reads
\begin{eqnarray}\label{me+1}
\displaystyle
L_0\ \Big(U_1
&-&d U_{|(\cM_0,\kappa_0)}\,(\,\cdot\,;\tilde \cM_1,\kappa_1)
-g_0\ \bp\d_X\cM_0\\\d_X\kappa_0\ep\Big)\\
&=&(\Omega_1\,+\ \kappa_0d c_{|(\cM_0,\kappa_0)}\,(\tilde \cM_1,\kappa_1)+c_{|(\cM_0,\kappa_0)}\,\kappa_1)\,\d_yU_0 \nonumber
\end{eqnarray}
for any choice of $\tilde \cM_1$. Let us set $\cM_1=\langle U_1\rangle$. Choosing $\tilde \cM_1$ to get
\begin{equation}\label{Om1}
\displaystyle
\Omega_1+\kappa_0\ d c(\cM_0,\kappa_0)\,[\tilde \cM_1,\kappa_1]+c(\cM_0,\kappa_0)\,\kappa_1=\ 0
\end{equation}
and normalizing the parametrization, as in Lemma \ref{keylemg}, to get, for any $(M,k)$,
\be\label{recall_norm_param}
\langle u^{adj}(\,\cdot\,;M,k),\d_k U_{|(M,k)}\rangle=0,
\ee
equation \eqref{me+1} is reduced to
\begin{eqnarray}\nonumber
\displaystyle
U_1&=&
\displaystyle
d U_{|(\cM_0,\kappa_0)}\,(\tilde \cM_1,\kappa_1)\ +\ g_0\ \bp\d_X\cM_0\\\d_X\kappa_0\ep\\
\nonumber
\displaystyle
\cM_1&=&
\displaystyle
\tilde \cM_1\ +\ \langle g_0\rangle\ \bp\d_X\cM_0\\\d_X\kappa_0\ep.
\end{eqnarray}
Then, compatibility condition $\d_T\kappa_1=\d_X\Omega_1$ yields
\begin{equation}\label{w1_1}
\begin{aligned}
\d_T\kappa_1+\ \d_X(\kappa_0d c_{|(\cM_0,\kappa_0)}\,[\cM_1,\kappa_1]+
&c_{|(\cM_0,\kappa_0)}\,\kappa_1)\ =\\
&\quad  \d_X\left(\kappa_0\d_Mc_{|(\cM_0,\kappa_0)}\ \langle g_0\rangle\ \bp\d_X\cM_0\\\d_X\kappa_0\ep\right).
\end{aligned}
\end{equation}

Returning to the identification process, we obtain an equation of the form
$$
\d_T U_1\ +\ \d_X(df(U_0)\,U_1)\ -\ \d_X^2U_0\ -\ L_0U_2\ +\ \d_y(\ \cdots\ )\ =\ 0,
$$
whose solvability condition is
\begin{equation}\label{w2_1}
\begin{array}{rcl}
\displaystyle
\d_T\cM_1\ +\ \d_X(d F_{|(\cM_0,\kappa_0)}\,[\cM_1,\kappa_1])&=&
\displaystyle
\d_X^2\cM_0\ -\  \d_X\left(\langle df(U_0)\,g_0\rangle\ \bp\d_X\cM_0\\\d_X\kappa_0\ep\right)\\[1ex]
&&\displaystyle
+\ \d_X\left(\d_MF_{|(\cM_0,\kappa_0)}\,\langle g_0\rangle\ \bp\d_X\cM_0\\\d_X\kappa_0\ep\right).
\end{array}
\end{equation}
To write the second order system in a compact form, let us introduce, for arbitrary $(M,k)$,
$$
\begin{array}{rcl}
\displaystyle
d_{1,1}(M,k)&=&
\displaystyle
\Id-\langle df(U(M,k))\,g^M(M,k)\rangle+\d_MF(M,k)\,\langle g^M(M,k)\rangle\\[1ex]
\displaystyle
d_{1,2}(M,k)&=&
\displaystyle
-\langle df(U(M,k))\,g^k(M,k)\rangle+\d_MF(M,k)\,\langle g^k(M,k)\rangle\\[1ex]
\displaystyle
d_{2,1}(M,k)&=&
\displaystyle
k\d_Mc\,(M,k)\ \langle g^M(M,k)\rangle\\[1ex]
\displaystyle
d_{2,2}(M,k)&=&
\displaystyle
k\d_Mc\,(M,k)\ \langle g^k(M,k)\rangle.\\[1ex]
\end{array}.
$$
With these notations, systems (\ref{w1_0},\ref{w2_0}), (\ref{w1_1},\ref{w2_1}) coincide with the first systems obtained in the formal expansion of a solution $(\cM,\kappa)$ of
\be\label{w12_1}
\left\{
\begin{array}{rclcl}
\displaystyle
\d_t \cM&+&
\displaystyle
\d_x(F(\cM,\kappa))&=&
\displaystyle
\d_x\left(d_{1,1}(\cM,\kappa)\,\d_x\cM+d_{1,2}(\cM,\kappa)\,\d_x\kappa\right)\\[1ex]
\displaystyle
\d_t \kappa&+&
\displaystyle
\d_x(\kappa\,c(\cM,\kappa))&=&
\displaystyle
\d_x\left(d_{2,1}(\cM,\kappa)\,\d_x\cM+d_{2,2}(\cM,\kappa)\,\d_x\kappa\right)
\end{array}\right.
\ee
according to the slow \emph{ansatz}
\be\label{ansLF}
(\cM,\kappa)(x,t)\ =\ \sum_j\varepsilon^j (\cM_j,\kappa_j)(\eps x,\eps t).
\ee

We call system \eqref{w12_1} a \emph{(second-order) Whitham's modulation system}.

\subsubsection{Phase-decoupled case}\label{secondway}
For the phase-decoupled case, we propose an alternative derivation that would also work for the uncoupled case. We pick a wave of parameters $(\Ms,\ks)$ and assume that it satisfies (H1)-(H2) and (D3) and is linearly phase-decoupled.

We again insert the \emph{ansatz} (\ref{ansU},\ref{ansWKB}) into \eqref{original} and collect terms of the same order in $\varepsilon$ but this time we specialize to $(\cM_0,\kappa_0)=(\Ms,\ks)$. We keep \eqref{norm_ans} as \emph{ansatz} normalization and \eqref{recall_norm_param} as parametrization normalization. The first nontrivial equation is with $(\Omega_1,\kappa_1)=(\d_T\Psi_1,\d_X\Psi_1)$
\begin{equation}\label{linme_1}
(\Omega_1+c(\Ms,\ks)\kappa_1)\bar U'-
\displaystyle
\kappa_1\,L^{(1)}\bar U'\ -\ L_0 U_1\ =\ 0
\end{equation}
which may also be written as
$$
\displaystyle
L_0\ \Big(U_1
-d U_{|(\Ms,\ks)}\,(\,\cdot\,;\cM_1,\kappa_1)\Big)
=(\Omega_1\,+\ \ks \d_k c(\Ms,\ks)\,\kappa_1+c(\Ms,\ks)\,\kappa_1)\,\bar U'
$$
for any $\cM_1$. Solvability yields
$$
\displaystyle
\Omega_1\,+\ \ks \d_k c(\Ms,\ks)\,\kappa_1+c(\Ms,\ks)\,\kappa_1\ =\ 0
$$
and with our normalization choices \eqref{linme_1} reduces to
$$
\displaystyle
U_1\ =\ d U_{|(\Ms,\ks)}\,(\cM_1,\kappa_1),\quad \cM_1\ =\ \langle U_1\rangle.
$$
Compatibility condition $\d_T\kappa_1=\d_X\Omega_1$ already gives
\begin{equation}\label{linw1_1}
\displaystyle
\d_T\kappa_1+\ \d_X(\ks \d_k c_{|(\Ms,\ks)}\,\kappa_1+c(\Ms,\ks)\,\kappa_1)\ =\ 0.
\end{equation}

At the next step of the identification, we get with $(\Omega_2,\kappa_2)=(\d_T\Psi_2,\d_X\Psi_2)$
$$
\begin{array}{rcl}
\displaystyle
(\Omega_2+c(\Ms,\ks)\kappa_2)\ \bar U'&-&
\displaystyle
\kappa_2\,L^{(1)}\bar U'\ -\ L_0 U_2\ -\ L^{(1)}\d_X U_1\\[1em]
&-&
\displaystyle
(\kappa_1)^2\,L^{(2)}\bar U''\ -\ \kappa_1\,L^{(1)}U_1'\ -\ \d_X\kappa_1\,L^{(2)}\bar U'\\[1em]
&+&
\displaystyle
(\Omega_1+c(\Ms,\ks)\kappa_1)\,U_1'\ +\ \d_T U_1+c(\Ms,\ks)\d_X U_1\\[1em]
&+&
\displaystyle
\ks\,\d_y\left(\frac12d^2f(\bar U)(U_1,U_1)\right)\ +\ \kappa_1\,\d_y\left(df(\bar U)\right)\,U_1\ =\ 0,
\end{array}
$$
which may also be written
\begin{eqnarray}\label{linme+2}
\displaystyle
L_0\ \Big(U_2
&-&
\displaystyle
d U_{|(\Ms,\ks)}\,(\,\cdot\,;\cM_2,\kappa_2)
-\frac12d^2 U_{|(\Ms,\ks)}\,(\,\cdot\,;(\cM_1,\kappa_1),(\cM_1,\kappa_1))\Big)\\
&=&
\displaystyle
\d_T U_1+c(\Ms,\ks)\d_X U_1-\ \d_X\kappa_1\,L^{(2)}\bar U'-\ L^{(1)}\d_X U_1\nonumber\\
&+&
\displaystyle
\left(\Omega_2\,-\ \d_k \omega(\Ms,\ks)\,\kappa_2\,-\frac12d^2\omega(\Ms,\ks)\,((\cM_1,\kappa_1),(\cM_1,\kappa_1))\right)\,\bar U'\nonumber
\end{eqnarray}
for any $\cM_1$. Solvability then reads
\begin{eqnarray}\label{linw2_1}
\displaystyle
\qquad\d_T\cM_1&+&\d_X(d F_{|(\Ms,\ks)}\,(\cM_1,\kappa_1))\ =\ 0,\\
\label{linOm2}
\displaystyle
\Omega_2&-&\d_k \omega(\Ms,\ks)\,\kappa_2\ -\ \frac12d^2\omega(\Ms,\ks)\,((\cM_1,\kappa_1),(\cM_1,\kappa_1))\\
&=&\d_X\kappa_1+\langle u^{adj}(\Ms,\ks), L^{(1)}\d_kU_{|(\Ms,\ks)}\rangle\d_X\kappa_1+
\langle u^{adj}(\Ms,\ks), L^{(1)}\d_MU_{|(\Ms,\ks)}\rangle\d_X\cM_1.\nonumber
\end{eqnarray}
Note that the latter equation yields
\noindent
\begin{eqnarray}\label{linw1_2}
&&\d_T\kappa_2-\d_X\left(\d_k \omega(\Ms,\ks)\,\kappa_2\ +\ \frac12d^2\omega(\Ms,\ks)\,((\cM_1,\kappa_1),(\cM_1,\kappa_1))\right)\\
&=&\d^2_X\kappa_1+\d_X\left(\langle u^{adj}(\Ms,\ks), L^{(1)}\d_kU_{|(\Ms,\ks)}\rangle\d_X\kappa_1+
\langle u^{adj}(\Ms,\ks), L^{(1)}\d_MU_{|(\Ms,\ks)}\rangle\d_X\cM_1\right).\nonumber
\end{eqnarray}

To proceed, we introduce $\tilde g^{k}$, $\tilde g^{M}$, the solutions of
\begin{eqnarray}\nonumber
L_0\ \tilde g^{k}&=&-\ L^{(1)}\d_k U_{|(\Ms,\ks)}-\d_M U_{|(\Ms,\ks)}\d_k F_{|(\Ms,\ks)}\\[1ex]
&&+\ \bar U'\langle u^{adj}(\Ms,\ks), L^{(1)}\d_kU_{|(\Ms,\ks)}\rangle
-\d_k U_{|(\Ms,\ks)}\ \ks\,\d_k c_{|(\Ms,\ks)},\nonumber\\[1em]
\nonumber
\quad L_0\ \tilde g^{M}&=&-L^{(1)}\d_M U_{|(\Ms,\ks)}-\d_M U_{|(\Ms,\ks)}(\d_M F_{|(\Ms,\ks)}-c_{|(\Ms,\ks)}\Id)\\[1ex]
&&+\ \bar U'\langle u^{adj}(\Ms,\ks), L^{(1)}\d_MU_{|(\Ms,\ks)}\rangle,\nonumber
\end{eqnarray}
mean free and orthogonal to $u^{adj}(\,\cdot\,;\Ms,\ks)$ and set $\tilde g=\bp \tilde g^M&\tilde g^k\ep$.
With \eqref{linw2_1} and \eqref{linOm2}, setting $\cM_2=\langle U_2\rangle$, equation \eqref{linme+2} becomes
$$
\displaystyle
U_2\ =\
d U_{|(\Ms,\ks)}\,(\cM_2,\kappa_2)\ +\ \frac12d^2 U_{|(\Ms,\ks)}\,(\,\cdot\,;(\cM_1,\kappa_1),(\cM_1,\kappa_1))
\ +\ \tilde g\ \bp\d_X\cM_1\\\d_X\kappa_1\ep.
$$

Finally, substituting (\ref{ansU},\ref{ansWKB}) into \eqref{original}, 
and comparing terms of order $\eps^3$, we obtain an equation of the form
$$
\d_T U_2\ +\ \d_X\left(df(\bar U)\,U_2+\frac12d^2f(\bar U)(U_1,U_1)\right)\ -\ \d_X^2U_1\ -\ L_0U_3\ +\ \d_y(\ \cdots\ )\ =\ 0,
$$
whose solvability implies
\begin{eqnarray}\label{linw2_2}
\displaystyle
\d_T\cM_2&+&\d_X\left(d F_{|(\Ms,\ks)}\,(\cM_2,\kappa_2)+\frac12d^2 F_{|(\Ms,\ks)}\,[(\cM_1,\kappa_1),(\cM_1,\kappa_1)]\right)\\
\displaystyle
&=&\d_X^2\cM_1\ -\  \d_X\left(\langle df(\bar U)\,\tilde g\rangle\ \bp\d_X\cM_1\\\d_X\kappa_1\ep\right)\nonumber.
\end{eqnarray}

To write the second order system in a compact form, let us introduce, for arbitrary $(M,k)$,
$$
\begin{array}{rclrcl}
\displaystyle
\tilde d_{1,1}&=&
\displaystyle
\Id-\langle df(U(M,k))\,\tilde g^M\rangle,&
\displaystyle
\tilde d_{1,2}&=&
\displaystyle
-\langle df(U(M,k))\,g^k\rangle\\[1ex]
\displaystyle
\tilde d_{2,1}&=&
\displaystyle
\langle u^{adj}(\Ms,\ks), L^{(1)}\d_MU_{|(\Ms,\ks)}\rangle,&
\displaystyle
\tilde d_{2,2}&=&
\displaystyle
1+\langle u^{adj}(\Ms,\ks), L^{(1)}\d_kU_{|(\Ms,\ks)}\rangle\\[1ex]
\end{array}.
$$
With these notations, systems (\ref{linw1_1},\ref{linw2_1}), (\ref{linw1_2},\ref{linw2_2}) coincide with the first nontrivial systems obtained in the formal expansion of a solution $(\cM,\kappa)$ of
\be\label{linw12_1}
\left\{
\begin{array}{rclcl}
\displaystyle
\d_t \cM&+&
\displaystyle
\d_x(F(\cM,\kappa))&=&
\displaystyle
\d_x\left(\tilde d_{1,1}\,\d_x\cM+\tilde d_{1,2}\,\d_x\kappa\right)\\[1ex]
\displaystyle
\d_t \kappa&+&
\displaystyle
\d_x(\kappa\,c(\cM,\kappa))&=&
\displaystyle
\d_x\left(\tilde d_{2,1}\,\d_x\cM+\tilde d_{2,2}\,\d_x\kappa\right)
\end{array}\right.
\ee
according to the slow \emph{ansatz}
$$
(\cM,\kappa)(x,t)\ =\ \sum_j\varepsilon^j (\cM_j,\kappa_j)(\eps x,\eps t),\qquad (\cM_0,\kappa_0)\ =\ (\Ms,\ks).
$$

We call system \eqref{linw12_1}, likewise,
a \emph{(second-order) Whitham's modulation system}.

\medskip

As should be clear from the formal derivations, there is some freedom in the choice of the diffusion matrices. This reflects the fact that many systems of conservation laws share the same asymptotic behavior about constant states. We recall next how to classify these systems according to their asymptotic behavior;
this will provide a canonical modulation system for our nonlinear analysis.

\subsection{Asymptotic equivalence of systems of conservation laws}

\subsubsection{General theory}

We now recall the notion of asymptotic equivalence and behavior of solutions of systems of conservation laws near a constant state,
useful in our context since, being able to prove modulational behavior, we 
reduce the dynamics about a periodic wave to motion of parameters 
near a constant state. Given a general system of conservation laws
\be\label{gcons}
w_t +(g(w))_x= ( B(w)w_x)_x
\ee
and a reference state $w_*$ at which $dg(w_*)$ has distinct eigenvalues, so
that $L_* dg(w_*)R_*$ is diagonal for some $L_*=\bp l_1^*\\ \vdots\\l_n^*\ep$,
$R_*=\bp r_1^* & \dots  & r_n^*\ep$, $L_*R_*=\Id$,
define the {\it quadratic approximant}
\be\label{qcons}
y_t +A_* y_x +\frac{1}{2}(y^t\Gamma_* y)_x = \tilde B_* y_{xx},
\ee
and the {\it decoupled quadratic approximant}
\be\label{dqcons}
z_t +A_* z_x +\frac{1}{2}(z^t\tilde \Gamma_* z)_x = \tilde B_* z_{xx},
\ee
about $w_*$, where
\be\label{q}
A_*=dg(w_*),\qquad
\Gamma_*:=  d^2g(w_*),
\qquad\textrm{and}\qquad
B_*:=B(w_*),
\ee
\be\label{dq}
\tilde \Gamma_*:= L_*^t\diag\{ R_*^t\Gamma_*R_*\}L_*,
\qquad\textrm{and}\qquad
\tilde B_*:= R_*\diag \{ L_* B_*R_*\}L_*.\footnote{
Here and elsewhere, we identify as usual bilinear maps with vector-valued matrices, and in particular $d^2g(w_*)$ with $Hess(g)(w_*)$. Moreover, for these vector-valued matrices, we use coordinate notations so that for instance $\Gamma_*^j\in \RM^{n\times n}$ satisfies
$w^t\Gamma_*^jw=(w^t\Gamma_*w)_j=d^2g_j(w,w)$.}
\ee

Assume 
%
the parabolicity condition, 
$\diag \{ L_* B(w_*)R_*\}$ is positive, and
define the self-similar nonlinear (resp. linear if $\gamma_j=0$)
diffusion waves
$ \theta_j(x,t)=t^{-1/2}\bar \theta_j(x/\sqrt{t})$
to be the solutions of the Burgers equations (resp. heat equations if
$\gamma_j=0$)
\be\label{burgers}
\theta_t+ \frac{1}{2}(\gamma_j^* \theta^2)_x=\theta_{xx},
\quad
\gamma_j^*:=[l_j^* (r_j^*)^t\Gamma_* r_j^*]/[l_j^*B_*r_j^*],
\ee
with
delta-function initial data
$l_j^* m_0\ \delta(\cdot)$,
where
$m_0:=\int z_0(x)dx$.
Then, we have the following fundamental result describing behavior
of \eqref{gcons}--\eqref{dqcons} with respect to localized initial perturbations.

\begin{proposition}[ \cite{Ka,LZ} ]\label{cl_lemma}
Let $\eta>0$. Let $w$ and $z$ be solutions of \eqref{gcons} and
\eqref{dqcons} with initial data $w_0$ and $z_0=w_0-w_*$ such that 
$E_1:=\|z_0\|_{L^1(\RM)\cap H^4(\RM)}+\||\cdot|z_0\|_{L^1(\RM)}$ is sufficiently small.
Then, for $1\leq p\leq\infty$, $m_0:=\int_\RM z_0$, and $\theta_j$ as in \eqref{burgers}, 
\be\label{aeq1}
\|w(t)-w_*-z(t)\|_{L^p(\RM)}\lesssim E_1(1+t)^{-\frac{1}{2}(1-1/p)-\frac{1}{4}+\eta}
 ;
\ee
and
\be\label{aeq2}
\|z(t)-\sum_j \theta_j(\cdot -a_j^*(1+t),b_j^* (1+t))\ r_j^*\|_{L^p(\RM)}\lesssim E_1(1+t)^{-\frac{1}{2}(1-1/p)-\frac{1}{4}+\eta} ,
\ee
with $a_j^*:=l_j^*A_*r_j^*$, $b_j^*:=l_j^*B_*r_j^*$, whence\footnote{Computing
$\|\theta_j(t)\|_{L^p(\RM)}=
t^{-1/2}\|\bar \theta_j (\cdot/\sqrt{t})\|_{L^p(\RM)}\sim
t^{-\frac{1}{2}(1-1/p)}$.}, if $\eta<1/4$,
$$
\|w(t)-w_*\|_{L^p(\RM)},\, \|z(t)\|_{L^p(\RM)}
\gtrsim |m_0|\,(1+t)^{-\frac{1}{2}(1-1/p)}\ .
$$
\end{proposition}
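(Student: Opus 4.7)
The plan is to carry out the comparison in three stages, interpolating between the fully nonlinear system \eqref{gcons}, the quadratic approximant \eqref{qcons}, and finally the decoupled quadratic approximant \eqref{dqcons}, before identifying the latter with a superposition of scalar diffusion waves. Throughout, I would use standard Gaussian bounds on the constant-coefficient parabolic semigroup associated with $-A_*\partial_x+\tilde B_*\partial_x^2$, namely $\|e^{tL_*}\partial_x f\|_{L^p(\RM)}\lesssim t^{-\frac12(1/q-1/p)-1/2}\|f\|_{L^q(\RM)}$ for $1\le q\le p\le\infty$, together with the bounds (see \cite{LZ}) on the integrated heat kernels obtained from the diagonal structure of $\tilde B_*$ in the eigenbasis of $A_*$.

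For the first stage, set $y:=w-w_*$, let $y_Q$ be the solution of \eqref{qcons} with the same initial datum $z_0$, and write the equation for $y-y_Q$. The difference solves $(y-y_Q)_t+A_*(y-y_Q)_x-\tilde B_*(y-y_Q)_{xx}=\partial_x\mathcal{E}_{\mathrm{I}}$ where $\mathcal{E}_{\mathrm{I}}$ collects (i) the cubic and higher-order terms in Taylor expansions of $g$ and $B$ about $w_*$, (ii) the difference $(B(w)-B_*)w_x$, and (iii) the off-diagonal part $B_*-\tilde B_*$ acting on $y_x$. Pieces (i) and (ii) are formally cubic, so, given the $(1+t)^{-\frac12(1-1/p)}$ linear decay established by Kawashima-type estimates and a bootstrap on $E_1$, they gain an extra factor $(1+t)^{-1/2}$ after Duhamel integration, comfortably landing within the error class $(1+t)^{-\frac12(1-1/p)-\frac14+\eta}$. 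Piece (iii), the only genuinely quadratic contribution, is handled together with the second stage.

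For the second stage, set $\delta:=y_Q-z$ and subtract \eqref{qcons} and \eqref{dqcons}; this gives $\delta_t+A_*\delta_x-\tilde B_*\delta_{xx}=\partial_x\mathcal{E}_{\mathrm{II}}$ with $\mathcal{E}_{\mathrm{II}}=\tfrac12 y_Q^t(\Gamma_*-\tilde\Gamma_*)y_Q+\tfrac12(y_Q+z)^t\tilde\Gamma_*(y_Q-z)$. The key point is that, in characteristic coordinates $y_j:=l_j^*y_Q$, $z_j:=l_j^*z$, the ``off-diagonal'' quadratic terms $y_j y_k$ (resp.\ $z_jz_k$) with $j\ne k$ are the only ones present in $y_Q^t(\Gamma_*-\tilde\Gamma_*)y_Q$, by the very definition \eqref{dq} of $\tilde\Gamma_*$. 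Combined with the third-stage output that $z_j$, and consequently $y_j$ up to a self-improving error, behaves to leading order as the self-similar profile $\theta_j(x-a_j^*(1+t),b_j^*(1+t))$, one invokes the Liu--Zeng cancellation principle: because $a_j^*\ne a_k^*$ under strict hyperbolicity, the product $\theta_j(x-a_j^*t)\theta_k(x-a_k^*t)$ is an $L^1$-in-$x$ quantity of size $(1+t)^{-\frac12-\frac14+\eta}$ rather than the naive $(1+t)^{-1/2}$ obtained from H\"older alone. This extra $(1+t)^{-1/4+\eta}$ factor, propagated through Duhamel, yields the claimed rate for $\|\delta\|_{L^p(\RM)}$, and likewise for piece (iii) of the first stage.

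For the third stage, apply $L_*$ to \eqref{dqcons}; since $L_*A_*R_*$ and $L_*\tilde B_*R_*$ are simultaneously diagonal by construction, the system decouples into $n$ scalar convected viscous Burgers equations $\partial_t z_j+a_j^*\partial_x z_j+\tfrac12\gamma_j^*\partial_x(z_j^2)=b_j^*\partial_x^2 z_j$ (modulo off-diagonal quadratic terms that feed back into stage two). Each scalar equation with initial mass $l_j^*m_0$ is, by the classical Hopf--Cole/self-similarity theory for scalar Burgers with localized data, asymptotic at rate $(1+t)^{-\frac12(1-1/p)-\frac14+\eta}$ to its unique self-similar profile $\theta_j(\cdot-a_j^*(1+t),b_j^*(1+t))$; summing against $r_j^*$ yields \eqref{aeq2}, and the triangle inequality combined with stages one and two yields \eqref{aeq1}. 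The final lower bound is then immediate from $\|\theta_j(t)\|_{L^p(\RM)}\sim |l_j^*m_0|(1+t)^{-\frac12(1-1/p)}$ and the strictly faster decay of the residual when $\eta<1/4$. The main technical obstacle is stage two: the Liu--Zeng cancellation is what forces the $+\eta$ loss, and making it precise requires stacking pointwise/weighted Gaussian bounds against convolution with the heat kernels of $\tilde B_*$, coupled to a delicate bootstrap that treats stages two and three as a coupled fixed-point problem rather than sequential reductions.
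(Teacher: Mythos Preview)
The paper does not prove Proposition~\ref{cl_lemma}; it is quoted from \cite{Ka,LZ}. What the paper does prove in Appendix~\ref{cl_proof} is the neighboring Proposition~\ref{scalar_lemma}, which is precisely your Stage~1. Your three-stage outline is the standard Liu--Zeng architecture and is correct in its broad strokes, and in particular you are right that the bottleneck lies in Stage~2 and that the $+\eta$ loss originates from the Liu interaction estimate for products of diffusion waves traveling at distinct speeds.

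Two points deserve correction. First, your piece~(iii) in Stage~1, the term $(B_*-\tilde B_*)y_{xx}$, is \emph{linear}, not quadratic; it cannot be handled by the nonlinear Liu cancellation you invoke for Stage~2. The paper's proof of Proposition~\ref{scalar_lemma} treats it instead via the semigroup comparison $(\tilde\sigma-\sigma)(t)$, exploiting that $B_*-\tilde B_*$ is off-diagonal in characteristic coordinates so that the corresponding heat-kernel blocks have centers separating linearly in time; this yields the sharper rate $(1+t)^{-\frac12(1-1/p)-\frac12+\eta}$, so Stage~1 is in fact not a bottleneck at all. Second, in Stage~3 you mention ``off-diagonal quadratic terms that feed back into stage two,'' but by the definition~\eqref{dq} of $\tilde\Gamma_*$ the system~\eqref{dqcons} is already fully decoupled in characteristic coordinates into $n$ scalar convected Burgers equations with no residual coupling; Stage~3 is a purely scalar problem, and the first-moment hypothesis $\||\cdot|z_0\|_{L^1}<\infty$ enters here to control the scalar convergence to the self-similar profile $\theta_j$ at the stated rate. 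With these adjustments your sketch matches the argument of \cite{LZ}.
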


Proposition \ref{cl_lemma} asserts that \eqref{gcons} and \eqref{dqcons}
(hence also \eqref{qcons})
are {\it asymptotically equivalent} with respect to small localized initial
data $w_0-w_*=z_0\in L^1(\RM,(1+|x|)dx)\cap H^3(\RM)$, in the sense that the difference
between solutions $z(t)$ and $w(t)-w_*$ decays at rate
$(1+t)^{-\frac{1}{2}(1-1/p)-\frac{1}{4}+\eta}$ approximately
$(1+t)^{-\frac{1}{4}}$ faster
than the (Gaussian) rate $|m_0|\,(1+t)^{-\frac{1}{2}(1-1/p)}$
at which either one typically (i.e., for data with small
$L^1$ first moment) decays.
Moreover, through \eqref{aeq2}, it gives a simple description of
asymptotic behavior as the {\it linear superposition of scalar diffusion
waves} $\theta_j$ moving with characteristic speeds (eigenvalues $a_j^*$)
in the characteristic modes (eigendirections $r_j^*$) of
$dg(w_*)$, satisfying Burgers equations \eqref{burgers}.

We have also the following more elementary result comparing
to the full quadratic approximant.

\begin{proposition}[\cite{JNRZ2}\footnote{
Though stated in \cite[Lemma 1.2]{JNRZ2} for scalar equations,
the proof applies equally to the system case;
see Appendix~\ref{cl_proof}.}]\label{scalar_lemma}
Let $\eta>0$. Let $w$ and $y$ be solutions of \eqref{gcons} and
\eqref{qcons} with initial data $w_0$ and $y_0=w_0-w_*$ such that $E_0:=\|y_0\|_{L^1(\RM)\cap H^4(\RM)}$ is sufficiently small.
Then,\\ for $1\leq p\leq\infty$,
$$
\|(w-w_*-y)(t)\|_{L^p(\RM)}\lesssim
E_0 (1+t)^{-\frac{1}{2}(1-1/p)-\frac{1}{2}+\eta}.
$$
\end{proposition}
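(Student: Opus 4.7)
My plan is a direct Duhamel comparison of the two integral equations satisfied by $u:=w-w_*$ and $y$. First, by standard parabolic theory—linear Gaussian heat-kernel bounds combined with a bootstrap on the quadratic nonlinearity—I would establish the a priori decay estimates $\|u(t)\|_{L^p(\R)}, \|y(t)\|_{L^p(\R)} \lesssim E_0(1+t)^{-\frac{1}{2}(1-1/p)}$ for $1 \leq p \leq \infty$, together with corresponding estimates on derivatives, which give in particular that $\|y(s)\|_{L^2}\|y_x(s)\|_{L^2}$ and similar quadratic quantities are $O(E_0^2(1+s)^{-1})$.

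Writing $u_t + A_*u_x - B_*u_{xx} = \d_x[-Q(u) + (B(w_*+u) - B_*)u_x]$ with $Q(u) := g(w_*+u) - g(w_*) - A_*u = \tfrac{1}{2}u^t\Gamma_*u + O(|u|^3)$, I would represent $u$ by Duhamel's formula using $\Sigma^{\mathrm{full}}(t)$, the semigroup of $\d_t + A_*\d_x - B_*\d_x^2$, and $y$ by Duhamel using $\Sigma(t)$, the semigroup of the linearization of \eqref{qcons}, namely $\d_t + A_*\d_x - \tilde B_*\d_x^2$. Subtracting and using $u_0 = y_0$ produces the decomposition
\[
\delta(t) = [\Sigma^{\mathrm{full}}(t) - \Sigma(t)]y_0 + \int_0^t \Sigma^{\mathrm{full}}(t-s)\d_x\mathcal{E}(s)\,ds + \int_0^t [\Sigma(t-s) - \Sigma^{\mathrm{full}}(t-s)]\d_x\!\left(\tfrac{1}{2}y^t\Gamma_*y\right)\!(s)\,ds,
\]
where $\delta := u-y$ and $\mathcal{E} := -Q(u) + \tfrac{1}{2}y^t\Gamma_*y + (B(w_*+u) - B_*)u_x$. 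Writing $u = y + \delta$ and expanding, $\mathcal{E}$ reduces to a combination of terms that are at least quadratic in the small quantities $y$, $\delta$, $u$ (namely $-y^t\Gamma_*\delta - \tfrac{1}{2}\delta^t\Gamma_*\delta + O(|u|^3) + O(|u||u_x|)$), so that $\|\mathcal{E}(s)\|_{L^1} \lesssim (\nu(t)E_0 + E_0^2)(1+s)^{-1}$ with $\nu(t) := \sup_{0\leq s\leq t,\,1\leq p\leq\infty}(1+s)^{\frac{1}{2}(1-1/p)+\frac{1}{2}-\eta}\|\delta(s)\|_{L^p(\R)}$.

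The critical linear input is the Kawashima--Liu--Zumbrun semigroup-difference estimate
\[
\|[\Sigma^{\mathrm{full}}(t) - \Sigma(t)]f\|_{L^p(\R)} \lesssim (1+t)^{-\frac{1}{2}(1-1/p) - \frac{1}{2}}\|f\|_{L^1(\R)}
\]
(see \cite{Ka,LZ}), which encodes the fact that the off-diagonal part of $B_*$ in the characteristic basis of $A_*$---which is precisely $B_* - \tilde B_*$---couples modes propagating at the \emph{distinct} speeds $a_j^*$, so that the corresponding Green's-function corrections are spatially separated from the principal Gaussians and decay by an additional factor $(1+t)^{-1/2}$. This is exactly where the assumption that $dg(w_*)$ has distinct eigenvalues enters. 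Feeding this into the first and third terms of the decomposition and using standard $\Sigma^{\mathrm{full}}\d_x$ bounds (decay $(t-s)^{-\frac{1}{2}(1-1/p) - \frac{1}{2}}\|\cdot\|_{L^1}$) on the middle term, with the quadratic bounds above on $\mathcal{E}$ and on $\d_x(y^t\Gamma_*y)$, the routine time-splitting estimate for integrals $\int_0^t(t-s)^{-a}(1+s)^{-b}ds$ yields
\[
\|\delta(t)\|_{L^p(\R)} \lesssim (E_0 + \nu(t)E_0 + E_0^2)(1+t)^{-\frac{1}{2}(1-1/p)-\frac{1}{2}+\eta},
\]
closing a bootstrap on $\nu$ to give $\nu(t) \lesssim E_0$ for $E_0$ sufficiently small. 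The $p=1$ case is handled identically using the $L^1 \to L^1$ heat-kernel bound $\|\Sigma^{\mathrm{full}}(t)\d_x f\|_{L^1} \lesssim t^{-1/2}\|f\|_{L^1}$.

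The principal obstacle is the semigroup-difference estimate itself: in the scalar setting $B_* = \tilde B_*$ and the decomposition collapses to a routine Duhamel bootstrap (this is the scalar argument of \cite{JNRZ2}), whereas in the system case the argument genuinely requires Kawashima-type pointwise Green's function analysis exploiting strict hyperbolicity through the distinct eigenvalues of $A_*=dg(w_*)$. All other ingredients---preliminary decay of $u$ and $y$, quadratic structure of $\mathcal{E}$, and standard heat-kernel Duhamel bounds---are routine.
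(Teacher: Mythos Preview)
Your proposal is correct and follows essentially the same approach as the paper's proof in Appendix~\ref{cl_proof}: both write $u=w-w_*$ and $y$ via Duhamel with their respective semigroups, subtract, and close a bootstrap on $\delta$ using the Kawashima--Liu--Zumbrun semigroup-difference estimate as the key input distinguishing the system case from the scalar one. The only differences are organizational---the paper applies $(\tilde\sigma-\sigma)$ to the $u$-quadratic term rather than the $y$-quadratic, and splits the $\delta$-linear and cubic/derivative remainders between the two semigroups rather than lumping them into a single $\Sigma^{\rm full}\partial_x\mathcal{E}$ term---and the paper states the semigroup-difference bound in the slightly more precise form $\|(\tilde\sigma-\sigma)(t)\partial_x^r h\|_{L^p}\lesssim t^{-\frac12(1/q-1/p)-r/2}(1+t)^{-1/2}\|h\|_{L^q}+e^{-\theta t}\|\partial_x^r h\|_{L^p}$, which carries the short-time exponential correction your stated version omits (harmless here since all inputs lie in $L^1\cap L^p$).
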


An important consequence of Proposition \ref{scalar_lemma} is that only the
quadratic order quantities appearing in \eqref{qcons}
need be taken into account in the study of asymptotic behavior
of \eqref{gcons} to the order of approximation
considered in Theorem \ref{main}.
Finally, we note the following result following from a proof similar
to but much simpler than the one for Proposition \ref{scalar_lemma}
given in \cite[Appendix A]{JNRZ2}.

\bl\label{special_lemma}
Let $k$ satisfy $k(0)=0$ and
\be\label{specialeq}
k_t + ak_x + (\gamma k^2)_x- dk_{xx}= (Fk)_x,
\ee
where $a,\gamma, d$ are constant, $d>0$ and $F$ is a given function such that $\|F(t)\|_{L^2(\RM)}\le E_0(1+t)^{-\frac14}$.
Then, for any $\eta>0$, provided $E_0$ is small enough, for $1\le p\le \infty$,
$$
\|k(t)\|_{L^p(\RM)}\lesssim E_0(1+t)^{-\frac{1}{2}(1-1/p)-\frac{1}{2}+\eta}.
$$
\el

For the sake of completeness, we recall the proof of the previous Proposition in Appendix~\ref{cl_proof}.

\subsubsection{A first application}\label{s:pfCorphasethm}
As an immediate application, we may now establish the improved decay bounds \eqref{quadimp}--\eqref{linimp} of
Corollary \ref{phasethm}. We will use these tools again in establishing \eqref{refinedest_comp}.

\begin{proof}[Proof of Corollary \ref{phasethm}]
Bound \eqref{linimp} follows from the assumption $\ks\partial_xh_0=0$. For, a solution $(M_W,k_W)$, with an initial data $(*,0)$, of the decoupled approximating equations \eqref{dqcons} to \eqref{whitham} satisfies $k_W(t)\equiv0$, since the $k$ equation decouples in \eqref{dqcons} for the linearly phase-decoupled case. Comparing to the actual solution of \eqref{whitham} using \eqref{aeq1}, we obtain the result.
Bound \eqref{quadimp} goes similarly, observing that in the quadratically decoupled case, the $k$ equation in the full quadratic approximating
system \eqref{qcons} to \eqref{whitham}, though it does not completely decouple, is of the form \eqref{specialeq} with $F=\cO(M)$.
\end{proof}

\br\label{Liu_explanation}
\textup{
Analogous to \eqref{specialeq} in the quadratically decoupled case,
the rate-determining bound in the linearly decoupled case of
Proposition \ref{cl_lemma} is the key estimate
$$
\|k(t)\|_{L^p}\lesssim E_0(1+t)^{-\frac{1}{2}(1-1/p)-\frac{1}{4}}
$$
established by Liu \cite{L} for quadratic coupling terms involving different modes, thus obeying $k(0)=0$,
$k_t + ak_x + (\gamma k^2)_x- dk_{xx}= (\tilde\theta^2)_x$, where $\tilde\theta(x,t)=\theta(x-\tilde at,\tilde b t)$ with $\tilde a\ne a$ and $\theta$ a self similar solution of a Burgers equation \eqref{burgers}.
The anomalous rate $(1+t)^{\frac14}$ is different from the powers of $(1+t)^{\frac12}$ arising in scalar convection--diffusion
processes, reflecting the additional complications present in the system case.
}
\er

\subsubsection{Quadratic approximants of modulation systems}
For later reference, let us write, in the original frame (and not the co-moving one), as
\be\label{quad_whitham_exp}
\d_t\bp M\\ k\ep\ +\ \d_x\bp dF_{|(\Ms,\ks)}(M,k)\\d\omega_{|(\Ms,\ks)}(M,k)\ep\
+\ \frac12\d_x\bp d^2F_{|(\Ms,\ks)}(M,k)\\d^2\omega_{|(\Ms,\ks)}((M,k),(M,k))\ep\ =\ \tilde B_*\d_x^2\bp M\\k\ep
\ee
the quadratic approximant of \eqref{whitham} (obtained as \eqref{w12_1} and \eqref{linw12_1} above). As pointed out in Remark~\ref{diffrmk}, it follows from Lemma~\ref{dlemg} that this system is independent of the choices made in the course of the formal derivation.

From the general theory, we know that instead of comparing $(\cM,\kappa)$ in Theorem~\ref{main} to a solution $(\cM_W,\kappa_W)$ of \eqref{whitham}, we only need to compare it with $(\Ms,\ks)+(M_W,k_W)$ with $(M_W,k_W)$ a solution of \eqref{quad_whitham_exp} expressed in the co-moving frame.

\subsection{Implicit change of variables}\label{s:implicitchange}
Our nonlinear analysis begins with an implicit nonlinear change of variable \eqref{pertvar}. We explain now how the modulation system is affected by this change of variables. We could have first performed this implicit change of variables then carried out the formal modulation process, but we find more enlightening to change the system \emph{a posteriori}.

Since our diffeomorphism is close to identity, only nonlinear terms should be changed, and from the asymptotic equivalence theory we know that nonlinear terms are relevant only in the hyperbolic part. Therefore it is enough to investigate how (\ref{w1_0},\ref{w2_0}) is altered. Let us introduce $\Phi_0$ such that $\Phi_0(\Psi_0(X,T),T)=X$. Recall that $\d_T\Psi_0=\omega(\cM_0,\d_X\Psi_0)$. Therefore if $A,B$ are such that $\d_T A+\d_X B=0$ then $(\tilde A,\tilde B)(X,T)=(A,B)(\Phi_0(X,T),T)$ implies
$$
\d_T\tilde A\ -\ \frac{\d_T\Phi_0}{\d_X\Phi_0}\ \d_X\tilde A\ +\ \frac{1}{\d_X\Phi_0}\ \d_X\tilde B\ =\ 0
$$
also written $\d_T\left(\d_X\Phi_0\tilde A\right)+\d_X\left(\tilde B-\d_T\Phi_0\tilde A\right)=0$ or
$$
\d_T\left(\d_X\Phi_0\tilde A\right)\ +\ \d_X\left(\tilde B-c\left(\tilde\cM_0,\frac{1}{\d_X\Phi_0}\right)\tilde A\right)\ =\ 0
$$
with $\tilde \cM_0(X,T)=\cM_0(\Phi_0(X,T),T)$. Note that this kind of manipulation is completely similar to the ones needed to perform usual Lagrangian change of coordinates and of course closely related to the computations involved in the proof of Lemma \ref{lem:canest}. As expected, applying this to \eqref{w1_0} leads to a trivial equation while an application on the trivial equation $\d_T(1)+\d_x(0)=0$ gives
$$
\d_T\left(\d_X\Phi_0\right)\ -\ \d_X\left(c\left(\tilde\cM_0,\frac{1}{\d_X\Phi_0}\right)\right)\ =\ 0.
$$
Equation \eqref{w1_0} is changed into
$$
\d_T\left(\d_X\Phi_0\tilde \cM_0\right)\ +\ \d_X\left(F\left(\tilde\cM_0,\frac{1}{\d_X\Phi_0}\right)-c\left(\tilde\cM_0,\frac{1}{\d_X\Phi_0}\right)\tilde \cM_0\right)\ =\ 0.
$$

At the hyperbolic level, we are thus lead to the system
$$
\begin{array}{rcl}
\d_T p&-&\d_X\left(c\left(\frac{\cM}{p},\frac1p\right)\right)\ =\ 0\\
\d_T \cM&+&\d_X\left(F\left(\frac{\cM}{p},\frac1p\right)-c\left(\frac{\cM}{p},\frac1p\right)\frac{\cM}{p}\right)\ =\ 0
\end{array}
$$
whose quadratic expansion in
$$
(p,\cM)\ =\ \left(\frac{1}{\ks},\frac{\Ms}{\ks}\right)+\left(\frac{-k}{\ks}\frac{1}{\ks},\frac{-k}{\ks}\frac{\Ms}{\ks}+\frac{M}{\ks}\right)
$$
gives
$$
\begin{array}{rcl}
\d_T k-\ks\,c(\Ms,\ks)\d_Xk&-&\ks\d_X\left(d\omega_{|(\Ms,\ks)}(M,k)+\frac12d^2\omega_{|(\Ms,\ks)}((M,k),(M,k))\right)\ =\ 0\\
\d_T M-\ks\,c(\Ms,\ks)\d_XM&+&\d_X\left(dF_{|(\Ms,\ks)}(M,k)+\frac12d^2F_{|(\Ms,\ks)}((M,k),(M,k))\right)\\
&+&\ks\d_X\left(\dfrac{k}{\ks}\left(dF_{|(\Ms,\ks)}(M,k)-c(\Ms,\ks)M\right)-dc_{|(\Ms,\ks)}(M,k)M\right)\ =\ 0.
\end{array}
$$
Two main comments are in order: 
1. We end up naturally with equations expressed in a co-moving frame thus no further change is needed. 
2. The wavenumber equation remains unaltered at this level of description. 
This explains why the fact that the implicit change of variables could change the modulation equations was not revealed by previous studies \cite{JNRZ2,SSSU} focusing on situations where no other wave parameter is involved,

\medskip

\br
\textup{
Though we do not need it for the present semilinear analysis, let us describe for the sake of generality what would happen for a full quasilinear parabolic system. For
$$
\d_t\bp\cM\\\Psi_x\ep\ +\ \d_x A(\cM,\Psi_x)\ =\ \d_x\left(D(\cM,\Psi_x)\d_x\bp\cM\\\Psi_x\ep\right)
$$
with
$$
\d_t\Psi\ +\ A_{n+1}(\cM,\Psi_x)\ =\ D_{n+1}(\cM,\Psi_x)\d_x\bp\cM\\\Psi_x\ep
$$
where $A_{n+1}=e_{n+1}\cdot A$, $D_{n+1}=e_{n+1}\cdot D$, the transformation $\Phi=\Psi^{-1}$, $\tilde\cM=\cM\circ\Phi$ leads 
to
$$
\d_t\Phi\ -\ \Phi_x\,A_{n+1}(\tilde\cM,1/\Phi_x)\ =\ -D_{n+1}(\tilde\cM,1/\Phi_x)\d_x\bp\tilde\cM\\1/\Phi_x\ep
$$
with
$$
\begin{array}{rcl}
\d_t\tilde\cM&+&\d_x \left(A_\perp(\tilde\cM,1/\Phi_x)-\d_x\Phi\,A_{n+1}(\tilde\cM,1/\Phi_x)\,\tilde\cM\right)\\
&=&\d_x\left(\frac{1}{\d_x\Phi}\,D_\perp(\tilde\cM,1/\Phi_x)\d_x\bp\tilde\cM\\1/\Phi_x\ep-D_{n+1}(\tilde\cM,1/\Phi_x)\d_x\bp\tilde\cM\\1/\Phi_x\ep\,\tilde\cM\right)\\
\d_t(\d_x\Phi)&-&\d_x\left(\Phi_x\,A_{n+1}(\tilde\cM,1/\Phi_x)\right)\ =\ -\d_x\left(D_{n+1}(\tilde\cM,1/\Phi_x)\d_x\bp\tilde\cM\\1/\Phi_x\ep\right)
\end{array}
$$
where $A_\perp=\bp\Id_{d\times d}&\begin{array}{c}0\\\vdots\\0\end{array}\ep A$, $D_\perp=\bp\Id_{d\times d}&\begin{array}{c}0\\\vdots\\0\end{array}\ep D$.
}
\er

\bigskip

Collecting the results of this appendix, we find that to validate the formal Whitham modulation approximation, 
we only need to compare the couple $(M,\ks\psi_x)$ of Theorem~\ref{main} to a solution $(M_W,k_W)$ of 
$$
\begin{aligned}
\d_t\bp M\\ k\ep&+\ \ks A_*\d_x\bp M\\k\ep\
+\ \frac12\ks\d_x\bp d^2F_{|(\Ms,\ks)}((M,k),(M,k))\\d^2\omega_{|(\Ms,\ks)}((M,k),(M,k))\ep\ -\ \ks^2\tilde B_*\d_x^2\bp M\\k\ep\\
&+\ \ks\d_x\bp\dfrac{k}{\ks}\left(dF_{|(\Ms,\ks)}(M,k)-c(\Ms,\ks)M\right)-dc_{|(\Ms,\ks)}(M,k)M\\0\ep\ =\ 0,
\end{aligned}
$$
where $A_*=\d_{(M,k)} (F-\cs M,-\omega-\cs k)|_{(\Ms,\ks)}$. For writing convenience, we denote this system by
\be\label{quad_whitham}
\d_t\bp M\\ k\ep+\ \ks A_*\d_x\bp M\\k\ep\ -\ \d_x \Big(\frac12 \bp M\\ k\ep^T \Gamma_* \bp M\\ k\ep \Big)
\ =\ \ks^2\tilde B_*\d_x^2\bp M\\k\ep.
\ee
Likewise, $\psi$ in Theorem~\ref{main} needs then to be compared with $\psi_W$ a solution of 
\be\label{last_whitham}
\d_t\psi+\ e_{n+1}\cdot A_*\bp M_W\\k_W\ep\ -\ \frac{1}{\ks}e_{n+1}\cdot\Big(\frac12 \bp M_W\\ k_W\ep^T \Gamma_* \bp M_W\\ k_W\ep \Big)
\ =\ \ks e_{n+1}\cdot \tilde B_*\d_x\bp M_W\\k_W\ep.
\ee

\section{Asymptotic equivalence of quadratic approximants}\label{cl_proof}

For completeness, we include here a proof of Proposition \ref{scalar_lemma}
including the treatment of off-diagonal diffusion terms not
arising in the scalar case considered in \cite{JNRZ1}.

\begin{proof}[Proof of Proposition \ref{scalar_lemma}]
{(\it Case $B_*=\tilde B_*$.)}
We first review the case $B_*=\tilde B_*$
treated in \cite{JNRZ1}.
By the general results of \cite{Ka},
provided $E_0:=\|y_0\|_{L^1\cap H^3(\RM)}$ is sufficiently small, we have
for $1\le p\le\infty$
$$
\begin{aligned}
\|w(t)-w_*\|_{L^p(\RM)},\,
\|y(t)\|_{L^p(\RM)} &\lesssim E_0 (1+t)^{-\frac{1}{2}(1-\frac{1}{p})},\\
\|w_x(t)\|_{H^1(\RM)},\,
\|y_x(t)\|_{H^1(\RM)} &\lesssim E_0 (1+t)^{-\frac34}.
\end{aligned}
$$

Setting $\delta:=w_*+y-w$, we have, subtracting and rearranging,
$$
\delta_t  +A_* \delta_x- B_*\delta_{xx}= \partial_x \CalF,\qquad
\CalF=\cO((|w-w_*|+|y|)\delta)
+\cO(|w-w_*|^3)+\cO(|w-w_*| |w_x|),
$$
with $ \delta|_{t=0}=0$ and $A_*$ and $B_*$ as in \eqref{q}--\eqref{dq}.
By Duhamel's formula,
$$
\delta (t)= \int_0^t \sigma(t-s)\d_x\CalF(s) ds,
$$
where $\sigma$ is the solution operator of the
parabolic system of conservation laws $u_t + A_* u_x-B_* u_{xx}=0$.
Applying the standard bounds \cite{LZ}
$\|\sigma(t)\partial_x^r h\|_{L^p(\RM)}\lesssim
t^{-\frac{1}{2}(\frac1q-\frac{1}{p})-\frac{r}{2}}\|h\|_{L^q(\RM)}$,
$1\le q\le p\le \infty$,
together with
$$
\|\CalF(t)\|_{L^q(\RM)}\ \lesssim\
E_0 (1+t)^{-\frac12(1-1/q)-\frac14}(\|\delta(t)\|_{L^2(\RM)}
+\|w_x(t)\|_{L^2(\RM)})+E_0^2(1+t)^{-\frac12(1-1/q)-1},
$$
$1\le q\le 2,$
we find, defining $\nu(t):=
\sup_{0\le s\le t}\|\delta(s)\|_{L^2(\RM)}(1+s)^{\frac12(1-1/p)+\frac12 -\eta}$,
that, for all $1\le p\le \infty$,
$$
\begin{aligned}
\|\delta(t)\|_{L^p(\RM)}&\lesssim
\int_0^{t/2} (t-s)^{-\frac12(1-1/p)-\frac12}\| \CalF (s)\|_{L^1(\RM)} ds\\
&\quad +
\int_{t/2}^t (t-s)^{-\frac12(1/(\min(2,p))-1/p)-\frac12}
\| \CalF (s)\|_{L^{\min(2,p)}(\RM)} ds \\
&\lesssim
\int_0^{t/2} (t-s)^{-\frac12(1-1/p)-\frac12} (\nu(t)E_0+E_0^2)(1+s)^{-1+\eta}ds\\
&+\int_{t/2}^t (t-s)^{-\frac12(1/(\min(2,p))-1/p)-\frac12}(\nu(t)E_0+E_0^2)(1+s)^{-1+\eta-\frac12(1-1/(\min(2,p)))}ds\\
&\lesssim
E_0(E_0 +\nu(t)) (1+t)^{-\frac12(1-1/p)-\frac12 +\eta},
\end{aligned}
$$
whence $ \nu(t)\le C_\eta E_0\left(E_0+\nu(t)\right) $.
This implies that
$\nu(t)\le 2C_\eta E_0^2$ for $E_0<1/(2C_\eta)$, giving
$$
\|\delta(t)\|_{L^p(\RM)}\leq 2C_\eta E_0^2
(1+t)^{-\frac12(1-1/p)-\frac12 +\eta} ,
\qquad 1\le p\le \infty.
$$

{(\it General case.)}
We treat now the general case that $B(w)=B_*+\cO(w-w_*)$ with $B_*$ constant but not equal to $\tilde B_*$.
Defining again $\delta:=w_*+y-w$, and denoting by $\tilde \sigma(t)$
the solution operator of linear system $u_t + A_* u_x-\tilde B_* u_{xx}=0$,
we have by Duhamel's principle
$$
\begin{array}{rcl}
\delta(t)&=&(\tilde \sigma-\sigma)(t) w_0
+\int_0^t  (\tilde \sigma-\sigma)(t-s) \partial_x \cO(|w-w_*|^2)(s)ds\\[1ex]
&&+\int_0^t  \tilde \sigma(t-s) \partial_x \cO(|\delta|(|w-w_*|+|y|))(s)ds\\[1ex]
&&+\int_0^t \sigma(t-s) \partial_x \cO(|w-w_*|^3+|w-w_*||w_x|)(s)ds.
\end{array}
$$
From \cite{Ka}, provided $E_0:=\|y_0\|_{L^1\cap H^4(\RM)}$ is sufficiently small, we have
for $1\le p\le\infty$
$$
\begin{aligned}
\|w(t)-w_*\|_{L^p(\RM)},\,
\|y(t)\|_{L^p(\RM)} &\lesssim E_0 (1+t)^{-\frac{1}{2}(1-\frac{1}{p})},\\
\|w_x(t)\|_{H^2(\RM)},\,
\|y_x(t)\|_{H^2(\RM)} &\lesssim E_0 (1+t)^{-\frac34}.
\end{aligned}
$$
Applying the bounds \cite{Ka,LZ}
$\|(\tilde \sigma-\sigma)(t)\partial_x^r h\|_{L^p(\RM)}\lesssim
t^{-\frac{1}{2}(\frac1q-\frac{1}{p})-\frac{r}{2}}(1+t)^{-\frac12}\|h\|_{L^q(\RM)}+e^{-\theta\,t}\|\d_x^rh\|_{L^p(\RM)}$,
$1\le q\le p\le \infty$ for $r=0,1$ (and some $\theta>0$),
estimating
$$
\begin{aligned}
\|\int_0^t  (\tilde \sigma&-\sigma)(t-s) \partial_x \cO(|w-w_*|^2)(s)ds\|_{L^p(\RM)}\\
&\lesssim
\int_0^{t/2} (t-s)^{-\frac12(1-1/p)-1}\| |w-w_*|^2 (s)\|_{L^1(\RM)} ds+\int_0^t e^{-\theta\,(t-s)}\| |w-w_*|^2(s)\|_{L^p(\RM)} ds\\
&\qquad\qquad+\int_{t/2}^t (t-s)^{-\frac12}(1+t-s)^{-\frac12}\| |w-w_*|^2(s)\|_{L^p(\RM)} ds \\
&\lesssim
E_0\int_0^{t/2} (t-s)^{-\frac12(1-1/p)-1}(1+s)^{-\frac12} ds
+E_0\int_0^t e^{-\theta\,(t-s)}(1+s)^{-\frac12(1-1/p)-\frac12}  ds\\
&\qquad\qquad+
E_0\int_{t/2}^t (t-s)^{-\frac12}(1+t-s)^{-\frac12}
(1+s)^{-\frac12(1-1/p)-\frac12}  ds\\
&\lesssim E_0(1+t)^{-\frac12(1-1/p)-\frac12}\log(2+t),
\end{aligned}
$$
and other terms either similarly or similarly as in the previous case, we obtain the result.
\end{proof}

\section{Generalizations}\label{s:gen}
We conclude in this appendix by describing briefly 
extensions to more general types of equations arising
in applications, and the modifications in our arguments
that are needed to accomplish this, discussing also, when
possible, the verification of (H1)--(H3) and (D1)--(D3) in 
specific cases.

\subsection{Extensions in type: quasilinear and partially parabolic
systems}\label{s:type}
Our analysis carries over in straightforward fashion
to divergence-form systems of general quasilinear $2r$-parabolic type.
For example, the spectral preparation results of Lemma \ref{jordanlem}, Proposition \ref{whiteig},
and Proposition \ref{linspecg} all go through essentially as
written, depending on no special structure other than divergence form.
From these low-frequency/Bloch number descriptions,
we obtain the same linear bounds on the critical modes $s^{\rm p}$
as described here in the $2$-parabolic semilinear case.
The high-frequency and or high Bloch number analysis also go
through unchanged, the former depending again only on the spectral
preparation results and the latter depending only (through
Pr\"uss' Theorem) on high-frequency resolvent bounds following
from (but not requiring) sectoriality of the linearized operator $L$
about the wave. This completes the linear analysis.

Likewise, by Remark \ref{goodchoice}, we obtain the useful
representation \eqref{veq} of the nonlinear perturbation
equations stated in Lemma \ref{lem:canest}, with
sources $\mathcal{Q}$, $\mathcal{R}$, $\mathcal{S}$ of quadratic
order in $v$, $\psi_x$, $\psi_t$, and a finite number of their derivatives,
which was all that was needed for our nonlinear arguments.
To obtain the nonlinear damping estimate of Proposition \ref{damping},
we note that \eqref{vperteq2} becomes
$$
(1-\psi_x) v_t
+(-1)^r\ks^{2r} \partial_x( B(\tilde U,\dots, \partial_x^{2r-2}\tilde U)\partial_x^{2r-1} v)=
\hbox{\rm lower order terms},
$$
$\tilde U=\bar U+v$.
Thus, taking the $L^2(\RM)$ inner product against
$\sum_{j=0}^K \dfrac{(-1)^{j}\d_x^{2j}v}{1-\psi_x}$,
integrating by parts, and rearranging, we obtain
$
\frac{d}{dt} \|v\|_{H^K(\RM)}^2(t)
 \leq -\tilde{\theta} \|\partial_x^{K+r} v(t)\|_{L^2(\RM)}^2 +
\hbox{\rm lower order terms},
$
similarly as in the second-order semilinear case, leading thereby to
\[
\frac{d}{dt}\|v\|_{H^K(\RM)}^2(t) \leq -\theta \|v(t)\|_{H^K(\RM)}^2 +
C\left( \|v(t)\|_{L^2(\RM)}^2+\|(\psi_t, \psi_x)(t)\|_{H^K(\RM)}^2 \right)
\]
and (by Gronwall's inequality) the result.
See the proof of \cite[Proposition 3.4]{BJNRZ3}, for 
full details in the fourth-order semilinear case.

Combining these ingredients, we obtain, modulo an appropriate
increase in the integer $K$ encoding regularity requirements,
stability, as stated in Theorem \ref{oldmain},
and refined stability, as stated in Proposition \ref{stepg},
yielding the first part
\eqref{refinedest} of description of asymptotic behavior
in Theorem \ref{main}.
By Remark \ref{simplermk}, we get also a partial version
of the second part
\eqref{initial_data}--\eqref{last} of Theorem \ref{main},
but describing comparisons not to the Whitham system,
but only to a second-order hyperbolic-parabolic system
agreeing with the Whitham system in its
linearization about the constant state $(\Ms,\ks)$.
This in turn yields the conclusions of \eqref{linimp},
Corollary \ref{phasethm}, regarding decay with
respect to localized perturbations for linearly phase-decoupled
systems.

Finally, to recover the full result 
\eqref{initial_data}--\eqref{last} of Theorem \ref{main},
comparing to the exact Whitham system, and
thus the sharpened decay rate
\eqref{quadimp} for localized data in the quadratically 
decoupled case,
we have only to observe that performing the same computations
as in Appendix \ref{algebraic} (differentiating the traveling-wave
ODE), 
and in the proof of 
Lemma \ref{sharplemmag} (pulling out quadratic order parts of
nonlinear term $\mathcal{N}$)
while carrying along the additional higher-order terms arising in the
general case, we obtain a higher-order analog of 
Lemma \ref{calcsg}, expressing the resulting quadratic coupling
constants (means) in terms of derivatives of 
first-order terms arising in the Whitham system, 
after which computations go as before to yield the result;
see the proof of Theorem \ref{main}, Section \ref{s:noncom}.

This completes the treatment of the quasilinear $2r$-parabolic case.
Reviewing the above discussion, 
but omitting algebraic considerations on which we focus in the next section,
we find that the two ingredients needed to treat more general
divergence-form systems are the nonlinear damping estimate
used to control higher-derivative by lower-derivative norms,
and the high-frequency linearized resolvent bounds
used to apply Pr\"uss' Theorem.
For, these were the only two places where we used the parabolic
form of the equations; the rest of the argument was completely general,
Moreover, the second, linearized, estimate can typically be obtained
by a linearized version of the same energy estimate that is used
to obtain the first, damping-type estimate.
This allows us, in particular, to treat (partially parabolic)
symmetric hyperbolic--parabolic equations such as arise in continuum
mechanics, using ``Kawashima-type'' energy estimates as
described in \cite{Ka}, and variants thereof. 
See, for example, 
Proposition 4.4 (proved in Appendix A) and Lemma B.1 
in \cite{JZN}.

\br\label{damprmk}
\textup{
The strategy of using a common energy estimate to get,
simultaneously, 
damping high-frequency resolvent, and high-frequency decay estimates,
with derivative gains in the first compensating for derivative losses
in the third, originates in the study of viscous shock stability;
see \cite[Section 4.2.1]{Z6}.
For simpler, and somewhat sharpened, versions in this context,
see \cite{KZ,NZ}.
}
\er

\subsection{Extensions in form: an abstract continuum of models}\label{s:cont}
Still more generally, we may treat the full class of systems
\be\label{fullclass}
u_t+f(u)_x=g(u)+(B^1(u)u_x)_x
+(B^2(u,u_x)u_{xx})_x + \dots \, ,
\ee
$u,f, g\in \RM^n$, $B_j\in \RM^{n\times n}$,
with $u=\bp u_1\\u_2\ep$, $f=\bp f_1\\f_2\ep$,
$g=\bp 0\\g_2\ep$, $B^j=\bp B^j_{11}& B^j_{12}\\B^j_{21}& B^j_{22}\ep$,
$u_2\in \RM^r$,
including both divergence- and nondivergence-type equations.
Note that this includes both reaction diffusion and conservation
law cases as limits $f\equiv 0$ and $g\equiv 0$,
but also many cases in between:
for example, the viscous relaxation case $n=2$, $r=1$
occurring for the Saint-Venant equations \eqref{sveq},
or the case $n=3$, $r=1$ occurring for the B\'enard--Marangoni
model \eqref{BMmodel} below.

For such models, integrating the conservative $u_1$ equation
in the traveling-wave ODE, and writing as an $N\times N$ first-order system,
we obtain from the requirement of periodicity $N$ constraints, while
we have $N+n-r+2$ degrees of freedom consisting of the initial
condition $u(0)$, the wave number $k\in \RM$, the speed $c\in \RM$, 
and the constant of integration
$q_1\in \RM^{n-r}$ arising from integration of the $u_1$ equation;
thus, we expect generically a manifold of periodic solutions of
dimension $n-r+2$.
In the reaction-diffusion case $r=n$, this returns the familiar value $2$,
or, up to translation, a one-dimensional family (generically) 
indexed by wave number $k$.
In the conservation law case $r=0$, it returns the value $n+2$, leading,
up to translation, to an $(n+1)$-dimensional family 
as in hypothesis (H2) of the introduction.

Substituting this value $n-r+1$ in hypotheses (H2) and (D3), therefore,
we readily obtain by the same derivation as for \eqref{whitham}
a modified Whitham system consisting of the
$(n-r+1)\times (n-r+1)$ system of viscous conservation laws
\ba\label{whithamcont}
\cM_t+\ks(F-\cs \cM)_x &=\ks^2(d_{11} \cM_x + d_{12}\kappa_x)_x,\\
\kappa_t+\ks(-\omega-\cs\kappa)_x&=\ks^2(d_{21}\cM_x
+ d_{22}\kappa_x)_x,\\
\ea
where $\omega(\cM,\kappa)=-\kappa c(\cM,\kappa)$ denotes time frequency,
$\cM:= \int_0^1 U_1^{\cM,\kappa}(x) dx$ and
$$
F(\cM,\kappa):=\int_0^1
\big( f_1(U^{\cM,\kappa}(x)) 
-\sum_j (B^j_{11},B^j_{12})( U^{\cM,\kappa, \dots }(x)) 
(U^{\cM,\kappa})'(x)) 
\big) \, dx
$$
denote mean and mean ``total flux'' in the $u_1$ coordinate,
and $d_{ij}(\cM,\kappa)$ are determined by higher-order corrections.
(Note that, for $B^j\equiv \const$, the terms involving $B^j$ are 
perfect derivatives, so disappear; this explains the fact that they
were not present in the discussion of the second-order semilinear case.)

Likewise, we obtain in straightforward fashion analogs of 
the spectral preparation results of Lemma \ref{jordanlem}, 
Proposition \ref{whiteig}, and Proposition \ref{linspecg},
thus yielding 
corresponding linear bounds on critical modes $s^{\rm p}$.
Note that the slow decay rates that may arise at the linear level from a possible 
Jordan block will still be compensated by the special structure of the nonlinear terms, coming now
in the form
$$
\cN\ =\ \d_t\cN_0+\d_x\cN_1\ +\ \bp0_{(n-r)\times(n-r)}&0_{(n-r)\times r}\\0_{r\times(n-r)}&\Id_{r\times r}\\\ep\cN_2.
$$
See for example \cite{NR1} for a careful derivation of the 
second-order derivative
Whitham system up to linear and quadratic order in first-order derivative
terms, and \cite{JZN} for a proof of the needed
spectral preparation results 
in the Saint-Venant case \eqref{sveq}.
%
Indeed, so long as the nonlinear structure of the equations permits
a nonlinear damping estimate
as in Proposition \ref{damping},
and high-frequency linearized resolvent estimates as needed to
apply Pr\"uss' Theorem in estimating high-frequency linearized
behavior as in 
the proof of \eqref{undiffeqg} and \eqref{finalgg}) above,
we obtain again 
(modulo increase in the exponent of regularity $K$) 
the stability results of Theorem \ref{oldmain} and Proposition \ref{stepg},
and a partial version of Theorem \ref{main}
describing comparisons to a second-order hyperbolic-parabolic system
agreeing with \eqref{whithamcont} in its
linearization about the constant state $(\Ms,\ks).$,
yielding again the result \eqref{linimp},
of Corollary \ref{phasethm} asserting decay with
respect to localized perturbations for linearly phase-decoupled
systems.

{\it That is,
we obtain in this case
exactly the conclusions cited in the examples of the introduction,
obtained by examination of the 
linearization of the first-order part of the Whitham equations.}

To recover the full result \eqref{initial_data}--\eqref{last} 
of Theorem \ref{main} showing convergence to the exact Whitham system, 
one also needs an analog of Lemma \ref{calcsg}. But, the only difference between the 
(formal) computations of the derivation in Subsection~\ref{secondway} 
and the ones of Lemma \ref{calcsg} is that the former are carried out 
before the implicit change of variables, while the latter are carried out after. 
Thus, analogs of Lemma \ref{calcsg} essentially follow by
commutation of an implicit change of variables and expansions to a desired order.

\subsection{Verification of (H1)--(H3), (D1)--(D3)}\label{s:Dver}
Regarding verification of our stability hypotheses, 
we recall that, assuming the trivial regularity hypothesis (H1), 
hypothesis (H2) is implied by
(D1)--(D3), by Lemma \ref{DHlem}, 
while (H3) by Proposition \ref{whiteig}
can generally be verified by the same spectral expansion
process needed to verify (D2).
Meanwhile, (D1)--(D3) can be verified numerically
by Galerkin approximation or numerical Evans function computation
(see, e.g., \cite{FST,BJNRZ3,BJNRZ4}), or,
in some cases, analytically, using bifurcation theory 
(see, e.g., \cite{S2}) or
singular perturbations (see, e.g., \cite{JNRZ3}).
For general discussion, see \cite{BJNRZ1,JZN,BJNRZ4}.

\subsection{Applications revisited}\label{s:apps}
We now discuss previous examples and some new ones in a bit more depth.

\subsubsection{The Korteweg-de Vries/Kuramoto-Sivashinsky equation}
A more canonical form of \eqref{kseq} is
$$
u_t+\gamma \partial_x^4u+\eps \partial_x^3 u +
\delta \partial_x^2u+\partial_x f(u) =0,
\quad
\gamma,\delta >0,
$$
modeling phenomena from plasma and flame-front instabilities to inclined
thin-film flow \cite{KT,S,PSU,NR1}.
As a fourth-order parabolic equation, this fits the framework of
Section \ref{s:type}, so that all of the results of this paper apply.
Spectral stability has been studied in detail in \cite{CDK,BJNRZ3},
indicating the existence of both spectrally stable and unstable waves;
in particular, (H1)--(H3) and (D1)--(D3) have been shown in \cite{BJNRZ3}
to hold for a wide variety of waves.
We note that stability under these hypotheses has been proven
for localized perturbations in \cite{BJNRZ3}; the new observations
here are asymptotic behavior, and decay for nonlocalized
perturbations.

\subsubsection{The Saint-Venant equations}
Recall, in Lagrangian coordinates, the Saint-Venant equations 
\ba \label{e:stvenant}
\tau_t - u_x&= 0,\\
u_t+ ((2F)^{-1}\tau^{-2})_x&=
1- \tau u^2 +\nu (\tau^{-2}u_x)_x ,
\ea
where $\tau:=h^{-1}$, $h$ is fluid height,
$u$ is fluid velocity, and $x$ is a Lagrangian marker.
These are not parabolic, yet nonlinear damping and high-frequency
resolvent estimates can still be carried out, yielding by the
discussion of Section \ref{s:cont} all of the results of this paper.
Specifically, nonlinear damping is established in
\cite[Proposition 4.4]{JZN} (proved in Appendix A of the reference),
under the ``slope condition'' $\nu \bar u_x<F^{-1}$, 
where $\bar U=(\bar \tau, \bar u)$, a technical condition that
appears to hold in most cases of interest, but which we expect
can be dropped.\footnote{
Linearized analysis suggests that 
the sharp condition is, rather, 
some averaged version of this one, which holds trivially by the fact that
perfect derivatives have zero mean \cite{BJNRZ4}.}
Again, the new observation here is asymptotic behavior, and also stability
under nonlocalized perturbations, stability under localized data having
been established in \cite{JZN}.
As noted earlier, \eqref{e:stvenant} is a balance law rather than
a conservation law, with nonconservative source term $g$.

\subsubsection{The capillary Saint-Venant equations}
With capillary pressure effects, \eqref{e:stvenant} becomes
\ba 
\tau_t - u_x&= 0,\\
u_t+ ((2F)^{-1}\tau^{-2})_x&=
1- \tau u^2 +\nu (\tau^{-2}u_x)_x 
-\sigma(\tau^{-5}\tau_{xx}-\frac{5}{2}\ \tau^{-6}(\tau_x)^2)_x
%
\nonumber
\ea
where 
$\sigma>0$
is the coefficient of capillarity.
These equations can be reduced by Kotschote's \cite{Ko}
method of auxiliary variables
(introducing $z:=\tau_x$) to a $3\times 3$ second-order quasilinear
parabolic system
\ba 
\tau_t - u_x+z_x&= \tau_{xx},\\
z_t &= u_{xx},\\
u_t+ ((2F)^{-1}\tau^{-2})_x&=
1- \tau u^2 +\nu (\tau^{-2}u_x)_x 
-\sigma(\tau^{-5}z_x-\frac{5}{2}\ \tau^{-6}z^2)_x
%
\nonumber
\ea
to which standard techniques can be applied \cite{Y,Z4}.
This fits the framework of Section \ref{s:type},
yielding all of the results of this paper, the only change being in the regularity assumptions on data,
which must be incremented by one to accommodate the new variable $z=\tau_x$.
Existence and spectral stability or instability 
of these waves is a topic of ongoing investigation \cite{BJNR}.

\subsubsection{B\'enard--Marangoni flow}\label{s:BM}
A qualitative model introduced in 
\cite{HSZ} 
for B\'enard--Marangoni flow,
or flow driven by temperature-induced surface tension variation,
is
\ba\label{BMmodel}
u_t &= -(1+u_{xx})_{xx} +
\eps^2 u + f(u,v_x, w_x),\\
v_t&= v_{xx}+v_x + g_1(u,v,w)_x,\\
w_t&= w_{xx} -w_x + g_2(u,v,w)_x,\\
\ea
with
$ f(u,v_x,w_x)=
-u^3 +\gamma (uv_x+ uw_x)$,
$g_1(u,v,w)=-uv$,
$g_2(u,v,w)=-uw. $
Though of mixed fourth-order parabolic/second-order parabolic form,
it is readily seen that these equations are both sectorial and admit
a nonlinear damping estimate; moreover, they are of the mixed
conservative/nonconservative form \eqref{fullclass}.
Thus, by the discussion of Sections \ref{s:type} and \ref{s:cont},
the main results of this paper apply, giving stability and behavior
in terms of a $3\times 3$ hyperbolic--parabolic system agreeing with the
Whitham system \eqref{whithamcont}.

Let us now discuss existence, the form of the Whitham equations, 
and validation of (D1)--(D3).
Setting $v\equiv w\equiv 0$, we find that the equations reduce
to the Swift--Hohenberg equation \eqref{sheq} for $u$, with
bifurcation parameter $r=\eps^2$ restricted
to the positive side of the bifurcation point $r=0$
at which periodic solutions appear.
Thus, we inherit from the Swift--Hohenberg equations a special
class of periodic solutions with $(v,w)$ vanishing.
Up to translation, such solutions are given by the
$2$-parameter family of zero-speed
$\frac{ 2\pi}{1+\eps\omega}$-periodic Swift--Hohenberg solutions
\be\label{SHwaves}
\bar U^{\omega,\eps}(x)= 
\frac{2 \eps(\sqrt{1-4\omega^2}}{\sqrt{3}}\cos((1+\eps\omega)x)
+\cO(\eps^2),
\quad
(\bar V^{\omega,\eps},\bar W^{\omega,\eps})(x)\equiv (0,0),
\ee
where $\eps$, recall, is the bifurcation parameter, a fixed constant
in \eqref{BMmodel}.
However, there are many other solutions for which $(\bar V,\bar W)
\not \equiv (0,0)$, yielding an additional two parameters in the
description of nearby periodic traveling waves.
Moreover, though the Swift--Hohenberg solutions are zero speed
as a result of reflection symmetry (see Remark \ref{reflectrmk}),
reflection symmetry of \eqref{BMmodel} is broken as soon as
$(v,w)\not \equiv (0,0)$, and so in general these waves may have
arbitrary speed.
It is our expectation, therefore, that the Whitham system 
is not phase-decoupled even about such special waves.

Numerical experiment by Galerkin approximation in 
\cite{Zi} indicate that solutions \eqref{SHwaves} satisfy 
stability conditions (D1)--(D3) for $\omega=0$ and $\eps>0$ 
in a moderate range. 
Here, we demonstrate the same conclusion for $|\omega|< 1/2\sqrt{3}$
and  $\eps<<1$,
using decoupling of the equations and known analytical results for 
the Swift--Hohenberg equation,
at the same time obtaining the limiting $\eps\to 0$ coefficients of 
the linearized Whitham system about $(\Ms,\ks)=(0,\ks)$.
It would be interesting to carry out a systematic numerical stability
investigation as in \cite{BJNRZ2,BJNRZ3} on the entire parameter range, 
and in particular to determine phase-coupling or decoupling of
the associated Whitham system. 

\begin{proof}[Proof of (D1)--(D3)]
About the special solutions \eqref{SHwaves}, 
the linearized eigenvalue equations are
\ba\label{BMeig}
\lambda u&= L^0u + Mv + Nw,\\
\lambda v&= L^+ v,\\
\lambda w&= L^-w,\\
\ea
where $L_0$ is the linearized operator of the Swift--Hohenberg equation
about $\bar U$ and
$$
L^\pm:= \partial_x^2 \pm \partial_x -\partial_x \bar U^{\omega,\eps}.
$$

By upper triangular form of \eqref{BMeig}, the eigenvalues
of $L_\xi$, counted by algebraic multiplicity,
consist of the union of the eigenvalues of $L^0_\xi$ and $L^\pm_\xi$.
Let us first consider the eigenvalues of the Swift--Hohenberg
operator $L^0_\xi$. 
In
\cite{Eck,CE1,CE2} (see also \cite{M,S2}) 
it was analytically verified\footnote{It has also been shown numerically that there exist 
bands of stable periodic Swift--Hohenberg solutions 
in the parameter space $(\omega, \kappa,\eps)$ \cite{M,BJNRZ2},
for $|\eps|$ not necessarily small.} 
that for $\eps<<1$,
solutions $\bar U^{\omega,\eps}$ in \eqref{SHwaves}
are spectrally stable for
$$
\left|4\omega^2\right|<\frac{1}{3}+\mathcal{O}(\eps)
$$
%
(in particular, for $\omega=0$).
From the fact that the waves are of speed $c\equiv 0$, we find that
the characteristic speed of the associated scalar Whitham equation is
$a^0\equiv 0$, and the associated critical mode has expansion
$\lambda^0(\xi)= -d^0 \xi^2$.
Turning to the operators $L^\pm$, and noting that 
$\bar U^{\omega,\eps}\to 0$ uniformly in all derivatives as $\eps\to 0$,
we find that as $\eps\to 0$ their eigenvalues approach uniformly the
eigenvalues of the limiting constant-coefficient operators
$$
\bar L^\pm_\xi:= (\partial_x+i\xi)^2 \pm (\partial_x+i\xi), 
$$
which, by direct (discrete Fourier transform) computation, are
$$
\bar \lambda^\pm (\xi)= -(j+\xi)^2 \pm i(j+\xi),
$$
where the Fourier frequency $j$ runs through the integers.
By continuity, these are therefore spectrally stable for $|\eps|<<1$,
with approximate critical mode expansions (obtained at $j=0$)
of $\pm i \xi - \xi^2$. Combining these facts, we
find that the limiting linearized Whitham system has characteristic
speeds $a_j=0,\pm 1$, with corresponding (diagonal) viscosity coefficients
$d$, $1$, $1$.
This verifies (D1)--(D3) and (by distinctness of $a_j$) (H3) for
$|\eps|$ sufficiently small, yielding spectral stability by the
discussion of Section \ref{s:Dver}
\end{proof}

\subsubsection{Inclined Marangoni flow}\label{s:iM}
The related inclined thin-film equation
\be\label{iMeq}
H_t + (H^2-H^3)_x=-(H^3H_{xxx})_x
\ee
models Marangoni flow driven by a thermal gradient up an inclined silicon 
wafer, where $H$ denotes fluid height \cite{BMFC,BMSZ}.
As a cousin of the Kuramoto--Sivashinsky equation, it would be
interesting to investigate whether this model too supports stable
periodic traveling-waves solutions.

\subsubsection{Surfactant-driven Marangoni flow}\label{s:SM}
Finally, we mention the surfactant model \cite{MT}
\ba\label{surf}
H_t + \frac12(H^2 \sigma'(\Gamma) \Gamma_x)_x&=0,\\
\partial_t \Gamma + \partial_x(\Gamma H\sigma'(\Gamma) \partial_x \Gamma)&=
{\rm Pe}_{\rm s}^{-1} \partial_x^4 \Gamma,
\ea
modeling flow in a thin horizontal film driven by surfactant induced
gradients in surface tension,
where $H$ is fluid height and $\Gamma$ surface surfactant concentration,
and $ {\rm Pe}_{\rm s}$ is the modified Peclet number, a dimensionless
constant, and $\sigma(\Gamma)=1-\Gamma$ is an equation of state
encoding the dependence of surface tension on surfactant density.
Like \eqref{iMeq}, this appears to be an interesting example for 
study by the methods developed here and in \cite{JZ2,JZ3,JZN,BJNRZ3}.
Note, as the second equation is conservative, 
that the associated Whitham approximation is indeed of system form.

\end{document}